\documentclass[reqno,11pt]{amsart}

\usepackage{fullpage}


\usepackage{amssymb}
\usepackage{dsfont}

\usepackage[utf8]{inputenc} 
\usepackage[T1]{fontenc} 

\usepackage{hyperref}

\usepackage[nobysame,alphabetic,initials,msc-links]{amsrefs}

\DefineSimpleKey{bib}{how}

\renewcommand{\eprint}[1]{#1}
\BibSpec{misc}{%
  +{}{\PrintAuthors}  {author}
  +{,}{ \textit}      {title}
  +{,}{ }             {how}
  +{}{ \parenthesize} {date}
  +{,} { available at \eprint}        {eprint}
  +{,}{ available at \url}{url}
  +{,}{ }             {note}
  +{.}{}              {transition}
}

\numberwithin{equation}{section}

\theoremstyle{plain}
\newtheorem{thm}{Theorem}[section]
\newtheorem{prop}[thm]{Proposition}
\newtheorem{lemma}[thm]{Lemma}
\newtheorem{cor}[thm]{Corollary}
\theoremstyle{definition}

\newtheorem{defn}[thm]{Definition}
\theoremstyle{remark}

\newtheorem{remark}[thm]{Remark}
\newtheorem{example}[thm]{Example}

\newcommand\bp{\begin{proof}}
\newcommand\ep{\end{proof}}

\newcommand{\un}{\mathds{1}}

\newcommand\C{\mathbb{C}}
\newcommand\Z{\mathbb{Z}}

\newcommand{\A}{{\mathcal{A}}}
\newcommand{\G}{{\mathcal{G}}}
\newcommand{\TT}{{\mathcal{T}}}
\newcommand{\V}{{\mathcal{V}}}

\newcommand{\CcG}{{C_{c}(\mathcal{G})}}

\newcommand{\Gu}{{\mathcal{G}^{(0)}}}
\newcommand{\Gxx}{\mathcal{G}^x_x}

\newcommand\Ind{\operatorname{Ind}}
\newcommand\Mat{\operatorname{Mat}}
\newcommand\supp{\operatorname{supp}}

\newcommand\eps{\varepsilon}

\begin{document}

\title{(Non)exotic completions of the group algebras of isotropy groups}

\date{December 21, 2020; minor changes April 13, 2021}

\author{Johannes Christensen$^{1}$}
\thanks{$^{1}$KU Leuven, Department of Mathematics (Belgium).  E-mail:  johannes.christensen@kuleuven.be}

\author{Sergey Neshveyev$^{2}$}
\thanks{$^{2}$University of Oslo, Mathematics institute (Norway).  E-mail:  sergeyn@math.uio.no}

\thanks{J.C. is supported by a DFF-International Postdoctoral Grant and partially by the DFF project no. 7014-00145B. S.N. is partially supported by the NFR funded project 300837 ``Quantum Symmetry''.}

\begin{abstract}
Motivated by the problem of characterizing KMS states on the reduced C$^*$-algebras of \'etale groupoids, we show that the reduced norm on these algebras induces a C$^*$-norm on the group algebras of the isotropy groups. This C$^*$-norm coincides with the reduced norm for the transformation groupoids, but, as follows from examples of Higson--Lafforgue--Skandalis, it can be exotic already for groupoids of germs associated with group actions. We show that the norm is still the reduced one for some classes of graded groupoids, in particular, for the groupoids associated with partial actions of groups and the semidirect products of exact groups and groupoids with amenable isotropy groups.
\end{abstract}

\maketitle

\section*{Introduction}

Over the last fifty years there has been a significant interest in describing KMS states for different C$^{*}$-dynamical systems. By now there are several strategies how to approach this problem in concrete examples. One such strategy, which has proven to be extremely useful, is to represent a given system as a C$^*$-algebra of a locally compact \'etale groupoid $\G$ with the time evolution defined by a continuous real-valued $1$-cocycle $c$ on the groupoid. By a result of Renault~\cite{Rbook}, if the groupoid is Hausdorff and principal, then all KMS$_\beta$ states on $C^*(\G)$ arise by integration with respect to quasi-invariant probability measures $\mu$ on $\Gu$ with Radon-Nikodym cocycle $e^{-\beta c}$, and they all factor through the reduced groupoid C$^{*}$-algebra $C^*_r(\G)$. The situation for non-principal groupoids is more complicated.  As was shown by the second author~\cite{N}, in this case the KMS states are classified by pairs $(\mu,\{\varphi_x\}_x)$ consisting of a quasi-invariant measure $\mu$ and a measurable field of tracial states $\varphi_x$ on the C$^*$-algebras $C^*(\Gxx)$ of the isotropy groups satisfying certain conditions.

The present paper is motivated by the natural question of how to describe the KMS states on the reduced groupoid C$^*$-algebras $C^*_r(\G)$. In other words, the question is under which conditions on $(\mu,\{\varphi_x\}_x)$ the corresponding KMS state on $C^*(\G)$ factors through $C^*_r(\G)$. A sufficient condition, which follows immediately from the construction, is that $\mu$-almost all traces $\varphi_x$ factor through~$C^*_r(\Gxx)$. But this condition is not in general necessary, since by an example of Willett~\cite{W} based on the HLS groupoids~\cite{HLS} there exist groupoids $\G$ such that $C^*(\G)=C^*_r(\G)$, yet $\G$ has a nonamenable isotropy group; see the related Example~\ref{ex:28a} below. In order to obtain a necessary and sufficient condition we introduce a new, in general exotic, C$^{*}$-norm $\lVert\cdot\rVert_e$ on the group algebras of the isotropy groups. Denoting by $C^*_e(\Gxx)$ the corresponding completions, we prove then that the state defined by $(\mu,\{\varphi_x\}_x)$ factors through $C^*_r(\G)$ if and only if $\mu$-almost all traces $\varphi_x$ factor through $C^*_e(\Gxx)$, see  Proposition~\ref{prop32}.

Although our initial motivation was to characterize KMS states on $C^*_r(\G)$, in the end the main focus of the paper is the norm $\lVert\cdot\rVert_e$ itself. In particular, we provide several sufficient conditions on the groupoid that ensure that this norm is equal to the reduced norm. It is easy to show that this is always the case for the transformation groupoids, see Proposition \ref{prop:transformation-groupoids}. We show that this is also often the case for graded groupoids, see Theorems \ref{thm:e=r} and~\ref{tmred} for the precise statements.

\smallskip

The paper consists of four sections. In Section \ref{section2} we recall basic properties of locally compact \'etale groupoids and their associated C$^*$-algebras and we prove a few auxiliary results. In Section \ref{section3} we define the C$^{*}$-norm $\lVert \cdot \rVert_{e}$ on the group algebras $\mathbb{C}\Gxx$ of the isotropy groups of a locally compact \'etale groupoid~$\mathcal{G}$. We prove that the norm $\lVert \cdot \rVert_{e}$ agrees with the reduced norm for transformation groupoids and, using~\citelist{\cite{HLS}\cite{W}}, we provide an example of a groupoid of germs where it is a genuine exotic norm. In Section \ref{section4} we prove that this norm completely governs which states on~$C^{*}(\G)$ with $C_{0}(\Gu)$ in their centralizers factor through $C_{r}^{*}(\G)$, and we give a partial extension of this result to weights. This covers the KMS states on the reduced groupoid C$^{*}$-algebras for the time evolutions defined by continuous real-valued $1$-cocycles. In Section~\ref{section5} we give two sufficient conditions for the exotic norm to agree with the reduced norm. A common assumption in both cases is the existence of a grading of the groupoid, so to illustrate the relevance of our results, we begin Section \ref{section5} by discussing several examples of graded groupoids. We end the paper by showing that the property $\lVert\cdot\rVert_e=\lVert\cdot\rVert_r$ is invariant under Morita equivalence.

As a last remark, let us comment on our assumptions on the groupoids. Since the groupoids of germs and the groupoids associated with semigroups are increasingly popular and can easily be non-Hausdorff, we work with not necessarily Hausdorff locally compact \'etale groupoids throughout the paper, see Section~\ref{section2} for the precise setup. This requires some extra care at a few places, but no fundamental changes compared to the Hausdorff case. Apart from Section~\ref{section4}, we do not assume that our groupoids are second countable either.

\smallskip\noindent
\emph{Acknowledgement}: We thank the referee for carefully reading the manuscript and suggesting to look at Morita equivalent groupoids.

\section{Preliminaries} \label{section2}

For a groupoid $\G$, we denote by $\Gu \subset \G$ its unit space and by $r,s\colon \G \to \Gu $ the range map and the source map, respectively, so that $r(g)=gg^{-1}$ and $s(g)=g^{-1}g$. For $x,y\in \Gu $, we set $\G_{x}:=s^{-1}(x)$, $\G^{x}:=r^{-1}(x)$ and $\G^y_x:=\G^y\cap\G_x$. In particular, $\G_{x}^{x}$ denotes the isotropy group at $x$.

\smallskip

We will be working with locally compact, but not necessarily Hausdorff, \'{e}tale groupoids, by which we mean groupoids $\G$ endowed with a topology such that:
\begin{enumerate}
\item[-] the groupoid operations are continuous;
\item[-] the unit space $\Gu $ is a locally compact Hausdorff space in the relative topology;
\item[-] the map $r$ is a local homeomorphism.
\end{enumerate}
These assumptions imply that the map $s$ is a local homeomorphism as well, the sets $\G^x$ and $\G_x$ are discrete (in particular, Hausdorff), and every point of $\G$ has a compact Hausdorff neighbourhood. It is known and not difficult to see that they also imply that $\Gu $ is open in $\G$.

If $W\subset \G$ is open and $r|_{W}\colon W\to r(W)$ and $s|_{W}\colon W\to s(W)$ are homeomorphisms onto the open sets $r(W)$ and $s(W)$, then $W$ is called an (open) bisection. Given two subsets $U$ and $V$ of $\G$, we denote by $UV$ the set of pairwise products. If $U$ and $V$ are bisections, then so is $UV$.

Recall how to construct the full and reduced groupoid C$^{*}$-algebras of $\G$. For an open Hausdorff subset $U\subset\G$, consider the space $C_c(U)$ of continuous compactly supported functions on $U$. We can consider every $f\in C_c(U)$ as a function on $\G$ by continuing $f$ by zero. Define the function space $C_c(\G)$ as the sum of the spaces $C_c(U)$ for all~$U$. Note that the functions in $C_c(\G)$ are not necessarily continuous. Later we will need the following simple lemma, proved using a partition of unity argument.

\begin{lemma}[\cite{KhSk}*{Lemma~1.3}]\label{lem:span}
If $(U_i)_{i\in I}$ is a covering of $\G$ by open Hausdorff sets, then $C_c(\G)$ is the sum of the spaces $C_c(U_i)$, $i\in I$.
\end{lemma}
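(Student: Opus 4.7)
The plan is to reduce to showing that an arbitrary $f\in C_c(U)$, where $U\subset\G$ is an open Hausdorff subset, can be written as a finite sum $\sum_j f_j$ with $f_j\in C_c(U_{i_j})$ for some indices $i_j$. Once this is done, linearity and the definition of $C_c(\G)$ as the sum of the spaces $C_c(U)$ over all open Hausdorff $U$ give the claim.

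Fix such an $f$ and set $K:=\supp f\subset U$. Because $U$ is open in $\G$ and Hausdorff, and every point of $\G$ admits a compact Hausdorff neighbourhood, the space $U$ is itself locally compact Hausdorff: for $x\in U$, pick a compact Hausdorff neighbourhood $L$ of $x$ in $\G$ and, working inside the compact Hausdorff space $L$, find an open neighbourhood $W$ of $x$ with $L$-closure contained in $U\cap L^{\circ}$; this closure is a compact neighbourhood of $x$ in $U$. Consequently $\{U_i\cap U\}_{i\in I}$ is an open cover of the compact set $K$ in the LCH space $U$, and we can extract a finite subcover $U_{i_1}\cap U,\ldots,U_{i_n}\cap U$ together with a subordinate partition of unity $\chi_1,\ldots,\chi_n\in C_c(U)$ with $\supp\chi_j\subset U_{i_j}\cap U$ and $\sum_j\chi_j=1$ on $K$. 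Then $f=\sum_j\chi_j f$, and each $\chi_j f$ is a continuous compactly supported function on $U$ whose support is contained in $U_{i_j}\cap U$.

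The step where one has to be careful, and which I view as the only real obstacle, is verifying that the extension-by-zero of $\chi_j f$ sits in $C_c(U_{i_j})$, despite $\G$ being non-Hausdorff. For this, let $g:=\chi_j f$ and view it as a function on $U_{i_j}$, taking the value $0$ on $U_{i_j}\setminus(U\cap U_{i_j})$. On the open set $U\cap U_{i_j}$, the function $g$ agrees with the continuous function $\chi_j f\in C_c(U)$. Its support, being compact in $U$, is also a compact subset of the Hausdorff space $U_{i_j}$, and hence is closed in $U_{i_j}$; thus $g$ vanishes on the open complement of this closed set, proving continuity of $g$ on $U_{i_j}$ at every point outside the support as well. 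Therefore $g\in C_c(U_{i_j})$, which finishes the decomposition $f=\sum_j \chi_j f\in\sum_{j}C_c(U_{i_j})$.
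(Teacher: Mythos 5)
Your proof is correct and takes essentially the approach the paper intends: the paper gives no argument of its own but cites \cite{KhSk}*{Lemma~1.3} and notes the lemma is proved ``using a partition of unity argument'', which is exactly what you carry out (reduce to a single $f\in C_c(U)$, cover $\supp f$ by finitely many $U_{i_j}\cap U$, take a subordinate partition of unity in the locally compact Hausdorff space $U$). Your careful verification that each zero-extension $\chi_j f$ is continuous on $U_{i_j}$ --- via compactness of its support and closedness of compact sets in the Hausdorff set $U_{i_j}$ --- is precisely the point where the non-Hausdorffness of $\G$ must be addressed, and you handle it correctly.
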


We make $\CcG$ into a $*$-algebra by defining the convolution product $f_{1} * f_{2}$ of two functions $f_{1}, f_{2} \in C_{c}(\G)$ via the formula
\begin{equation*} \label{eprod}
(f_{1}*f_{2})(g) := \sum_{h \in \G^{r(g)}} f_{1}(h) f_{2}(h^{-1}g)\quad \text{for}\quad g\in \G,
\end{equation*}
and the involution by $f^{*}(g):=\overline{f(g^{-1})}$. We define a norm $\lVert \cdot \rVert $ on $\CcG$ by setting
\begin{equation*}
\lVert f \rVert := \sup_\rho \lVert \rho(f) \rVert,
\end{equation*}
where the supremum is taken over all representations of $C_c(\G)$ by bounded operators on Hilbert spaces. Completing $\CcG$ with this norm we obtain the \emph{full groupoid $C^{*}$-algebra} $C^{*}(\G)$ of $\G$.

To introduce the reduced norm on $C_{c}(\G)$, for every point $x\in \Gu $ define a representation $\rho_{x}\colon C_{c}(\G) \to B(\ell^{2}(\G_{x}))$ by
\begin{equation}\label{eq:rhox}
\rho_{x}(f) \delta_{g} :=\sum_{h \in \G_{r(g)}} f(h) \delta_{hg},
\end{equation}
where $\delta_g$ is the Dirac delta-function. The reduced norm $\lVert \cdot \rVert_{r}$ on $C_{c}(\G)$ is defined by
\begin{equation*}
\lVert f \rVert_{r}  := \sup_{x\in \Gu } \lVert \rho_{x}(f) \rVert,
\end{equation*}
and the \emph{reduced groupoid $C^{*}$-algebra} $C^{*}_{r}(\G)$ is then the completion of $C_{c}(\G)$ with respect to this norm. The identity map $C_c(\G)\to C_c(\G)$ extends to a surjective $*$-homomorphism $C^{*}(\G)\to C^{*}_{r}(\G)$.

For all $f\in \CcG$, we have the inequalities $\lVert f\rVert_{\infty} \leq \lVert f\rVert_{r} \leq  \lVert f\rVert$, where $\lVert \cdot \rVert_{\infty}$ denotes the supremum-norm. If $f\in C_c(U)$ for a bisection $U$, then
\begin{equation} \label{eq:norm-bisection}
\|f\|=\lVert f\rVert_r=\lVert f\rVert_{\infty}.
\end{equation}
It follows from this and the definition of the product \eqref{eprod} that the space $C_{0}(\Gu)$ with its usual C$^{*}$-algebra structure is embedded into $C_{r}^{*}(\G)$ and $C^{*}(\G)$.

\smallskip

Next, recall the definition of induced representations. Take $x\in \Gu $ and consider the full group C$^*$-algebra $C^*(\Gxx)$ of the isotropy group $\Gxx$. Denote by $u_{g}$, $g\in \G_{x}^{x}$, the canonical unitary generators of $C^{*}(\G_{x}^{x})$. Assume that $\rho\colon  C^*(\G^{x}_{x}) \to B(H)$ is a representation. Let $L$ be the space of functions $\xi \colon  \G_{x} \to H$ satisfying
\begin{equation*}
\xi(gh)=\rho(u_{h}^{*})\xi(g)
\end{equation*}
for all $g \in \G_{x}$ and $h\in \G_{x}^{x}$ and such that
\begin{equation*}
\sum_{g\in \G_{x}/\G_{x}^{x}}\lVert \xi(g)\rVert^{2}<\infty .
\end{equation*}
We define a representation $\Ind\rho\colon C^*(\G)\to B(L)$ by
\begin{equation*}
((\Ind \rho)(f)\xi)(g) :=
\sum_{h\in \G^{r(g)}}f(h) \xi(h^{-1}g) \quad\text{for}\quad f\in C_c(\G).
\end{equation*}

If $\rho=\lambda_{\Gxx}$, the left regular representation of $\Gxx$, then $\Ind\lambda_{\Gxx}$ is unitarily equivalent to the representation $\rho_x\colon C^*(\G)\to B(\ell^2(\G_x))$ defined by~\eqref{eq:rhox}. Explicitly, in this case we have $L=\ell^2(\G_x\times_{\Gxx}\Gxx)$, where $\G_x\times_{\Gxx}\Gxx$ is the quotient of $\G_x\times\Gxx$ by the equivalence relation $(gh',h)\sim (g,h'h)$  ($g\in\G_x$, $h,h'\in\Gxx$), and the canonical bijection $\G_x\times_{\Gxx}\Gxx\to\G_x$, $(g,h)\mapsto gh$, gives rise to a unitary intertwiner between $\Ind\lambda_{\Gxx}$ and~$\rho_x$.

For a general representation $\rho$ of $C^*(\Gxx)$, we have a coisometric map
$$
v\colon L\to H,\qquad v\xi=\xi(x),
$$
with the adjoint given by
\begin{equation}\label{eq:projection}
(v^*\zeta)(g)=\begin{cases}\rho(u_g)^*\zeta,&\text{if}\ g\in\Gxx,\\ 0,&\text{otherwise}.\end{cases}
\end{equation}

Denote by $\eta_x$ the restriction map $C_{c}(\G) \ni f \mapsto f|_{\G_{x}^{x}}\in C_{c}(\G_{x}^{x})$. We then have
\begin{equation}\label{eq:projection2}
v(\Ind\rho)(f)v^*=\rho(\eta_x(f))
\end{equation}
for all $f\in C_c(\G)$. Taking a faithful representation and then the regular representation of~$C^*(\Gxx)$ as~$\rho$, we get the following result.

\begin{lemma}\label{lem:contraction}
The restriction map $\eta_x\colon C_c(\G)\to C_c(\Gxx)$ extends to a completely positive contraction
$$
\vartheta_x\colon C^*(\G)\to C^*(\Gxx),
$$
as well as to a completely positive contraction $\vartheta_{x,r}\colon C^*_r(\G)\to C^*_r(\Gxx)$.
\end{lemma}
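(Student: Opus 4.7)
The plan is to obtain both maps as compressions of induced representations by the coisometry $v$ introduced just above the statement, exploiting the identity $v(\Ind\rho)(f)v^*=\rho(\eta_x(f))$ from~\eqref{eq:projection2}.

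For the full case, I would fix a faithful representation $\rho\colon C^*(\Gxx)\to B(H)$ (for instance, the universal representation, or a direct sum of a faithful representation with the left regular one). The map
$$
\Phi\colon C^*(\G)\to B(H),\qquad \Phi(a):=v(\Ind\rho)(a)v^*,
$$
is the composition of the $*$-representation $\Ind\rho$ with the compression by the coisometry $v$ (so $\|v\|=1$), hence is completely positive and contractive. By~\eqref{eq:projection2}, $\Phi(f)=\rho(\eta_x(f))$ for $f\in C_c(\G)$, so on the dense subspace $C_c(\G)$ the image of $\Phi$ lies in $\rho(C^*(\Gxx))$; by continuity this is inherited by the whole of $C^*(\G)$. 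Since $\rho$ is faithful, the inverse $\rho^{-1}\colon\rho(C^*(\Gxx))\to C^*(\Gxx)$ is a $*$-isomorphism, and $\vartheta_x:=\rho^{-1}\circ\Phi$ is the desired completely positive contractive extension of $\eta_x$.

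For the reduced case I would apply the same idea with $\rho$ equal to the left regular representation $\lambda_{\Gxx}$. The point is that the identification recalled in the paragraph just before~\eqref{eq:projection} shows $\Ind\lambda_{\Gxx}\cong\rho_x$, which by the very definition of $\|\cdot\|_r$ factors through $C^*_r(\G)$. Hence the compression $a\mapsto v(\Ind\lambda_{\Gxx})(a)v^*$ descends to a completely positive contraction on $C^*_r(\G)$, and the analogous density argument shows that its image lies in $\lambda_{\Gxx}(C^*_r(\Gxx))$, which is isometrically isomorphic to $C^*_r(\Gxx)$. Composing with the inverse of this isomorphism yields $\vartheta_{x,r}$.

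The only delicate point I anticipate is the claim that the image lands in (the copy of) $C^*(\Gxx)$ rather than merely in $B(H)$; this is not immediate from the general theory of compressions and must be justified by the density of $C_c(\G)$ in $C^*(\G)$ (respectively $C^*_r(\G)$), combined with the identity $\Phi(f)=\rho(\eta_x(f))$ for $f\in C_c(\G)$ and the norm-closedness of $\rho(C^*(\Gxx))$. Everything else is a routine application of general properties of induced representations and the fact that a compression of a $*$-representation by a coisometry is a CP contraction.
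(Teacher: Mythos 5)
Your proposal is correct and is essentially the paper's own argument: the paper proves the lemma in one sentence by taking $\rho$ in the identity $v(\Ind\rho)(f)v^*=\rho(\eta_x(f))$ of~\eqref{eq:projection2} to be a faithful representation of $C^*(\Gxx)$ for $\vartheta_x$, and the regular representation $\lambda_{\Gxx}$ (using $\Ind\lambda_{\Gxx}\cong\rho_x$, which factors through $C^*_r(\G)$) for $\vartheta_{x,r}$, exactly as you do. Your added care about the image landing in $\rho(C^*(\Gxx))$ via density of $C_c(\G)$ and norm-closedness of the image is the right way to make the paper's terse statement fully rigorous.
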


The states $\varphi\circ\vartheta_x$ are of particular interest to us. Their GNS-representations are described as follows.

\begin{lemma}\label{lem:GNS}
Assume $\varphi$ is a state on $C^*(\Gxx)$ and $(H_\varphi,\pi_\varphi,\xi_\varphi)$ is the associated GNS-triple. Consider the induced representation $\Ind\pi_\varphi\colon C^*(\G)\to B(L)$. Then $(L,\Ind\pi_\varphi,v^*\xi_\varphi)$ is a GNS-triple associated with $\varphi\circ\vartheta_x$, where $v^*$ is the isometry~\eqref{eq:projection}.
\end{lemma}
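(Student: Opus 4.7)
I would verify the two defining properties of a GNS triple for the state $\varphi\circ\vartheta_x$: that $\langle(\Ind\pi_\varphi)(\cdot)v^*\xi_\varphi,v^*\xi_\varphi\rangle=\varphi\circ\vartheta_x$, and that $v^*\xi_\varphi$ is cyclic for $\Ind\pi_\varphi$. The vector-state identity is immediate from~\eqref{eq:projection2}: for $f\in C_c(\G)$,
$$
\langle(\Ind\pi_\varphi)(f)v^*\xi_\varphi,v^*\xi_\varphi\rangle=\langle v(\Ind\pi_\varphi)(f)v^*\xi_\varphi,\xi_\varphi\rangle=\langle\pi_\varphi(\eta_x(f))\xi_\varphi,\xi_\varphi\rangle=\varphi(\vartheta_x(f)),
$$
and it extends to all of $C^*(\G)$ by continuity.

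For cyclicity, I would use Lemma~\ref{lem:span} to reduce to studying $(\Ind\pi_\varphi)(f)v^*\xi_\varphi$ for $f\in C_c(U)$ with $U$ an open bisection. A direct unfolding of~\eqref{eq:projection} and the formula defining $\Ind\pi_\varphi$ shows that this vector vanishes when $x\notin s(U)$, and otherwise, letting $g_0\in U$ be the unique element with $s(g_0)=x$, it is supported on the single coset $g_0\Gxx\subset\G_x$ with value $f(g_0)\pi_\varphi(u_{g_0^{-1}g})^*\xi_\varphi$ at $g\in g_0\Gxx$. Decomposing $L$ as the Hilbert direct sum $\bigoplus_{[g_0]\in\G_x/\Gxx}H(g_0)$, where $H(g_0)\subset L$ consists of functions supported on $g_0\Gxx$ and is identified isometrically with $H_\varphi$ via evaluation at $g_0$, it suffices to prove that for each $g_0\in\G_x$ the values at $g_0$ of the vectors $(\Ind\pi_\varphi)(f)v^*\xi_\varphi$ coming from such bisection-supported $f$ are dense in $H_\varphi$.

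Fixing $g_0$, choose an open bisection $U_1$ with $U_1\cap s^{-1}(x)=\{g_0\}$ and $f_1\in C_c(U_1)$ with $f_1(g_0)=1$; for each $h\in\Gxx$, use discreteness of $\Gxx$ in $\G$ to choose an open bisection $U_2\ni h$ and $f_2\in C_c(U_2)$ with $f_2(h)=1$. Since the product of two bisections is a bisection, applying the above bisection computation to $f_1*f_2\in C_c(U_1U_2)$ yields $(\Ind\pi_\varphi)(f_1*f_2)v^*\xi_\varphi\in H(g_0)$ with value $\pi_\varphi(u_h)\xi_\varphi$ at $g_0$. Finite linear combinations over $h\in\Gxx$ then realize every $\pi_\varphi(a)\xi_\varphi$ with $a\in\mathbb{C}\Gxx$ as such a value, and cyclicity of $\xi_\varphi$ for $\pi_\varphi$ completes the density. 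The main technical obstacle is the coset bookkeeping in the bisection computation; once that is in place, cyclicity for $\Ind\pi_\varphi$ reduces to cyclicity of $\xi_\varphi$ in the original GNS representation, and the non-Hausdorff setting is absorbed uniformly by Lemma~\ref{lem:span}.
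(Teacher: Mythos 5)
Your proposal is correct and follows essentially the same route as the paper: the vector-state identity is read off from~\eqref{eq:projection2}, and cyclicity rests on the identical key computation that $(\Ind\pi_\varphi)(f)v^*\xi_\varphi$, for $f$ supported on a bisection meeting $\G_x$ only in $g_0h$, is supported on the single coset $g_0\Gxx$ with value $\pi_\varphi(u_h)\xi_\varphi$ at $g_0$, so that cyclicity of $\xi_\varphi$ does the rest. The only cosmetic differences are that you prove density of the orbit directly via the decomposition $L=\bigoplus_{[g_0]\in\G_x/\Gxx}H(g_0)$ and manufacture the relevant bisection as a product $U_1U_2$, whereas the paper shows the orthogonal complement of the orbit vanishes using a single bisection chosen through $gh$.
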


\bp Identity~\eqref{eq:projection2} for $\rho=\pi_\varphi$ implies that
$$
\varphi\circ\vartheta_x=((\Ind\pi_\varphi)(\cdot)v^*\xi_\varphi,v^*\xi_\varphi).
$$
Therefore we only need to check that the vector $v^*\xi_\varphi$ is cyclic.

It suffices to show that if $\xi\in L$ is nonzero, then there exists $f\in C_c(\G)$ such that
$$
(\xi,(\Ind\pi_\varphi)(f)v^*\xi_\varphi)\ne0.
$$
Let $g\in\G_x$ be such that $\xi(g)\ne0$. There exists $h\in \Gxx$ such that $(\xi(g),\pi_\varphi(u_{h})\xi_\varphi)\ne0$. Choose a bisection $U$ containing $gh$ and pick $f\in C_{c}(U)$ with $f(gh)=1$. Since $U \cap \G_{x}=\{gh\}$ we have $f(gh)=1$ and $f=0$ on $\G_{x}\setminus\{gh\}$. Then
$$
((\Ind \pi_\varphi)(f)v^*\xi_\varphi)(g)=\sum_{g' \in \G^{x}} f(gg')(v^*\xi_\varphi)(g'^{-1})
=f(gh)(v^*\xi_\varphi)(h^{-1}) = \pi_\varphi(u_{h})\xi_\varphi ,
$$
and similarly $(\Ind \pi_\varphi)(f)v^*\xi_\varphi=0$ on $\G_x\setminus g\Gxx$. It follows that
$$
(\xi,(\Ind\pi_\varphi)(f)v^*\xi_\varphi)=(\xi(g),\pi_\varphi(u_{h})\xi_\varphi)\ne0,
$$
as needed.
\ep

The maps $\vartheta_x$ and $\vartheta_{x,r}$ have large multiplicative domains. Specifically, we have the following elementary result, which we will use repeatedly throughout the paper.

\begin{lemma}\label{lem21}
Assume that $g_{1}, g_{2}, \dots, g_{n}$ are distinct points in $\G_{x}^{x}$ and $\{U_{i}\}_{i=1}^{n}$ is a family of bisections with $g_{i}\in U_{i}$ for each $i$.
Assume $f \in C_{c}(\G)$ is zero outside $\bigcup_{i=1}^{n} U_{i}$. Then $f$ lies in the multiplicative domains of $\vartheta_x$  and $\vartheta_{x,r}$.
\end{lemma}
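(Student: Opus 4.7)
The strategy is to verify the defining equalities of the multiplicative domain directly at the level of the dense subalgebra $C_c(\G)$. Recall that for a contractive completely positive map $\vartheta$ between $C^*$-algebras the Schwarz inequality $\vartheta(a^*a)\geq\vartheta(a)^*\vartheta(a)$ holds, and an element $a$ lies in the multiplicative domain precisely when this and its right-hand counterpart $\vartheta(aa^*)\geq\vartheta(a)\vartheta(a)^*$ are equalities. So I would verify $\vartheta_x(f^{*}*f)=\vartheta_x(f)^*\vartheta_x(f)$ and $\vartheta_x(f*f^{*})=\vartheta_x(f)\vartheta_x(f)^*$ at the $C_c$-level through $\eta_x$, and then read off the same equalities for $\vartheta_{x,r}$ from Lemma~\ref{lem:contraction}.

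The starting observation is that the bisection property of each $U_i$ together with $g_i\in U_i\cap\Gxx$ (so $r(g_i)=s(g_i)=x$) forces
\[
U_i\cap\Gxx=U_i\cap\G^x=U_i\cap\G_x=\{g_i\}.
\]
Consequently $\eta_x(f)=\sum_{i=1}^{n}f(g_i)u_{g_i}$ as an element of $\C\Gxx$.

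Next I would compute $(f^{*}*f)(g)$ for $g\in\Gxx$ directly. After the substitution $k=h^{-1}$ in the convolution formula one has $(f^{*}*f)(g)=\sum_{k\in\G_x}\overline{f(k)}\,f(kg)$. For a term to be nonzero one needs both $k\in\supp f\cap\G_x$ and $kg\in\supp f\cap\G_x$, and the previous observation forces $k=g_i$ and $kg=g_j$ for some $i,j$. Hence
\[
(f^{*}*f)(g)=\sum_{i,j\,:\,g_i^{-1}g_j=g}\overline{f(g_i)}\,f(g_j),
\]
which is exactly the value at $g$ of the group-algebra product $\eta_x(f)^*\eta_x(f)$. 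Therefore $\eta_x(f^{*}*f)=\eta_x(f)^*\eta_x(f)$ in $C_c(\Gxx)$, and passage to the norm closure gives $\vartheta_x(f^{*}*f)=\vartheta_x(f)^*\vartheta_x(f)$. The symmetric computation using $\G^x$ in place of $\G_x$ yields $\vartheta_x(f*f^{*})=\vartheta_x(f)\vartheta_x(f)^*$. Since $\vartheta_{x,r}$ likewise extends $\eta_x$, the identical identities transfer to it, proving the claim for both maps.

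The whole argument is essentially careful support bookkeeping, so I do not expect a real obstacle; the one point that requires attention is the possible non-Hausdorffness of $\G$, but it causes no issue here because every relevant computation takes place inside the open Hausdorff bisections $U_i$.
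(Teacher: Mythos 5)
Your proof is correct, but it takes a different route from the paper's, so let me compare. You verify only the two Schwarz equalities $\vartheta_x(f^**f)=\vartheta_x(f)^*\vartheta_x(f)$ and $\vartheta_x(f*f^*)=\vartheta_x(f)\vartheta_x(f)^*$ (your support bookkeeping for these is right: injectivity of $r|_{U_i}$ and $s|_{U_i}$ together with $r(g_i)=s(g_i)=x$ gives $U_i\cap\G^x=U_i\cap\G_x=\{g_i\}$, and your evaluation of $(f^**f)(g)$ and $(f*f^*)(g)$ for $g\in\Gxx$ then matches the group-algebra products), and you then invoke Choi's characterization of the multiplicative domain as the set where both Schwarz inequalities are equalities. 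Note that since $\vartheta_x$ and $\vartheta_{x,r}$ are non-unital contractions, that characterization requires a pass through the unitization --- standard, but it deserves a word. The paper avoids Choi's theorem entirely and instead verifies multiplicativity directly against an arbitrary $f'\in C_c(\G)$: for $g\in\Gxx$ one has $(f*f')(g)=\sum_{h\in\G^x}f(h)f'(h^{-1}g)$, and any $h\in\G^x$ with $f(h)\neq0$ lies in some $U_j$, which by the bisection property forces $h=g_j\in\Gxx$; hence $\eta_x(f*f')=\eta_x(f)*\eta_x(f')$, and symmetrically $\eta_x(f'*f)=\eta_x(f')*\eta_x(f)$, so by density $\vartheta_x(fa)=\vartheta_x(f)\vartheta_x(a)$ and $\vartheta_x(af)=\vartheta_x(a)\vartheta_x(f)$ for all $a$. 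The direct argument is slightly more economical: it pins down only one factor of the convolution (your computation must control both factors), needs no abstract background theorem, and produces the explicit $C_c$-level identity $\eta_x(f*f')=\eta_x(f)*\eta_x(f')$ in the exact form the paper reuses later (e.g., in the proof of Lemma~\ref{lem:c-star-norm}). What your route buys is that it reduces the lemma to checking equality in two concrete convolutions, $f^**f$ and $f*f^*$, rather than a statement quantified over all $f'$; of course the bimodule identities against arbitrary $f'$ are recovered afterwards from the multiplicative-domain conclusion, so nothing is lost.
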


\begin{proof}
Take $f'\in C_c(\G)$. For $g\in\Gxx$, we have that
\begin{equation*}
(f*f')(g) = \sum_{h \in \G^{x}} f(h) f'(h^{-1}g) .
\end{equation*}
If $h \in \G^{x}$ and $f(h)\neq 0$, we have $h\in U_{j}$ for some $j$. Since $g_{j}\in U_{j}$ and $r(g_{j})=x$ we conclude that $h=g_{j}\in \G_{x}^{x}$. This implies that
\begin{equation*}
(f*f')(g) = \sum_{h \in \G_{x}^{x}} f(h) f'(h^{-1}g)=(\eta_{x}(f)*\eta_{x}(f'))(g),
\end{equation*}
hence $\eta_x(f*f')=\eta_x(f)*\eta_x(f')$. In a similar way we check that $\eta_x(f'*f)=\eta_x(f')*\eta_x(f)$.
\end{proof}

\section{A possibly exotic norm on the group algebras of isotropy groups}\label{section3}

Assume that we are given a locally compact \'etale groupoid $\G$ and a point $x\in \Gu $. We define a C$^*$-norm on the group algebra $\C\Gxx$ as follows.

\begin{defn}\label{def}
For $h\in C_c(\Gxx)$, let
\begin{equation}\label{eq:e-norm}
\|h\|_e:=\sup_{\rho\in \mathcal S_x}\|\rho(h)\|,
\end{equation}
where $\mathcal S_x$ is the collection of representations $\rho$ of $C^*(\Gxx)$ such that $\Ind\rho$ factors through $C^*_r(\G)$.
\end{defn}

Since $\mathcal S_x$ contains the regular representation $\lambda_{\Gxx}$, this is indeed a C$^*$-norm and $\lVert\cdot\rVert_e\ge\lVert\cdot\rVert_r$. Denote by $C^*_e(\Gxx)$ the completion of $\C\Gxx$ with respect to this norm.

\begin{prop}\label{prop:e-charact}
For any representation $\rho$ of $C^*(\Gxx)$, the representation $\Ind\rho$ of~$C^*(\G)$ factors through~$C^*_r(\G)$ if and only if $\rho$ factors through $C^*_e(\Gxx)$.
\end{prop}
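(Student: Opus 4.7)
The forward implication (if $\Ind\rho$ factors through $C^*_r(\G)$, then $\rho$ factors through $C^*_e(\Gxx)$) is immediate from the definition: such a $\rho$ lies in $\mathcal S_x$, so $\|\rho(h)\|\le\|h\|_e$ for all $h\in\C\Gxx$, and $\rho$ extends to $C^*_e(\Gxx)$.

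For the nontrivial direction, my plan is to reduce to a statement about states via GNS. Assume $\rho$ factors through $C^*_e(\Gxx)$ and decompose $\rho=\bigoplus_i\pi_{\varphi_i}$ into cyclic subrepresentations, where each $\pi_{\varphi_i}$ is the GNS representation of a state $\varphi_i$ on $C^*(\Gxx)$. Each cyclic piece is a subrepresentation of $\rho$ and hence also factors through $C^*_e(\Gxx)$, which forces $|\varphi_i(h)|\le\|h\|_e$, i.e.\ $\varphi_i$ itself factors through $C^*_e(\Gxx)$. Since induction commutes with direct sums, $\Ind\rho=\bigoplus_i\Ind\pi_{\varphi_i}$, so it suffices to show each $\Ind\pi_{\varphi_i}$ factors through $C^*_r(\G)$. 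By Lemma~\ref{lem:GNS}, $\Ind\pi_{\varphi_i}$ is the GNS representation of the state $\varphi_i\circ\vartheta_x$, and a GNS representation factors through $C^*_r(\G)$ precisely when the associated state annihilates $J:=\ker(C^*(\G)\to C^*_r(\G))$. So it remains to prove the reduction: \emph{if $\varphi$ is a state on $C^*(\Gxx)$ factoring through $C^*_e(\Gxx)$, then $\varphi\circ\vartheta_x$ factors through $C^*_r(\G)$.}

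For this, I use that $\pi:=\bigoplus_{\sigma\in\mathcal S_x}\sigma$ is a faithful representation of $C^*_e(\Gxx)$, since $\|\pi(h)\|=\|h\|_e$ by definition of the norm. A standard consequence is that the state space of $C^*_e(\Gxx)$ is the weak-$*$ closed convex hull of the vector states $\psi_{\sigma,\eta}(h)=(\sigma(h)\eta,\eta)$ for $\sigma\in\mathcal S_x$ and unit vectors $\eta\in H_\sigma$. Identity~\eqref{eq:projection2} applied to $\sigma$ and $\eta$ gives $\psi_{\sigma,\eta}\circ\vartheta_x=((\Ind\sigma)(\cdot)v^*\eta,v^*\eta)$, which is a vector state of $\Ind\sigma$; because $\Ind\sigma$ factors through $C^*_r(\G)$ by the definition of $\mathcal S_x$, this functional annihilates $J$. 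The map $\psi\mapsto\psi\circ\vartheta_x$ on dual spaces is weak-$*$ continuous (it is the dual of the bounded map $\vartheta_x$ from Lemma~\ref{lem:contraction}), and the set of positive functionals on $C^*(\G)$ that annihilate $J$ is weak-$*$ closed and convex. This set therefore contains the image of the entire state space of $C^*_e(\Gxx)$, and in particular $\varphi\circ\vartheta_x$, which finishes the proof.

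The main obstacle is more bookkeeping than substance: one has to verify carefully that the cyclic pieces of $\rho$ inherit factorisation through $C^*_e(\Gxx)$, and that the equivalence between a state factoring through $C^*_r(\G)$ and its GNS representation doing so is in force (this is the standard fact that $\psi(j^*j)=0$ for $j\in J$ forces $\pi_\psi(J)=0$ on the cyclic vector, hence everywhere). Once these are in place, the combination of weak-$*$ density of vector states of a faithful representation with the weak-$*$ continuity of pre-composition with $\vartheta_x$ does the rest.
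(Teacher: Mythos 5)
Your proof is correct, but it takes a genuinely different route from the paper's. The paper's argument is three lines: since $\mathcal S_x$ is closed under direct sums, there is a single $\pi\in\mathcal S_x$ with $\|h\|_e=\|\pi(h)\|$ for all $h\in\C\Gxx$, so $\rho$ factoring through $C^*_e(\Gxx)$ means $\rho$ is weakly contained in $\pi$, and the conclusion follows from the general fact that weak containment is preserved under induction. Your proof avoids that black box entirely: via cyclic decomposition, the compatibility of $\Ind$ with direct sums, Lemma~\ref{lem:GNS}, and the standard equivalence between a state killing an ideal and its GNS representation doing so, you reduce to showing that $\varphi\circ\vartheta_x$ annihilates $J=\ker(C^*(\G)\to C^*_r(\G))$ whenever $\varphi$ factors through $C^*_e(\Gxx)$, and you prove this directly: identity~\eqref{eq:projection2} shows that vector states of any $\sigma\in\mathcal S_x$ push forward under precomposition with $\vartheta_x$ to functionals killing $J$, the state space of the unital algebra $C^*_e(\Gxx)$ is the weak$^*$ closed convex hull of the vector states of a faithful representation, the map $\psi\mapsto\psi\circ\vartheta_x$ is weak$^*$ continuous, and the annihilator condition is weak$^*$ closed and convex. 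In effect you prove the nontrivial half of Proposition~\ref{prop:e-norm-states} first and deduce Proposition~\ref{prop:e-charact} from it, inverting the paper's logical order (the paper derives Proposition~\ref{prop:e-norm-states} as a corollary of Proposition~\ref{prop:e-charact} and Lemma~\ref{lem:GNS}), and there is no circularity since you never invoke that proposition. What each approach buys: the paper's proof is much shorter but leans on Fell-type continuity of induction under weak containment, which for this groupoid induction requires its own justification (e.g., realizing $\Ind$ via Rieffel induction along a C$^*$-correspondence); yours is longer but self-contained, using only facts established in the paper plus Hahn--Banach separation. Two small points to tidy: the direct sum $\bigoplus_{\sigma\in\mathcal S_x}\sigma$ ranges over a class, so one should restrict to, say, the cyclic representations in $\mathcal S_x$, which form a set and still realize $\lVert\cdot\rVert_e$ (the same issue is implicit in the paper's choice of $\pi$); and the vector-state approximation should be read inside the state space of $C^*(\Gxx)$ via the weak$^*$ homeomorphic embedding of the state space of $C^*_e(\Gxx)$, since that is where the continuity of $\psi\mapsto\psi\circ\vartheta_x$ is applied --- both routine.
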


\bp
If $\Ind\rho$ factors through~$C^*_r(\G)$, then by definition $\rho\in \mathcal S_x$ and hence $\rho$ factors through $C^*_e(\Gxx)$. Conversely, assume $\rho$ factors through $C^*_e(\Gxx)$. Since the collection $\mathcal S_x$ is closed under direct sums, we can find $\pi\in \mathcal S_x$ such that $\|h\|_e=\|\pi(h)\|$ for all $h\in\C\Gxx$. Then $\rho$ is weakly contained in $\pi$. Since weak containment is preserved under induction, we conclude that $\rho\in\mathcal S_x$.
\ep

The norm $\lVert\cdot\rVert_e$ can also be defined in terms of states as follows.

\begin{prop}\label{prop:e-norm-states}
A state $\varphi$ on $C^*(\Gxx)$ factors through $C^*_e(\Gxx)$ if and only if the state $\varphi\circ\vartheta_x$ on~$C^*(\G)$ factors through $C^*_r(\G)$. Hence, for every $h\in C_c(\Gxx)$, we have
$$
\|h\|_e=\sup\varphi(h^**h)^{1/2},
$$
where the supremum is taken over all states $\varphi$ on $\C\Gxx$ such that $\varphi\circ\eta_x$ is bounded with respect to the reduced norm on $C_c(\G)$.
\end{prop}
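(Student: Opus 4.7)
The plan is to first establish the claimed equivalence via Lemma~\ref{lem:GNS} and Proposition~\ref{prop:e-charact}, and then derive the norm formula by identifying the states on $\C\Gxx$ that arise as restrictions of states on $C^*_e(\Gxx)$.

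For the equivalence, let $\varphi$ be a state on $C^*(\Gxx)$ with GNS-triple $(H_\varphi,\pi_\varphi,\xi_\varphi)$. By Lemma~\ref{lem:GNS}, $(L,\Ind\pi_\varphi,v^*\xi_\varphi)$ is a GNS-triple for $\varphi\circ\vartheta_x$. Since a state on a C$^*$-algebra factors through a quotient if and only if its GNS representation does, $\varphi\circ\vartheta_x$ factors through $C^*_r(\G)$ exactly when $\Ind\pi_\varphi$ does, which by Proposition~\ref{prop:e-charact} is equivalent to $\pi_\varphi$ factoring through $C^*_e(\Gxx)$, i.e., to $\varphi$ factoring through $C^*_e(\Gxx)$.

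For the norm formula, the standard C$^*$-algebraic identity $\|h\|_e^2=\sup_\psi\psi(h^**h)$ with $\psi$ ranging over the states of $C^*_e(\Gxx)$ reduces the task to matching the restrictions of such $\psi$ to $\C\Gxx$ with the states $\varphi$ on $\C\Gxx$ satisfying the boundedness condition. Any state $\varphi$ on $\C\Gxx$ extends uniquely to a state on $C^*(\Gxx)$: Cauchy--Schwarz gives $|\varphi(u_g)|\le 1$, so the GNS representation of $\varphi$ sends each $u_g$ to a unitary and therefore extends to $C^*(\Gxx)$. Combining this with the equivalence just proved, $\varphi$ extends to a state on $C^*_e(\Gxx)$ if and only if $\varphi\circ\vartheta_x$ factors through $C^*_r(\G)$, and since $\vartheta_x$ restricts to $\eta_x$ on $C_c(\G)$, one direction of the required identification is immediate.

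The main delicate step is the converse: assuming $\varphi\circ\eta_x$ is $\|\cdot\|_r$-bounded, one must show that the positive functional $\varphi\circ\vartheta_x$ on $C^*(\G)$ descends to $C^*_r(\G)$. The hypothesis lets us extend $\varphi\circ\eta_x$ by continuity to a bounded linear functional $\tilde\psi$ on $C^*_r(\G)$ that agrees with $\varphi\circ\vartheta_x$ on the dense subspace $C_c(\G)$; continuity together with the closedness of the positive cone of $C^*_r(\G)$ force $\tilde\psi$ to be positive, and its pullback to $C^*(\G)$ coincides with $\varphi\circ\vartheta_x$, which therefore factors through $C^*_r(\G)$. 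This step is where the only care is required, but it is routine given the density of $C_c(\G)$ in $C^*_r(\G)$.
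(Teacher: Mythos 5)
Your proof is correct and takes essentially the same route as the paper: the paper's own proof is exactly your first paragraph (a state factors through a quotient iff its GNS representation does, combined with Lemma~\ref{lem:GNS} and Proposition~\ref{prop:e-charact}), and it leaves the derivation of the norm formula implicit, which you spell out correctly. One small polish: the positivity of your extension $\tilde\psi$ follows not from closedness of the positive cone as such, but from the density of $\{f^**f : f\in C_c(\G)\}$ in the positive cone of $C^*_r(\G)$ together with $\tilde\psi(f^**f)=(\varphi\circ\vartheta_x)(f^**f)\ge 0$, which your setup already provides.
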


\bp
Since a state on a C$^*$-algebra vanishes on a closed ideal if and only if the associated GNS-representation vanishes on the same ideal, the result follows from Lemma~\ref{lem:GNS} and Proposition~\ref{prop:e-charact}.
\ep

Similarly to the existence of contractions $\vartheta_x\colon C^*(\G)\to C^*(\Gxx)$ and $\vartheta_{x,r}\colon C^*_r(\G)\to C^*_r(\Gxx)$, identity~\eqref{eq:projection2} for any faithful representation $\rho$ of $C^*_e(\Gxx)$ shows that $\eta_x$ extends to a completely positive contraction
$$
\vartheta_{x,e}\colon C^*_r(\G)\to C^*_e(\Gxx).
$$
This contraction is surjective and then as a Banach space $C^*_e(\Gxx)$ is isometrically isomorphic to the quotient space $C^*_r(\G)/\ker \vartheta_{x,e}$. Indeed, if $A\subset C^*_r(\G)$ is the multiplicative domain of $\vartheta_{x,e}$, then by Lemma~\ref{lem21} the image of the C$^*$-algebra $A$ in $C^*_e(\Gxx)$ under $\vartheta_{x,e}$ is dense, hence it coincides with~$C^*_e(\Gxx)$ and the norm on $C^*_e(\Gxx)\cong A/\ker(\vartheta_{x,e}|_A)$ is the quotient norm. Since $\vartheta_{x,e}$ is a contraction, we then conclude that the norm $\lVert\cdot\rVert_e$ can also be described as the quotient norm on the Banach space $C^*_r(\G)/\ker \vartheta_{x,e}$, as claimed.

From the practical point of view this is still not very useful, as it is not clear what the kernel of~$\vartheta_{x,e}$ is. The following theorem sharpens the above observation and will allow us to describe the kernel.

\begin{thm}\label{thmnorm}
For any locally compact \'etale groupoid $\G$, $x\in \Gu$ and $h\in C_c(\Gxx)$, we have
\begin{equation}\label{eq:norm-inf}
\lVert h \rVert_{e} = \inf\left\{ \lVert f \rVert_{r} : f\in C_{c}(\G),\ \eta_{x}(f)=h    \right\}.
\end{equation}
Furthermore, let $\V$ be a neighbourhood base at $x$ partially ordered by containment. For each $V\in\V$, choose a function $q_V\in C_c(\Gu)$ such that $q_V(x)=1$, $0\le q_V\le 1$ and $\supp q_V\subset V$. For $h\in C_c(\Gxx)$, choose any function $f\in C_c(\G)$ such that $\eta_x(f)=h$. Then
\begin{equation}\label{eq:e-norm-limit}
\|h\|_e=\lim_{V\in\V}\|q_V*f*q_V\|_r.
\end{equation}
\end{thm}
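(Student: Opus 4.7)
The plan is to prove the limit formula first and deduce the infimum formula from it.

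For the lower bound $\|q_V*f*q_V\|_r \ge \|h\|_e$, Lemma~\ref{lem21} applied with $n = 1$, $g_1 = x$, $U_1 = \Gu$ (the unit space being a bisection containing $x$) shows that $q_V$ lies in the multiplicative domain of $\vartheta_{x,e}$, with $\vartheta_{x,e}(q_V) = q_V(x) = 1$. Hence $\vartheta_{x,e}(q_V*f*q_V) = h$, and the contractivity of $\vartheta_{x,e}\colon C^*_r(\G) \to C^*_e(\Gxx)$ yields the inequality.

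The key ingredient for the upper bound is a vanishing lemma: for every $A \in C_c(\G)$ with $\eta_x(A) = 0$, one has $\|q_V*A*q_V\|_r \to 0$. I would prove it by partitioning $A$. The finite set $\Gxx \cap \supp A = \{g_1, \ldots, g_n\}$ (finite since $\G_x$ is discrete) admits bisection neighborhoods $U_i \ni g_i$, for which $U_i \cap \Gxx = \{g_i\}$ automatically, and the remainder of $\supp A$ can be covered by open Hausdorff sets $U_\alpha'$ disjoint from $\Gxx$. Using Lemma~\ref{lem:span} and a subordinate partition of unity, I would write $A = \sum_i A_i + \sum_\alpha A_\alpha'$ with $A_i \in C_c(U_i)$ and $A_\alpha' \in C_c(U_\alpha')$. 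For each $A_\alpha'$, the compact image $(r,s)(\supp A_\alpha') \subset \Gu \times \Gu$ is disjoint from $(x,x)$, so by Hausdorffness some product neighborhood $V \times V$ of $(x,x)$ misses it entirely; hence $q_V*A_\alpha'*q_V = 0$ for small $V$. Each $q_V*A_i*q_V \in C_c(U_i)$ is bisection-supported, so by \eqref{eq:norm-bisection} its reduced norm equals its sup norm, which tends to $|A_i(g_i)| = |A(g_i)\chi_i(g_i)| = 0$ by continuity (using $\eta_x(A) = 0$).

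To complete the upper bound, I would combine the vanishing lemma with the identification $\|h\|_e = \inf\{\|a\|_r : a \in C^*_r(\G),\ \vartheta_{x,e}(a) = h\}$ recorded just before the theorem. Given $\eps > 0$, I would construct $f' \in C_c(\G)$ with $\eta_x(f') = h$ and $\|f'\|_r \le \|h\|_e + \eps$ by a geometrically convergent bootstrap: start with an almost-optimal $C^*_r$-lift $a_0$ of $h$, approximate by $g_0 \in C_c(\G)$, then iteratively approximate $C^*_r$-lifts $a_k$ of the residuals $h_k = h - \eta_x(g_0 + \ldots + g_{k-1})$ by $C_c$-elements $g_k$ with errors shrinking like $2^{-k}\eps$. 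Localising each $a_k$ via a fixed $q_W$ on both sides (which preserves $\vartheta_{x,e}(a_k) = h_k$ by the multiplicative domain property) keeps $\bigcup_k \supp\eta_x(g_k)$ inside a common finite subset $F \subset \Gxx$, where the $\ell^1$- and $e$-norms on $\C F$ are equivalent; a final naive bisection lift of the last small remainder then has controlled reduced norm. Writing $q_V*f*q_V = q_V*(f - f')*q_V + q_V*f'*q_V$, the vanishing lemma applied to $f - f' \in C_c(\G) \cap \ker\eta_x$ kills the first summand in the limit, while the second is bounded by $\|f'\|_r \le \|h\|_e + \eps$, giving $\limsup_V \|q_V*f*q_V\|_r \le \|h\|_e + \eps$.

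The infimum formula then follows immediately: the contraction property gives $\|h\|_e \le \|f\|_r$ for any $f$ with $\eta_x(f) = h$, while the limit formula exhibits specific $C_c(\G)$-lifts $q_V*f_0*q_V$ with reduced norm tending to $\|h\|_e$. The main obstacle in my approach is the bootstrap construction of $f'$: a single-step approximation of a $C^*_r$-lift by a $C_c$-element yields $\eta_x$ only close to $h$ in $\|\cdot\|_e$, and a direct correction to restore exact equality would require precisely the $C_c$ infimum estimate being proved — hence the iterative construction with careful support control.
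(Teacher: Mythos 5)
Your lower bound and your vanishing lemma are essentially sound and agree in substance with the paper's Lemma~\ref{lem:e-expression}: your re-decomposition adapted to $\Gxx$, which makes each bisection piece vanish at its distinguished point, replaces the paper's grouping trick with the sets $W_k=\bigcap_{i:k_i=k}U_i$ and the comparison with $f_i(g_{k_i})q_V*\un_{W_{k_i}}*q_V$ (minor repairs needed: the correct justification is that $A_i(g_i)=A(g_i)=0$ because every \emph{other} covering set omits $g_i$ --- a bisection meets $\G_x$ at most once and the $U_\alpha'$ miss $\Gxx$ --- not a pointwise formula $A(g_i)\chi_i(g_i)$; and your covering must also account for points of $\Gxx$ outside $\supp A$, which in the non-Hausdorff case may lie in the closure of $\supp A$). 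The genuine gap is in the bootstrap construction of $f'$, specifically in the support control. Localising by a fixed $q_W$ does nothing to the isotropy fibre: every $g\in\Gxx$ satisfies $r(g)=s(g)=x\in W$, so $\eta_x(q_W*\tilde g*q_W)=\eta_x(\tilde g)$, whose finite support is completely unconstrained by $W$. Consequently $\bigcup_k\supp\eta_x(g_k)$ is \emph{not} contained in a fixed finite set: the sets $F_{k+1}\supset F_k\cup\supp\eta_x(g_k)$ grow with each approximation step, the constant in the comparison $\lVert\cdot\rVert_{\ell^1}\le|F|\,\lVert\cdot\rVert_e$ on $\C F$ grows with them, and since each tolerance $\eps_k$ must be fixed \emph{before} choosing $g_k$ --- which is what determines $F_{k+1}$ --- the terminal requirement $|F_{K+1}|\,\eps_K\le\eps$ for the naive $\ell^1$-lift of the last remainder is circular, and the iteration need not ever terminate.

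This is not a repairable detail but the crux of the theorem. Given the quotient description of $\lVert\cdot\rVert_e$ recorded before the theorem, formula \eqref{eq:norm-inf} is \emph{equivalent}, by the standard quotient-lifting argument, to the density of $C_c(\G)\cap\ker\eta_x$ in $\ker\vartheta_{x,e}$ --- and the paper deduces that density as a corollary \emph{of} the theorem, not conversely. Your closing sentence correctly identifies this circularity, but the bootstrap does not break it. The paper's proof of the hard inequality avoids lifting altogether and is dual in nature: it first shows that the infimum expression $\lVert\cdot\rVert_e'$ is a C$^*$-norm on $\C\Gxx$ (Lemma~\ref{lem:c-star-norm}, using the multiplicative-domain Lemma~\ref{lem21} to produce lifts compatible with products and adjoints), and then shows that every state $\varphi$ bounded with respect to $\lVert\cdot\rVert_e'$ is bounded with respect to $\lVert\cdot\rVert_e$: the composition $\varphi\circ\eta_x$ is reduced-bounded simply because $\lVert\cdot\rVert_e'$ is the quotient reduced norm on $C_c(\G)/\ker\eta_x$, and Proposition~\ref{prop:e-norm-states} --- resting on Lemma~\ref{lem:GNS}, which identifies the GNS representation of $\varphi\circ\vartheta_x$ with the induced representation $\Ind\pi_\varphi$ --- converts this into $\lVert\cdot\rVert_e$-boundedness, whence $\lVert h\rVert_e'^2=\sup_\varphi\varphi(h^**h)\le\lVert h\rVert_e^2$ by the C$^*$-property. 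Some representation-theoretic input of this kind (states and induction), absent from your proposal, is what closes the loop.
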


We divide the proof of the theorem into several lemmas. Denote the right hand side of~\eqref{eq:norm-inf} by~$\|h\|_e'$.

\begin{lemma}\label{lem:e-expression}
If $\eta_x(f)=h$, then the limit in \eqref{eq:e-norm-limit} exists and equals $\|h\|_e'$.
\end{lemma}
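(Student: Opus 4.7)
The plan is to bracket the net from both sides. For every $V\in\V$ the function $q_V*f*q_V$ lies in $C_c(\G)$ and
$$
\eta_x(q_V*f*q_V)(\gamma)=q_V(r(\gamma))q_V(s(\gamma))f(\gamma)=q_V(x)^2f(\gamma)=h(\gamma)\quad(\gamma\in\Gxx),
$$
so $q_V*f*q_V$ is a candidate in the infimum defining $\lVert h\rVert_e'$. This gives $\lVert q_V*f*q_V\rVert_r\ge\lVert h\rVert_e'$ for every $V$, in particular $\liminf_V\lVert q_V*f*q_V\rVert_r\ge\lVert h\rVert_e'$.

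For the matching upper bound on the $\limsup$, pick $\eps>0$ and $f'\in C_c(\G)$ with $\eta_x(f')=h$ and $\lVert f'\rVert_r\le\lVert h\rVert_e'+\eps$. Since $q_V\in C_0(\Gu)\subset C^*_r(\G)$ has reduced norm $\le 1$ by \eqref{eq:norm-bisection}, one has $\lVert q_V*f'*q_V\rVert_r\le\lVert f'\rVert_r$, so
$$
\lVert q_V*f*q_V\rVert_r\le \lVert h\rVert_e'+\eps+\lVert q_V*g*q_V\rVert_r,\qquad g:=f-f',\ \eta_x(g)=0.
$$
Everything therefore reduces to proving the \emph{key claim}: if $g\in C_c(\G)$ and $\eta_x(g)=0$ then $\lVert q_V*g*q_V\rVert_r\to 0$.

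To prove the key claim, write $g=\sum_{i=1}^N g_i$ with $g_i\in C_c(U_i)$ on bisections (Lemma~\ref{lem:span}), and let $S$ be the (finite) set of $\gamma\in\Gxx$ with $g_i(\gamma)\ne 0$ for some $i$. For each $i$ with $U_i\cap S=\{\gamma_i\}$, fix a bisection $W_{\gamma_i}\ni\gamma_i$ (automatically $W_{\gamma_i}\cap\Gxx=\{\gamma_i\}$ since $U$ is a bisection) together with $\psi_i\in C_c(U_i\cap W_{\gamma_i})$ equal to $1$ near $\gamma_i$, and split $g_i=\psi_ig_i+(1-\psi_i)g_i$ with $\psi_ig_i\in C_c(W_{\gamma_i})$ and $(1-\psi_i)g_i\in C_c(U_i\setminus\{\gamma_i\})$. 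Regrouping,
$$
g=\sum_{\gamma\in S}G_\gamma+G_0,\qquad G_\gamma:=\sum_{i:\gamma_i=\gamma}\psi_ig_i\in C_c(W_\gamma),
$$
where $G_\gamma(\gamma)=\sum_{i:\gamma\in U_i}g_i(\gamma)=g(\gamma)=0$ and $G_0$ is a finite sum of functions on bisections that either do not meet $\Gxx$ or meet it in a single point where the function vanishes (the latter handles those $i$ with $U_i\cap\Gxx=\{\gamma\}$ but $\gamma\notin S$, forcing $g_i(\gamma)=0$).

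It then suffices to show that $\lVert q_V*\psi*q_V\rVert_r=\lVert q_V*\psi*q_V\rVert_\infty\to 0$ for any $\psi\in C_c(U)$ on a bisection $U$ with $U\cap\Gxx\subset\{\gamma\}$ and $\psi(\gamma)=0$ (when $\gamma\in U$). Given $\eps>0$, the set $K_\eps=\{\delta\in U:|\psi(\delta)|\ge\eps\}$ is compact in the Hausdorff space $U$ and, by hypothesis, does not contain $\gamma$; hence $(r(\delta),s(\delta))\ne(x,x)$ for every $\delta\in K_\eps$. The compact set $\{(r(\delta),s(\delta)):\delta\in K_\eps\}\subset\Gu\times\Gu$ therefore avoids $(x,x)$, and since $\Gu\times\Gu$ is Hausdorff there is a neighbourhood $V_0$ of $x$ with $V_0\times V_0$ disjoint from this set. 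For every $V\in\V$ with $V\subset V_0$ we get $q_V(r(\delta))q_V(s(\delta))=0$ on $K_\eps$ and $|\psi|<\eps$ off $K_\eps$, so $\lVert q_V*\psi*q_V\rVert_\infty<\eps$. Summing the finitely many pieces yields $\lVert q_V*g*q_V\rVert_r\to 0$, hence $\limsup_V\lVert q_V*f*q_V\rVert_r\le\lVert h\rVert_e'+\eps$, and as $\eps$ was arbitrary the limit exists and equals $\lVert h\rVert_e'$.

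The main obstacle is the rewriting in the third paragraph: one must arrange, via the bump functions $\psi_i$, that the global vanishing $g|_{\Gxx}=0$ is inherited by each surviving piece supported near a fixed $\gamma\in\Gxx$, since bisection-by-bisection the individual $g_i(\gamma)$ need not vanish. Fortunately the construction requires only a bisection through each $\gamma$ separately — never a simultaneous separation of distinct points of $\Gxx$ in $\G$ — so it is insensitive to the groupoid failing to be Hausdorff.
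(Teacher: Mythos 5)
Your proof is correct, and while it follows the paper's overall skeleton—both arguments reduce everything to the key claim \eqref{eq:f0} that $\eta_x(g)=0$ forces $\lVert q_V*g*q_V\rVert_r\to0$, after decomposing via Lemma~\ref{lem:span} and exploiting $\lVert\cdot\rVert_r=\lVert\cdot\rVert_\infty$ on bisections from \eqref{eq:norm-bisection}—you execute the crucial cancellation step by a genuinely different mechanism. The paper keeps the pieces $f_i$ intact, forms $W_k=\bigcap_{i:k_i=k}U_i$, approximates each $q_V*f_i*q_V$ in reduced norm by $f_i(g_k)\,q_V*\un_{W_k}*q_V$ using continuity of $f_i$ at $g_k$, and lets the common terms cancel exactly because $\sum_{i:k_i=k}f_i(g_k)=f(g_k)=0$; bisections disjoint from $\Gxx$ get a separate compactness argument yielding $q_V*f_i*q_V=0$ eventually. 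You instead perform bump-function surgery, cutting each $g_i$ into a piece inside a common bisection $W_\gamma$ and a remainder supported off $\Gxx$, so that the global vanishing $g|_{\Gxx}=0$ is inherited by the single function $G_\gamma\in C_c(W_\gamma)$ with $G_\gamma(\gamma)=0$; then one uniform estimate—compactness of $K_\eps=\{|\psi|\ge\eps\}$ in the Hausdorff set $U$, separation of the compact set $(r,s)(K_\eps)$ from $(x,x)$, and the sup-norm identity on bisections—handles all cases at once, including the bisections missing $\Gxx$ (where your conclusion is merely smallness rather than the paper's eventual exact vanishing, which however is not needed for the lemma). Your top-level bracketing is also slightly more direct: playing the competitor $q_V*f*q_V$ (which satisfies $\eta_x(q_V*f*q_V)=h$ since $q_V(x)=1$) against a near-optimal $f'$ gives $\liminf_V\ge\lVert h\rVert_e'\ge\limsup_V$ in one stroke, whereas the paper first establishes existence of the limit by a nesting argument with $q_W$ and only then identifies its value via independence of the lift $f$. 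Two cosmetic points: the parenthetical justifying $W_{\gamma_i}\cap\Gxx=\{\gamma_i\}$ should invoke that $W_{\gamma_i}$ (not $U$) is a bisection, and for the regrouped sum to land in a single $C_c(W_\gamma)$ you must—as your notation $G_\gamma\in C_c(W_\gamma)$ already indicates—fix one bisection $W_\gamma$ per point $\gamma\in S$ rather than one per index $i$.
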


\bp Let us show first that if $f\in C_c(\G)$ satisfies $\eta_x(f)=0$, then
\begin{equation}\label{eq:f0}
\lim_{V\in\V}\|q_V*f*q_V\|_r=0.
\end{equation}

By Lemma~\ref{lem:span} we can write $f=\sum^m_{i=1}f_i$, with $f_i\in C_c(U_i)$ for a bisection $U_i$. Assume
$$
\Gxx\cap\Big(\bigcup^m_{i=1} U_i\Big)=\{g_1,\dots,g_n\}.
$$

Take an index $i$. Assume first that $\Gxx\cap U_i=\emptyset$. In this case we eventually have $q_V*f_i*q_V=0$. Indeed, let $K_i\subset U_i$ be the support of $f_i|_{U_i}$. For every $g \in K_i$ we must have that either $r(g)\neq x$ or $s(g)\neq x$, so we can find a neighbourhood $W_g$ of $g$ in $U_i$ with either $x\notin\overline{r(W_g)}$ or $x\notin\overline{s(W_g)}$. By compactness of $K_i$ there is a neighbourhood $W$ of $x$ in $\Gu $ such that for each $g\in K_i$ we have either $r(g)\notin W$ or $s(g)\notin W$. This implies that if $x\in V\subset W$, then $q_V*f_{i}*q_V=0$, proving our claim.

Consider now indices $i$ such that $\Gxx\cap U_i\ne\emptyset$. For every such $i$ there is a unique index $k_i$ such that $\Gxx\cap U_i=\{g_{k_i}\}$. Put
$$
W_k=\bigcap_{i:k_i=k}U_i.
$$
For every $k$, the set $W_k$ is a bisection containing $g_k$ and we have
\begin{equation}\label{eq:f(gk)}
\sum_{i:k_i=k}f_i(g_k)=f(g_k)=0.
\end{equation}

Since $(r^{-1}(V)\cap U_i)_{V\in\V}$ is a neighbourhood base for $g_{k_i}$ in $U_i$ and $f_i|_{U_i}$ is continuous at $g_{k_i}$, we have
$$
\lim_V\|q_V*f_i*q_V-f_i(g_{k_i})q_V*\un_{W_{k_i}}*q_V\|_\infty=0.
$$
For $x\in V\subset \overline{V}\subset r(W_{k_i})$, we have $q_V*f_i*q_V-f_i(g_{k_i})q_V*\un_{W_{k_i}}*q_V\in C_c(U_i)$. Hence, by~\eqref{eq:norm-bisection}, we may conclude that
$$
\lim_V\|q_V*f_i*q_V-f_i(g_{k_i})q_V*\un_{W_{k_i}}*q_V\|_r=0,
$$
which together with~\eqref{eq:f(gk)} gives
$$
\lim_V\Big\|\sum_{i:k_i=k}q_V*f_i*q_V\Big\|_r=0
$$
for all $k=1,\dots,n$. This, combined with the fact that we eventually have $q_V*f_i*q_V=0$ for $i$ such that $\Gxx\cap U_i=\emptyset$, proves~\eqref{eq:f0}.

\smallskip

Assume now that $\eta_x(f)=h$. For every $W\in \V$, we have
$$
\lim_V\|q_V*f*q_V-(q_Vq_W)*f*(q_Wq_V)\|_r=0
$$
by~\eqref{eq:f0} applied to $f-q_W*f*q_W$ or simply because $\|q_V-q_Wq_V\|_\infty\to0$.
As $\|(q_Vq_W)*f*(q_Wq_V)\|_r\le\|q_W*f*q_W\|_r$, this implies that
$$
\limsup_{V}\|q_V*f*q_V\|_r\le\|q_W*f*q_W\|_r.
$$
Since this is true for all $W\in\V$, we conclude that the limit in~\eqref{eq:e-norm-limit} indeed exists. Furthermore, since $\eta_x(q_V*f*q_V)=\eta_x(f)=h$ and $\|q_V*f*q_V\|_r\le\|f\|_r$, we have
\begin{equation}\label{eq:e-norm-limit1}
\|h\|_e'\le \lim_{V}\|q_V*f*q_V\|_r\le\|f\|_r.
\end{equation}

Now, take another function $f'\in C_c(\G)$ such that $\eta_x(f')=h$. By~\eqref{eq:f0} applied to $f-f'$ we conclude that
the limit of $\|q_V*f*q_V\|_r$ is independent of $f$ such that $\eta_x(f)=h$. Together with the inequalities~\eqref{eq:e-norm-limit1} and the definition of $\|h\|_e'$, this proves that this limit is $\|h\|_e'$.
\ep

It is clear that $\lVert\cdot\rVert_e'$ is a seminorm on $C_c(\Gxx)$, but we can now say much more.

\begin{lemma}\label{lem:c-star-norm}
The seminorm  $\lVert \cdot \rVert_{e}'$ is a C$^*$-norm on the group algebra $\C\Gxx$, and $\lVert \cdot \rVert_{e}'\geq \lVert \cdot \rVert_{e}$.
\end{lemma}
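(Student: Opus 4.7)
The plan is to combine the limit characterization $\lVert h\rVert_e' = \lim_V \lVert q_V * f * q_V\rVert_r$ from Lemma~\ref{lem:e-expression} with the multiplicative-domain result of Lemma~\ref{lem21}, reducing each C$^*$-norm axiom to the corresponding property of the reduced norm. Given $h \in C_c(\Gxx)$, write $\supp h = \{g_1,\dots,g_n\}$, pick a bisection $U_i \ni g_i$ for each $i$, and take $f \in C_c(\G)$ supported in $\bigcup_i U_i$ with $f(g_i) = h(g_i)$; the bisection property forces $U_i \cap \Gxx = \{g_i\}$, so $\eta_x(f) = h$. By Lemma~\ref{lem21} both $f$ and $q_V * f * q_V$ lie in the multiplicative domain of $\eta_x$ for every $V \in \V$.

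Subadditivity and positive homogeneity of $\lVert\cdot\rVert_e'$ follow at once from the limit formula, and the identity $\lVert h^*\rVert_e' = \lVert h\rVert_e'$ from $(q_V * f * q_V)^* = q_V * f^* * q_V$, which holds since $q_V$ is real-valued. For submultiplicativity, pick such representatives $f_1, f_2$ of $h_1, h_2$. Then $(q_V * f_1 * q_V) * (q_V * f_2 * q_V)$ lies in $C_c(\G)$, and since $q_V * f_1 * q_V$ is in the multiplicative domain of $\eta_x$, its $\eta_x$-image equals $h_1 * h_2$. The infimum characterization~\eqref{eq:norm-inf} therefore gives
\[
\lVert h_1 * h_2\rVert_e' \leq \lVert q_V * f_1 * q_V\rVert_r \cdot \lVert q_V * f_2 * q_V\rVert_r,
\]
and letting $V$ shrink to $\{x\}$ yields $\lVert h_1 * h_2\rVert_e' \leq \lVert h_1\rVert_e' \lVert h_2\rVert_e'$.

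The C$^*$-identity $\lVert h^* * h\rVert_e' = \lVert h\rVert_e'^2$ is the step I expect to require the most care. Submultiplicativity together with $*$-invariance gives $\lVert h^* * h\rVert_e' \leq \lVert h\rVert_e'^2$. For the reverse, I exploit the operator inequality $q_V^2 \leq 1$ in the unitization of $C^*_r(\G)$, which yields $f^* * q_V^2 * f \leq f^* * f$ and hence
\[
q_V * f^* * q_V^2 * f * q_V \leq q_V * (f^* * f) * q_V
\]
as positive elements of $C^*_r(\G)$. Applying the C$^*$-identity for $\lVert\cdot\rVert_r$ together with monotonicity of the norm on positive elements,
\[
\lVert q_V * f * q_V\rVert_r^2 = \lVert q_V * f^* * q_V^2 * f * q_V\rVert_r \leq \lVert q_V * (f^* * f) * q_V\rVert_r.
\]
Since $f$ is in the multiplicative domain, $\eta_x(f^* * f) = h^* * h$, so Lemma~\ref{lem:e-expression} applied to both sides as $V$ shrinks gives $\lVert h\rVert_e'^2 \leq \lVert h^* * h\rVert_e'$.

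Finally, for the comparison $\lVert\cdot\rVert_e \leq \lVert\cdot\rVert_e'$, take any $\rho \in \mathcal S_x$ and any $f \in C_c(\G)$ with $\eta_x(f) = h$. Identity~\eqref{eq:projection2} gives $\rho(h) = v(\Ind\rho)(f)v^*$, and since $\Ind\rho$ factors through $C^*_r(\G)$ with $v$ a contraction, $\lVert\rho(h)\rVert \leq \lVert f\rVert_r$. Taking the infimum over such $f$ and the supremum over $\rho \in \mathcal S_x$ yields $\lVert h\rVert_e \leq \lVert h\rVert_e'$; in particular $\lVert\cdot\rVert_e'$ is a genuine norm, since $\lVert\cdot\rVert_e$ already is.
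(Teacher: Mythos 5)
Your proposal is correct and follows essentially the same route as the paper: both rest on the multiplicative-domain Lemma~\ref{lem21} together with the limit formula of Lemma~\ref{lem:e-expression}, both prove the C$^*$-identity via $q_V^2\le 1$ and monotonicity of the norm on positive elements, and your comparison $\lVert\cdot\rVert_e\le\lVert\cdot\rVert_e'$ just unwinds the paper's appeal to the contraction $\vartheta_{x,e}$ back to the underlying identity~\eqref{eq:projection2}. The only deviations are cosmetic: you take both representatives $f_1,f_2$ in bisection-supported form and pass to the limit over $V$, where the paper runs an $\varepsilon$-argument with one near-optimal representative.
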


\begin{proof}
Since $\vartheta_{x,e}\colon C^*_r(\G)\to C^*_e(\Gxx)$ is a contraction, if $h\in C_{c}(\G_{x}^{x})$ and $f\in C_{c}(\G)$ satisfy $\eta_{x}(f)=h$, we get
$$
\|h\|_e=\|\eta_x(f)\|_e\le\|f\|_r.
$$
By the definition of $\lVert \cdot \rVert_e'$ it follows that $\|h\|_e\le\|h\|_e'$. In particular, $\lVert\cdot\rVert_e'$ is a norm.

Next, let us check that $\|h_1*h_2\|_e'\le\|h_1\|_e'\|h_2\|_e'$. Fix $\eps>0$ and choose $f_1\in C_c(\G)$ such that $\eta_x(f_1)=h_1$ and $\|f_1\|_r<\|h_1\|_e'+\eps$. Suppose $h_2$ is supported on $\{g_1,\dots,g_n\}\subset\Gxx$. Let $U_1,\dots,U_n$ be bisections such that $\Gxx\cap U_i=\{g_i\}$. Choose $\varphi_i\in C_c(U_i)$ such that $\varphi_i(g_i)=1$ and let $f_2=\sum_i h_2(g_i)\varphi_i$. Then $\eta_x(f_2)=h_2$ and by Lemma~\ref{lem21} we have $\eta_x(f_1*f_2)=\eta_x(f_1)*\eta_x(f_2)=h_1*h_2$. By Lemma~\ref{lem:e-expression}, by replacing $\varphi_i$ with $q_V*\varphi_i*q_V$ (where $q_V$ is as in Theorem~\ref{thmnorm}), we may assume that $\|f_2\|_r<\|h_2\|_e'+\eps$.  Hence
$$
\|h_1*h_2\|_e'\le\|f_1*f_2\|_r\le(\|h_1\|_e'+\eps)(\|h_2\|_e'+\eps),
$$
which implies that $\|h_1*h_2\|_e'\le\|h_1\|_e'\|h_2\|_e'$, as $\eps>0$ was arbitrary.

In order to show that $\lVert\cdot\rVert_e'$ is a C$^*$-norm, it remains to check that $\|h\|_e'^2\le\|h^**h\|_e'$. Similarly to the previous paragraph, by Lemma~\ref{lem21} we can find $f\in C_c(\G)$ such that $\eta_x(f)=h$ and $\eta_x(f^**f)=h^**h$. Then we have
\begin{align*}
\|h\|_e'^2&=\lim_{V}\|q_V*f*q_V\|_r^2= \lim_{V}\|q_V*f^**q_V^2*f*q_V\|_r\\
&\le \lim_{V}\|q_V*f^**f*q_V\|_r=\|h^**h\|_e',
\end{align*}
which completes the proof of the lemma.
\ep

\bp[Proof of Theorem~\ref{thmnorm}]
We already know that $\lVert\cdot\rVert_e\le\lVert\cdot\rVert_e'$. In order to prove the opposite inequality it suffices to show that if $\varphi$ is a state on $\C\Gxx$ bounded with respect to $\lVert\cdot\rVert_e'$, then it is also bounded with respect to $\lVert\cdot\rVert_e$. By Proposition~\ref{prop:e-norm-states} this means that we have to show that $\varphi\circ\eta_x$ is bounded with respect to the reduced norm on $C_c(\G)$. But this is obvious, since by definition $\lVert\cdot\rVert_e'$ is the quotient reduced norm on $C_c(\G)/\ker\eta_x$.
\ep

\begin{cor}
The space $\ker(\eta_x\colon C_c(\G)\to C_c(\Gxx))$ is dense in $\ker(\vartheta_{x,e}\colon C^*_r(\G)\to C^*_e(\Gxx))$.
\end{cor}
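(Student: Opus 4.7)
The plan is to exploit the formula for $\lVert\cdot\rVert_e$ supplied by Theorem~\ref{thmnorm}, which identifies $\lVert\cdot\rVert_e$ with the quotient of the reduced norm on $C_c(\G)$ modulo $\ker\eta_x$. Once this is in hand, approximating an element of $\ker\vartheta_{x,e}$ by elements of $\ker\eta_x$ is essentially a standard quotient/lifting argument.

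Concretely, given $a\in\ker\vartheta_{x,e}\subset C^*_r(\G)$, the first step is to use density of $C_c(\G)$ in $C^*_r(\G)$ to pick a sequence $f_n\in C_c(\G)$ with $f_n\to a$ in the reduced norm. Since $\vartheta_{x,e}$ is a contraction and agrees with $\eta_x$ on $C_c(\G)$, this gives
\begin{equation*}
\lVert\eta_x(f_n)\rVert_e=\lVert\vartheta_{x,e}(f_n)\rVert_e\longrightarrow\lVert\vartheta_{x,e}(a)\rVert_e=0.
\end{equation*}

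The second step is the lifting: by Theorem~\ref{thmnorm}, for each $n$ there exists $g_n\in C_c(\G)$ with $\eta_x(g_n)=\eta_x(f_n)$ and $\lVert g_n\rVert_r\le\lVert\eta_x(f_n)\rVert_e+1/n$. In particular $\lVert g_n\rVert_r\to 0$. Setting $h_n:=f_n-g_n\in C_c(\G)$, we have $\eta_x(h_n)=0$, so $h_n\in\ker\eta_x$, while
\begin{equation*}
\lVert h_n-a\rVert_r\le\lVert f_n-a\rVert_r+\lVert g_n\rVert_r\longrightarrow 0.
\end{equation*}
This exhibits $a$ as a norm limit of elements of $\ker\eta_x$, proving the corollary.

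There is essentially no obstacle beyond invoking Theorem~\ref{thmnorm}; all the real work (showing that elements of $C_c(\Gxx)$ can be lifted to elements of $C_c(\G)$ whose reduced norm is close to the exotic norm) has already been done in proving that theorem. The argument is simply the standard fact that a contraction onto a quotient whose norm is the quotient norm has the kernel of the algebraic part dense in the kernel of the completed map.
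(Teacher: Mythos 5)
Your proof is correct and follows essentially the same route as the paper's: approximate $a\in\ker\vartheta_{x,e}$ by $f\in C_c(\G)$, observe that $\lVert\eta_x(f)\rVert_e$ is small since $\vartheta_{x,e}$ is a contraction agreeing with $\eta_x$ on $C_c(\G)$, then use the infimum formula \eqref{eq:norm-inf} of Theorem~\ref{thmnorm} to lift $\eta_x(f)$ to an element of small reduced norm and subtract. The only difference is cosmetic (sequences and $1/n$ versus a fixed $\eps$), so nothing further is needed.
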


\bp
Take $a\in\ker\vartheta_{x,e}$ and $\eps>0$. We can find $f_1\in C_c(\G)$ such that $\|a-f_1\|_r<\eps$. Then
$$
\|\eta_x(f_1)\|_e=\|\vartheta_{x,e}(f_1-a)\|_e<\eps.
$$
By Theorem~\ref{thmnorm} it follows that we can find $f_2\in C_c(\G)$ such that $\eta_x(f_2)=\eta_x(f_1)$ and $\|f_2\|_r<\eps$. Then for $f:=f_1-f_2\in C_c(\G)$ we have $\eta_x(f)=0$ and $\|a-f\|_r\le\|a-f_1\|_r+\|f_2\|_r<2\eps$.
\ep

\begin{cor}\label{cor:exact-sequence}
If $\{x\}\subset\Gu$ is an invariant subset, so that $\G^y_x=\emptyset$ for all $y\ne x$, then $\G\setminus\Gxx$ is a locally compact \'etale groupoid and we have a short exact sequence
\begin{equation}\label{eq:exact-sequence}
0\to C^*_r(\G\setminus\Gxx)\to C^*_r(\G)\to C^*_e(\Gxx)\to0.
\end{equation}
\end{cor}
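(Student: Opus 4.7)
The plan is to identify $\ker\vartheta_{x,e}$ with $C^*_r(\G\setminus\Gxx)$ realized as an ideal of $C^*_r(\G)$. Since $\Gu$ is Hausdorff, $\{x\}$ is closed, so by invariance $\G\setminus\Gxx=r^{-1}(\Gu\setminus\{x\})=s^{-1}(\Gu\setminus\{x\})$ is open in $\G$ and thereby inherits a locally compact \'etale groupoid structure. For any open invariant $U\subseteq\Gu$, extension by zero extends to an isometric embedding $C^*_r(\G|_U)\hookrightarrow C^*_r(\G)$ with ideal image: invariance forces $\rho_y(f)=0$ for $y\notin U$ and $f\in C_c(\G|_U)$, while for $y\in U$ the two versions of $\rho_y$ agree on $C_c(\G|_U)$. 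Specialising to $U=\Gu\setminus\{x\}$ identifies $C^*_r(\G\setminus\Gxx)$ with the $\|\cdot\|_r$-closure of $C_c(\G\setminus\Gxx)$ inside $C^*_r(\G)$. Since $\vartheta_{x,e}$ is surjective (its image contains $C_c(\Gxx)$) and annihilates $C_c(\G\setminus\Gxx)\subseteq\ker\eta_x$, it annihilates $C^*_r(\G\setminus\Gxx)$ and descends to a surjection $C^*_r(\G)/C^*_r(\G\setminus\Gxx)\twoheadrightarrow C^*_e(\Gxx)$. Using the preceding corollary, exactness reduces to the density
\begin{equation*}
\ker\eta_x\subseteq\overline{C_c(\G\setminus\Gxx)}^{\|\cdot\|_r}.
\end{equation*}

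For this density, take $f\in\ker\eta_x$ and by Lemma~\ref{lem:span} write $f=\sum_i f_i$ with $f_i\in C_c(U_i)$ for bisections $U_i$. Summands with $U_i\cap\Gxx=\emptyset$ already lie in $C_c(\G\setminus\Gxx)$, so discard them; for the rest $U_i\cap\Gxx=\{g_i\}$ for a unique point. Group indices by the common value $g_i=g_k$, set $W_k=\bigcap_{i\colon g_i=g_k}U_i$ (a bisection containing $g_k$), pick $\varphi_k\in C_c(W_k)$ with $\varphi_k(g_k)=1$, and replace $f_i$ by $\tilde f_i:=f_i-f_i(g_k)\varphi_k\in C_c(U_i)$. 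Each $\tilde f_i$ vanishes at $g_k$, and since $\sum_{i\colon g_i=g_k}f_i(g_k)=\eta_x(f)(g_k)=0$ the sum per group is unchanged. The problem thus reduces to showing that any $f'\in C_c(U)$ with $U$ a bisection, $U\cap\Gxx=\{g_0\}$, and $f'(g_0)=0$ belongs to the $\|\cdot\|_r$-closure of $C_c(U\setminus\{g_0\})\subseteq C_c(\G\setminus\Gxx)$. Choose $\psi_n\in C_c(U)$ equal to $1$ on a shrinking neighbourhood base at $g_0$; then $(1-\psi_n)f'\in C_c(U\setminus\{g_0\})$ and $(1-\psi_n)f'\to f'$ uniformly (since $f'$ is continuous and $f'(g_0)=0$), which upgrades to $\|\cdot\|_r$-convergence via \eqref{eq:norm-bisection}.

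The main obstacle is this last density step: a function in $C_c(\G)$ that merely vanishes pointwise on $\Gxx$ need not be compactly supported inside $\G\setminus\Gxx$, so one cannot simply regard it as an element of $C_c(\G\setminus\Gxx)$ and must produce a genuine norm approximation. The decomposition into bisections is what makes this possible, as it reduces the question to uniform approximation on a single bisection, where the reduced and supremum norms coincide by \eqref{eq:norm-bisection}.
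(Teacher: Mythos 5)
Your argument establishes everything except the one point where invariance of $\{x\}$ is used beyond the openness of $\G\setminus\Gxx$: you never check that $\vartheta_{x,e}$ is a $*$-homomorphism. In general $\vartheta_{x,e}$ is only a completely positive contraction, so what your argument literally yields is that $\ker\vartheta_{x,e}=C^*_r(\G\setminus\Gxx)$ and that the induced map $C^*_r(\G)/C^*_r(\G\setminus\Gxx)\to C^*_e(\Gxx)$ is a completely positive linear bijection; that is exactness of \eqref{eq:exact-sequence} as a sequence of Banach spaces, not of C$^*$-algebras. For the corollary as stated one needs the quotient map to be multiplicative, i.e., $\eta_x(f_1*f_2)=\eta_x(f_1)*\eta_x(f_2)$ for \emph{all} $f_1,f_2\in C_c(\G)$, and this is exactly where invariance enters: since $\G^y_x=\emptyset$ for $y\ne x$, we have $\G^x=\G_x=\Gxx$, so for $g\in\Gxx$,
\[
(f_1*f_2)(g)=\sum_{h\in\G^{x}}f_1(h)f_2(h^{-1}g)=\sum_{h\in\Gxx}\eta_x(f_1)(h)\,\eta_x(f_2)(h^{-1}g),
\]
so that $\eta_x$ is multiplicative on all of $C_c(\G)$ (not merely on the partial multiplicative domain provided by Lemma~\ref{lem21}), and hence $\vartheta_{x,e}$ is a surjective $*$-homomorphism. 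The paper flags this point explicitly (``$\vartheta_{x,e}\colon C^*_r(\G)\to C^*_e(\Gxx)$ is a $*$-homomorphism in the present case'') before reducing, just as you do, to the density of $C_c(\G\setminus\Gxx)$ in $\ker\eta_x$. The fix is short, but as written your proposal does not prove the statement.

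Apart from this, your route matches the paper's skeleton: openness of $\G\setminus\Gxx$, the isometric ideal embedding $C^*_r(\G\setminus\Gxx)\hookrightarrow C^*_r(\G)$ (which you rightly verify, while the paper leaves it implicit), the preceding corollary, and the density of $C_c(\G\setminus\Gxx)$ in $\ker\eta_x$. For that density the paper argues in one line from \eqref{eq:f0}: for $f\in\ker\eta_x$ one has $\|q_V*f*q_V\|_r\to0$ and $f-q_V*f*q_V\in C_c(\G\setminus\Gxx)$ once $q_V=1$ on a neighbourhood of $x$. You instead redo the bisection decomposition of the proof of \eqref{eq:f0} by hand, recentering with the functions $\varphi_k$ and then cutting off near $g_0$ using \eqref{eq:norm-bisection}; this is correct and self-contained, at the cost of repeating work already done. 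One small imprecision in the cut-off step: requiring $\psi_n=1$ on a shrinking neighbourhood base at $g_0$ controls where $1-\psi_n$ vanishes but not where $\psi_n$ lives; for the uniform convergence $\psi_n f'\to0$ you also need $\supp\psi_n$ to shrink to $g_0$ (or to argue via continuity of $f'$ at $g_0$: given $\eps>0$, take $\psi$ supported where $|f'|<\eps$ and equal to $1$ near $g_0$), which is easily arranged by Urysohn's lemma in the locally compact Hausdorff set $U$.
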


\bp
It is easy to see that $\G\setminus\Gxx$ is an open subgroupoid of $\G$.
As $\vartheta_{x,e}\colon C^*_r(\G)\to C^*_e(\Gxx)$ is a $*$-homomorphism in the present case and $\ker\eta_x$ is dense in $\ker\vartheta_{x,e}$, we then only need to show that $C_c(\G\setminus\Gxx)$ is dense in $\ker\eta_x$ with respect to the reduced norm on $C_c(\G)$. But this follows from~\eqref{eq:f0}, since $f-q_V*f*q_V\in C_c(\G\setminus\Gxx)$ for any $f\in C_c(\G)$ as long as $q_V=1$ in a neighbourhood of $x$.
\ep

\begin{remark}
If one is interested only in the setting of Corollary~\ref{cor:exact-sequence}, then it is actually easier to first realize that there is a C$^*$-norm $\lVert\cdot\rVert_e$ on $\C\Gxx$ making~\eqref{eq:exact-sequence} exact, and then that this norm must satisfy~\eqref{eq:e-norm} and~\eqref{eq:norm-inf}, cf.~\cite{Rbook}*{Proposition~II.4.5} (but note that this proposition erroneously claims that we get the reduced norm) and \cite{W}*{Lemma~2.7}. Similar statements can be proved for any closed invariant subset of $\Gu$ in place of~$\{x\}$.
\hfill$\diamondsuit$
\end{remark}

As a first example consider the transformation groupoids. Assume $X$ is a locally compact Hausdorff space and a discrete group $\Gamma$ acts on $X$ by homeomorphisms. The transformation groupoid $\G:=\Gamma\ltimes X$ corresponding to such an action is the set $\Gamma\times X$ with product
\begin{equation*}
(g,hx)(h,x)=(gh,x),\quad\text{so that}\quad r(g,x)=gx,\quad s(g,x)=x\quad \text{and}\quad (g,x)^{-1}=(g^{-1}, gx).
\end{equation*}
Endowed with the product topology, it becomes a locally compact Hausdorff \'etale groupoid, and then $C^*_r(\Gamma)\cong C_0(X)\rtimes_r\Gamma$. For the isotropy groups we have $\Gxx=\Gamma_x\times\{x\}\cong\Gamma_x$, where $\Gamma_x$ is the stabilizer of $x$.

\begin{prop}\label{prop:transformation-groupoids}
For any transformation groupoid $\G=\Gamma\ltimes X$, the norm $\lVert\cdot\rVert_e$ on $\C\G^x_x\cong\C \Gamma_x$ coincides with the reduced norm.
\end{prop}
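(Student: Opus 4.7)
Since $\lambda_{\Gxx}\in\mathcal S_x$, the inequality $\|\cdot\|_e\ge\|\cdot\|_r$ is immediate from the definition, so the task is to prove the reverse. My plan is to apply Theorem~\ref{thmnorm} with a carefully chosen lift and bound $\|\rho_y(q_V*f*q_V)\|_r$ uniformly in $y\in X$.

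Write $h=\sum_{g\in F}c_g\lambda_g$ with $F\subset\Gamma_x$ finite. For each $g\in F$ pick $\chi_g\in C_c(X)$ with $\chi_g(x)=1$, and take the lift $f\in C_c(\G)$ defined by $f(g,y)=c_g\chi_g(y)$ for $g\in F$, and $f=0$ on the remaining bisections $\{g\}\times X$; clearly $\eta_x(f)=h$. A direct unwinding using the product rule $(g_1,g_2y)(g_2,y)=(g_1g_2,y)$ gives $(q_V*f*q_V)(g,y)=c_g q_V(gy)\chi_g(y)q_V(y)$, which for every sufficiently small $V\in\V$ (so that $\chi_g\equiv 1$ on $V$ for every $g\in F$) simplifies to $c_g q_V(gy)q_V(y)$.

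The key step is to observe that, under the canonical identification $\ell^2(\G_y)\cong\ell^2(\Gamma)$, $\delta_{(k,y)}\leftrightarrow\delta_k$, formula~\eqref{eq:rhox} gives
\begin{equation*}
\rho_y(q_V*f*q_V)=T_y\,\lambda(h)\,T_y,
\end{equation*}
where $T_y$ is the diagonal contraction $\delta_{(k,y)}\mapsto q_V(ky)\delta_{(k,y)}$ and $\lambda\colon\Gamma\to B(\ell^2(\Gamma))$ is the left regular representation of $\Gamma$. Decomposing $\Gamma$ into its left $\Gamma_x$-orbits shows that $\lambda|_{\Gamma_x}$ is a direct sum of copies of $\lambda_{\Gamma_x}$, whence $\|\lambda(h)\|=\|h\|_r$. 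Therefore $\|\rho_y(q_V*f*q_V)\|\le\|T_y\|^2\|h\|_r\le\|h\|_r$ uniformly in $y$, so $\|q_V*f*q_V\|_r\le\|h\|_r$, and Theorem~\ref{thmnorm} yields $\|h\|_e\le\|h\|_r$.

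The main obstacle is verifying the factorization $\rho_y(q_V*f*q_V)=T_y\lambda(h)T_y$; this rests on the special feature of transformation groupoids that the source fibre $s^{-1}(ky)=\{(h_1,ky):h_1\in\Gamma\}$ is naturally parametrised by $\Gamma$, which allows one to recognise the formula for $\rho_y$ as essentially a twisted form of left translation by $\Gamma$. Note that no amenability hypothesis on $\Gamma$ or on the action is needed, since the estimate uses only the operator-theoretic bound $\|T_y\|\le 1$ together with the exact identification of $\lambda|_{\Gamma_x}$ with a multiple of $\lambda_{\Gamma_x}$.
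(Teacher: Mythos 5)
Your proof is correct, but it takes a more computational route than the paper's. The paper's own proof is a one-liner: it invokes the standard isometric embedding $C^*_r(\Gamma_x)\hookrightarrow M(C_0(X)\rtimes_r\Gamma)$ and, for $a\in\C\Gamma_x$ and any single $f\in C_c(X)$ with $f(x)=1$ and $\|f\|_\infty=1$, uses the lift $af\in C_c(\G)$ with $\eta_x(af)=a$, so that the infimum formula \eqref{eq:norm-inf} gives $\|a\|_e\le\|af\|_r\le\|a\|_r$ directly, with no need for the $q_V$-limit \eqref{eq:e-norm-limit}. Your argument unpacks by hand exactly the content of that multiplier embedding: the factorization $\rho_y(q_V*f*q_V)=T_y\,\lambda_\Gamma(h)\,T_y$ checks out (both sides send $\delta_k$ to $q_V(ky)\sum_{g\in F}c_g\,q_V(gky)\,\delta_{gk}$ under the identification $\ell^2(\G_y)\cong\ell^2(\Gamma)$), and the decomposition of $\lambda_\Gamma|_{\Gamma_x}$ along the right cosets $\Gamma_x k$ into copies of $\lambda_{\Gamma_x}$ is precisely the reason the paper's embedding is isometric. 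What your version buys is self-containedness, and, notably, it is the transformation-groupoid specialization of the paper's later proof of Theorem~\ref{tmred} (take $U_g=\{g\}\times X$ there): your compression $T_y\lambda_\Gamma(h)T_y$ is the exact analogue of the operator $u^*m_{\tilde q}\lambda_\Gamma(\tilde h)u$ appearing in that proof, so you have in effect rediscovered the mechanism behind the more general theorem. One cosmetic repair: $\chi_g(x)=1$ alone does not force $\chi_g\equiv1$ on small $V$; either choose each $\chi_g$ identically $1$ on a neighbourhood of $x$ (Urysohn), or observe that the discrepancy $c_g\,q_V(gy)(\chi_g(y)-1)q_V(y)$ is supported in a single bisection, so by \eqref{eq:norm-bisection} its reduced norm is at most $|c_g|\sup_{y\in V}|\chi_g(y)-1|\to0$, which does not affect the limit in \eqref{eq:e-norm-limit}.
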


\bp
We have an isometric embedding of $C^*_r(\Gamma_x)$ into $M(C_0(X)\rtimes_r \Gamma)$. Using this embedding, for any $a\in\C \Gamma_x$ and $f\in C_c(X)$ such that $\|f\|_\infty=1$ and $f(x)=1$, we have $af\in C_c(\G)$ and $\eta_x(af)=a$. It follows that $\|a\|_e\le\|af\|_r\le\|a\|_r$. Hence $\|a\|_e=\|a\|_r$.
\ep

Examples where the norm $\lVert\cdot\rVert_e$ is exotic (that is, it is neither reduced nor maximal~\cite{KS}) are, however, just one step away from the transformation groupoids. Namely, they can be found among the associated groupoids of germs, which are defined as follows.

Given an action of a discrete group $\Gamma$ on a locally compact Hausdorff space $X$, define an equivalence relation on $\Gamma\times X$ by saying $(g,x)\sim(h,x)$ if $gy=hy$ for all $y$ in a neighbourhood of $x$. Then $\G=(\Gamma\times X)/{\sim}$ is a groupoid with the unit space~$X$ and the composition $[g,hx][h,x]=[gh,x]$, where $[g,x]$ denotes the equivalence class of $(g,x)$. Equipped with the quotient topology, $\G$ becomes a locally compact \'etale groupoid. In general, such groupoids are non-Hausdorff.

It follows from examples of Higson--Lafforgue--Skandalis~\cite{HLS}*{Section~2}, see also~\cite{W}, that the norm $\lVert\cdot\rVert_e$ can be exotic for groupoids that are group bundles. In the next example we make a minimal modification of their construction to demonstrate the same phenomenon for groupoids of germs.

\begin{example} \label{ex28}
Let $\Gamma$ be a discrete group and $(\Gamma_n)^\infty_{n=1}$ be a decreasing sequence of finite index normal subgroups of $\Gamma$ such that $\bigcap^\infty_{n=1}\Gamma_n=\{e\}$. Let $X_n:=\Gamma/\Gamma_n$ and view $X_n$ as a finite discrete set. We have an action of $\Gamma$ on $X_n$ by translations. Let $X$ be the one-point compactification of $\bigsqcup^\infty_{n=1} X_n$. The actions of $\Gamma$ on $X_n$ together with the trivial action on the point $\infty\in X$ define an action of $\Gamma$ on $X$.

Let $\G$ be the corresponding groupoid of germs. Explicitly, we have
$$
\G=\bigsqcup^{\infty}_{n=1}((\Gamma/\Gamma_n)\times X_n)\sqcup (\Gamma\times\{\infty\}),
$$
each finite set $(\Gamma/\Gamma_n)\times X_n$ is open and has the discrete topology, while the sets $\cup_{n\ge N}(\{\pi_n(g)\}\times X_n)\cup\{(g,\infty)\}$ for $N\in\mathbb N$ form a neighbourhood base at $(g,\infty)$, where $\pi_n\colon\Gamma\to\Gamma/\Gamma_n$ is the quotient map. Note that $\G$ is Hausdorff.

Consider the point $x=\infty\in X=\Gu$, which is the only point of $\Gu$ with nontrivial isotropy. Then $\Gxx=\Gamma\times\{\infty\}\cong\Gamma$. Using~\eqref{eq:e-norm-limit} it is easy to see that, for all $a\in\C\Gamma$,
\begin{equation}\label{eqelimes}
\|a\|_e=\lim_{N\to\infty}\max\big\{\sup_{n\ge N}\|\lambda_n(a)\|,\|a\|_r\big\}=\lim_{n\to\infty}\|\lambda_n(a)\|,
\end{equation}
where $\lambda_n$ denotes the quasi-regular representation of $\Gamma$ on $\ell^2(\Gamma/\Gamma_n)$ and the second equality above follows because the sequence $\{\|\lambda_n(a)\|\}_n$ is nondecreasing and the regular representation of $\Gamma$ is weakly contained in $\bigoplus_{n}\lambda_n$, so that $\|a\|_r\le \sup_{n}\|\lambda_n(a)\|$, cf.~\cite{W}*{Lemma~2.7}.

Following~\citelist{\cite{HLS}\cite{W}}, an explicit example with $\lVert\cdot\rVert_e\ne\lVert\cdot\rVert_r,\lVert\cdot\rVert_{\mathrm{max}}$ can be obtained as follows. Take $\Gamma=SL_2(\Z)$ and $\Gamma_n=\ker(SL_2(\Z)\to SL_2(\Z/2^n\Z))$. Since $\Gamma$ is nonamenable and the trivial representation of $\Gamma$ is contained in $\lambda_n$ for every $n$, we cannot have $\lVert\cdot\rVert_e=\lVert\cdot\rVert_r$. On the other hand, the trivial representation is isolated in the irreducible unitary representations weakly contained in~$\bigoplus^\infty_{n=1}\lambda_n$ by Selberg's theorem, yet $\Gamma$ does not have property~(T). Hence we cannot have $\lVert\cdot\rVert_e=\lVert\cdot\rVert_{\mathrm{max}}$ either.
\end{example}

\section{Application to states and weights with diagonal centralizers} \label{section4}

Throughout this section we assume that $\G$ is a second countable locally compact \'etale groupoid. (But we still do not require $\G$ to be Hausdorff.)

\smallskip

Consider a Radon measure $\mu$ on $\Gu$. A \emph{$\mu$-measurable field of states} on the C$^*$-algebras of the isotropy groups is a collection $\{\varphi_{x}\}_{x\in \Gu }$, where $\varphi_{x}$ is a state on $C^{*}(\G_{x}^{x})$ for all $x\in \Gu$, such that the function
\begin{equation*}
\Gu \ni x \mapsto \sum_{g\in \G_{x}^{x}} f(g) \varphi_{x}(u_{g})
\end{equation*}
is $\mu$-measurable for all $f\in C_{c}(\G)$, where $\{ u_{g}\}_{g\in \Gxx}$ denotes the canonical unitary generators of $C^{*}(\G_{x}^{x})$ for $x\in \Gu$.

By \cite{N}*{Theorem~1.1} \label{tNesh}, there is a bijective correspondence between the states on $C^{*}(\G)$ containing $C_{0}(\Gu )$ in their centralizers and the pairs $(\mu, \{\varphi_{x}\}_{x\in \Gu })$ consisting of a (regular, Borel) probability measure $\mu$ on $\Gu $ and a $\mu$-measurable field of states $\{\varphi_{x}\}_{x\in \Gu}$. The state $\varphi$ corresponding to $(\mu, \{\varphi_{x}\}_{x\in \Gu })$ is given by
$$
\varphi(f)=\int_{\Gu } \sum_{g\in \G_{x}^{x}} f(g) \varphi_{x}(u_{g}) \,d\mu(x)
$$
for all $f\in C_{c}(\G)$. In other words,
\begin{equation}\label{eq:decomposition}
\varphi=\int_\Gu\varphi_x\circ\vartheta_x\,d\mu(x),
\end{equation}
with the integral understood in the weak$^*$ sense. To be precise, the formulation in \cite{N} requires $\G$ to be Hausdorff, but as has already been observed in \cite{NS}, the result is true in the non-Hausdorff case as well, with essentially the same proof; see also Remark~\ref{rem:decomposition} below.

\begin{prop}\label{prop32}
For any second countable locally compact \'etale groupoid $\G$, a state $\varphi$ on $C^{*}(\G)$ with centralizer containing $C_0(\Gu)$ factors through $C^*_r(\G)$ if and only if the state $\varphi_x$ factors through $C^*_e(\Gxx)$ for $\mu$-almost all $x$, where $(\mu, \{\varphi_{x}\}_{x\in \Gu})$ is the pair associated with $\varphi$.
\end{prop}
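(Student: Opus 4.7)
The plan is to combine the integral decomposition $\varphi=\int_{\Gu}\varphi_x\circ\vartheta_x\,d\mu(x)$ recalled just above the statement with Proposition~\ref{prop:e-norm-states}, which says that $\varphi_x$ factors through $C^*_e(\Gxx)$ if and only if $\varphi_x\circ\vartheta_x$ factors through $C^*_r(\G)$. Writing $J:=\ker\bigl(C^*(\G)\to C^*_r(\G)\bigr)$, it is therefore enough to establish the equivalence
$$
\varphi(J)=\{0\}\quad\Longleftrightarrow\quad (\varphi_x\circ\vartheta_x)(J)=\{0\}\ \text{for $\mu$-a.e.}\ x.
$$

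For each $a\in C^*(\G)$, the function $x\mapsto(\varphi_x\circ\vartheta_x)(a)$ is $\mu$-measurable and uniformly bounded by $\|a\|$: this is the defining hypothesis when $a\in C_c(\G)$, and extends to $a\in C^*(\G)$ by uniform approximation since $\|\varphi_x\circ\vartheta_x\|\le 1$. The $\Leftarrow$ direction is then immediate from~\eqref{eq:decomposition}: if $(\varphi_x\circ\vartheta_x)(a)=0$ for a.e. $x$, then integration gives $\varphi(a)=0$.

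For the $\Rightarrow$ direction, suppose $\varphi(J)=\{0\}$. For any positive $a\in J$, the function $x\mapsto(\varphi_x\circ\vartheta_x)(a)$ is nonnegative (as $\varphi_x\circ\vartheta_x$ is a state) and integrates to $\varphi(a)=0$, so it vanishes outside some $\mu$-null set $N_a$. The critical step is to promote this to a single exceptional set. Second countability of $\G$ makes $C_c(\G)$, and hence $C^*(\G)$, $C^*_r(\G)$, and the ideal $J$, separable. Choose a countable subset $\{a_n\}\subset J_+$ that is norm-dense in the positive cone of $J$, and set $N:=\bigcup_n N_{a_n}$. For $x\notin N$, the state $\varphi_x\circ\vartheta_x$ vanishes on every $a_n$, and then by continuity on the whole positive cone of $J$, and finally on $J$ itself by linearity and the self-adjoint decomposition. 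Applying Proposition~\ref{prop:e-norm-states} to each such $x$ yields the conclusion.

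The only genuine obstacle is the separability reduction in the last paragraph, which is exactly where the second countability assumption on $\G$ enters; everything else is a formal manipulation of the decomposition~\eqref{eq:decomposition} together with the characterization already obtained in Proposition~\ref{prop:e-norm-states}.
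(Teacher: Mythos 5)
Your proof is correct and takes essentially the same approach as the paper: both reduce the statement, via Proposition~\ref{prop:e-norm-states}, to the vanishing of $\varphi$ and of the states $\varphi_x\circ\vartheta_x$ on the kernel of $C^*(\G)\to C^*_r(\G)$, use second countability to produce a countable norm-dense subset of the positive part of that kernel, and conclude by integrating nonnegative functions against $\mu$ and taking a countable union of null sets. The only cosmetic difference is that you make explicit the extension of the decomposition~\eqref{eq:decomposition} from $C_c(\G)$ to all of $C^*(\G)$, which the paper leaves implicit.
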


\bp
Let $I$ be the kernel of the canonical map $C^*(\G)\to C^*_r(\G)$. Since $\G$ is second countable, the C$^*$-algebra $C^*(\G)$ is separable. (Indeed, we can cover $\G$ by countably many bisections $U_n$, choose countable subsets of $C_c(U_n)$ that are dense in the supremum-norm, then the sums of finitely many elements in the union of all these subsets will be dense in $C^*(\G)$ by \eqref{eq:norm-bisection} and Lemma~\ref{lem:span}.) Let $F$ be a countable dense subset of~$I_+$. The state $\varphi$ factors through $C^*_r(\G)$ if and only if it vanishes on $F$. One the other hand, by Proposition~\ref{prop:e-norm-states}, the state $\varphi_x$ factors through $C^*_e(\Gxx)$ if and only if $\varphi_x\circ\vartheta_x$ vanishes on $F$. Since
$$
\varphi(a)=\int_\Gu(\varphi_x\circ\vartheta_x)(a)\,d\mu(x)
$$
for all $a\in F$, we get the result.
\ep

\begin{example}\label{ex:28a}
As discussed in the introduction, our main motivation for writing this paper was to characterize the KMS states on $C_{r}^{*}(\G)$ for the time evolutions defined by real-valued $1$-cocycles on~$\G$. A KMS state for such a time evolution on $C^{*}(\G)$ is always given by a pair $(\mu, \{\varphi_{x}\}_{x\in \Gu})$ satisfying some extra conditions \cite{N}*{Theorem 1.3}, and the question was when it factors through $C^*_r(\G)$.

Consider, for instance, the groupoid $\G$ from Example~\ref{ex28} defined by a group $\Gamma$ and a decreasing sequence $(\Gamma_n)^\infty_{n=1}$ of finite index normal subgroups. Take the zero cocycle, so we want to describe the tracial states on $C^*_r(\G)$. Then Proposition~\ref{prop32} and \cite{N}*{Theorem 1.3} imply that such traces are classified by pairs $(\mu,\tau)$, where $\mu$ is a probability measure on $X$ such that $\mu|_{X_n}$ is a scalar multiple of the counting measure for all $n$ and, if $\mu(\infty)>0$, $\tau$ is a tracial state on $C^*_e(\Gamma)=C^*_e(\G^\infty_\infty)$, otherwise $\tau$ is irrelevant. (This description also easily follows from Corollary~\ref{cor:exact-sequence}, since $C^*_r(\G\setminus\G^\infty_\infty)\cong c_0\text{-}\bigoplus^\infty_{n=1}B(\ell^2(\Gamma/\Gamma_n))$.)

Note that there can be many more tracial states on $C^*_e(\Gamma)$ than those that factor through $C^*_r(\Gamma)$, which are the only obvious ones that give rise to tracial states on $C^*_r(\G)$. Consider, for example, the free group $\Gamma=F_{2}$ on two generators, and define the subgroups $(\Gamma_{n})_{n=1}^{\infty}$ as in \cite{W}*{Lemma 2.8}. It then follows from \cite{W}*{Lemma 2.8} and \eqref{eqelimes} that $\lVert \cdot \rVert_{e}$ agrees with the full norm on $\C\Gamma=\C F_{2}$. Therefore there are plenty of tracial states on $C^*_e(\Gamma)$, but only one tracial state on $C^*_r(\Gamma)$, namely, the canonical trace.
\hfill$\diamondsuit$
\end{example}

For completeness, let us describe the GNS-representation defined by the state $\varphi$ given by a pair $(\mu, \{\varphi_{x}\}_{x\in \Gu})$. Let $(H_x,\pi_x,\xi_x)$ be the GNS-triple associated with $\varphi_x\circ\vartheta_x$. Recall that by Lemma~\ref{lem:GNS} the representation $\pi_x$ is induced from the GNS-representation defined by $\varphi_x$.

\begin{lemma}
There is a unique structure of a $\mu$-measurable field of Hilbert spaces on $(H_x)_{x\in\Gu}$ such that the sections $(\pi_x(f)\xi_x)_{x\in\Gu}$ are measurable for all $f\in C_c(\G)$. Namely, a section $(\zeta_x)_{x\in \Gu}$ is measurable if and only if the function $x\mapsto (\zeta_x,\pi_x(f)\xi_x)$ is $\mu$-measurable for all $f\in C_c(\G)$.
\end{lemma}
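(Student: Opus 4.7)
The plan is to realize $(H_x)_{x\in\Gu}$ as the measurable field of Hilbert spaces determined by a countable fundamental family of sections built from the cyclic vectors $\xi_x$, and then invoke the standard correspondence between fundamental families and measurable field structures. Since $\G$ is second countable, $C^*(\G)$ is separable (as observed in the proof of Proposition~\ref{prop32}), so I pick a countable subset $F\subset C_c(\G)$ whose image is norm-dense in $C^*(\G)$, and propose the family $\mathcal{F}:=\{(\pi_x(f)\xi_x)_{x\in\Gu}:f\in F\}$ as the fundamental family.

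Two properties of $\mathcal{F}$ need to be verified. First, for each $x\in\Gu$, the set $\{\pi_x(f)\xi_x:f\in F\}$ is total in $H_x$: indeed, $\pi_x$ extends to a representation of $C^*(\G)$ on $H_x$ with cyclic vector $\xi_x$, and $F$ is norm-dense in $C^*(\G)$. Second, for $f_1,f_2\in F$, the GNS identity gives
\begin{equation*}
(\pi_x(f_1)\xi_x,\pi_x(f_2)\xi_x)=(\varphi_x\circ\vartheta_x)(f_2^**f_1)=\sum_{g\in\Gxx}(f_2^**f_1)(g)\,\varphi_x(u_g),
\end{equation*}
which is $\mu$-measurable in $x$ by the very definition of $\{\varphi_x\}$ being a $\mu$-measurable field of states, applied to $f_2^**f_1\in C_c(\G)$. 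Consequently $\mathcal{F}$ is a fundamental family and endows $(H_x)$ with a $\mu$-measurable field structure in which a section $(\zeta_x)$ is measurable precisely when $x\mapsto(\zeta_x,\pi_x(f)\xi_x)$ is $\mu$-measurable for every $f\in F$. To pass from $F$ to an arbitrary $f\in C_c(\G)$, pick $f_n\in F$ with $\|f_n-f\|\to0$ in $C^*(\G)$; then $\|\pi_x(f_n)\xi_x-\pi_x(f)\xi_x\|\le\|f_n-f\|\to 0$ uniformly in $x$, so $(\zeta_x,\pi_x(f_n)\xi_x)\to(\zeta_x,\pi_x(f)\xi_x)$ pointwise, and measurability of the limit follows. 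In particular, all sections $(\pi_x(f)\xi_x)$ with $f\in C_c(\G)$ are measurable, and the \emph{iff} characterization in the statement holds.

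For uniqueness, suppose a second $\mu$-measurable field structure on $(H_x)$ makes every $(\pi_x(f)\xi_x)$, $f\in C_c(\G)$, a measurable section. Then, in particular, $\mathcal{F}$ is measurable for this structure, and since $\mathcal{F}$ is total in every fiber, the measurable sections for this second structure are exactly those $(\zeta_x)$ with $x\mapsto(\zeta_x,\pi_x(f)\xi_x)$ measurable for all $f\in F$, matching the description of the structure constructed above. The two structures therefore coincide. I do not anticipate a serious obstacle here; the only mildly delicate point is checking fiberwise totality of $\mathcal{F}$, which combines the density of $F$ in $C^*(\G)$ with the cyclicity of~$\xi_x$ for the extended representation of $C^*(\G)$ on $H_x$.
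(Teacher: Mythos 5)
Your proposal is correct and follows essentially the same route as the paper: choose a countable subset of $C_c(\G)$ dense in $C^*(\G)$, use cyclicity of $\xi_x$ for fiberwise totality, apply the standard fundamental-family result (the paper cites \cite{Tak}*{Lemma~IV.8.10}), and extend to arbitrary $f\in C_c(\G)$ by norm approximation $\pi_x(f_n)\xi_x\to\pi_x(f)\xi_x$. Your only addition is to spell out the measurability of the pairwise inner products via $(\pi_x(f_1)\xi_x,\pi_x(f_2)\xi_x)=(\varphi_x\circ\vartheta_x)(f_2^**f_1)$, which the paper leaves implicit in the definition of a $\mu$-measurable field of states.
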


\bp
Choose a sequence $\{f_n\}_n$ of elements of $C_c(\G)$ that is dense in $C^*(\G)$. Then $\{\pi_x(f_n)\xi_x\}_n$ is dense in $H_x$ for every $x\in\Gu$. By \cite{Tak}*{Lemma~IV.8.10} it follows that there is a unique structure of a $\mu$-measurable field of Hilbert spaces on $(H_x)_{x\in\Gu}$ such that the sections $(\pi_x(f_n)\xi_x)_{x\in\Gu}$ are measurable for all $n$, namely, a section $(\zeta_x)_{x\in \Gu}$ is measurable if and only if the function $x\mapsto (\zeta_x,\pi_x(f_n)\xi_x)$ is $\mu$-measurable for all $n\in \mathbb{N}$. To finish the proof of the lemma it is then enough to show that the section $(\pi_x(f)\xi_x)_{x\in\Gu}$ is measurable for every $f\in C_c(\G)$. But this is clearly true, since we can find a subsequence $\{f_{n_k}\}_k$ converging to $f$ in $C^*(\G)$, and then $\pi_x(f_{n_k})\xi_x\to \pi_x(f)\xi_x$ for all $x\in\Gu$.
\ep

We can therefore consider the direct integral~$\pi$ of the GNS-representations $\pi_x$ on
$$
H:=\int^\oplus_{\Gu}H_x\,d\mu(x).
$$
Consider also the vector $\xi:=(\xi_x)_{x\in \Gu}\in H$.

\begin{prop}
With the above notation, $(H,\pi,\xi)$ is a GNS-triple associated with $\varphi$.
\end{prop}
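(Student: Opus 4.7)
The plan is to verify the two defining properties of a GNS-triple: that $(\pi(a)\xi,\xi)=\varphi(a)$ for every $a\in C^*(\G)$, and that $\xi$ is cyclic for $\pi$. The state-reproducing property is immediate from the definition of the direct integral and the decomposition~\eqref{eq:decomposition}:
$$
(\pi(a)\xi,\xi)=\int_{\Gu}(\pi_x(a)\xi_x,\xi_x)\,d\mu(x)=\int_{\Gu}(\varphi_x\circ\vartheta_x)(a)\,d\mu(x)=\varphi(a).
$$
Incidentally, this also shows that $\xi$ is a unit vector, since each $\varphi_x\circ\vartheta_x$ is a state and $\mu$ is a probability measure.

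The substantial point is cyclicity. The first step is to compute how $\pi(h)$ acts on $\xi$ for $h\in C_0(\Gu)\subset C^*(\G)$. Plugging into the definition of the induced representation, together with the explicit form $\xi_x=v^*\xi_{\varphi_x}$ of the cyclic vector from Lemma~\ref{lem:GNS}, one obtains $(\pi_x(h)\xi_x)(g)=h(r(g))\xi_x(g)$ for $g\in\G_x$. Since $\xi_x$ is supported on $\Gxx$, where $r$ takes the constant value $x$, this reduces to $\pi_x(h)\xi_x=h(x)\xi_x$. Hence, for any $f\in C_c(\G)$ and $h\in C_0(\Gu)$,
$$
\pi(fh)\xi=\pi(f)\pi(h)\xi=(h(x)\pi_x(f)\xi_x)_{x\in\Gu}.
$$

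With this in hand, suppose $\zeta=(\zeta_x)_{x\in\Gu}\in H$ is orthogonal to $\pi(C^*(\G))\xi$. Orthogonality against each $\pi(fh)\xi$ yields
$$
0=(\zeta,\pi(fh)\xi)=\int_{\Gu}h(x)(\zeta_x,\pi_x(f)\xi_x)\,d\mu(x)
$$
for every $h\in C_0(\Gu)$, forcing the function $x\mapsto(\zeta_x,\pi_x(f)\xi_x)$ to vanish $\mu$-a.e.\ for each fixed $f\in C_c(\G)$. Choosing a countable subset $\{f_n\}\subset C_c(\G)$ dense in $C^*(\G)$, we obtain a single $\mu$-null set $N$ outside of which $(\zeta_x,\pi_x(f_n)\xi_x)=0$ for all $n$. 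Since $\xi_x$ is cyclic for $\pi_x$, the family $\{\pi_x(f_n)\xi_x\}_n$ is dense in $H_x$, so $\zeta_x=0$ for $x\notin N$, i.e.\ $\zeta=0$ in $H$. The main delicate point is the identity $\pi_x(h)\xi_x=h(x)\xi_x$, which depends on the explicit description of the cyclic vector of the induced representation; everything else is a routine application of the measurable field structure introduced in the preceding lemma.
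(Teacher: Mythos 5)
Your proof is correct, and it closes out the cyclicity argument by a genuinely different (more elementary) route than the paper. Both arguments pivot on the same key identity --- that $C_0(\Gu)$ acts diagonally on the distinguished section, $\pi(h)\xi=(h(x)\xi_x)_{x}$, which you justify correctly from the fact that $\xi_x=v^*\xi_{\varphi_x}$ is supported on $\Gxx$, where $r\equiv x$. The paper then argues operator-theoretically: the subspace $\overline{\pi(C^*(\G))\xi}$ is invariant under the diagonal operators $m(L^\infty(\Gu,\mu))$, so the projection $P$ onto it lies in their commutant and is therefore decomposable, $P=(P_x)_x$; fiberwise cyclicity of $\xi_x$ then forces $P_x=1$ a.e. You instead take $\zeta\perp\pi(C^*(\G))\xi$ and kill it directly: testing against the vectors $\pi(fh)\xi$ for all $h\in C_0(\Gu)$ shows that the integrable function $x\mapsto(\zeta_x,\pi_x(f)\xi_x)$ integrates to zero against every element of $C_0(\Gu)$, hence vanishes $\mu$-a.e.\ --- a step that tacitly uses regularity of $\mu$ and uniqueness in the Riesz representation theorem, which would be worth naming --- and a countable dense family $\{f_n\}\subset C_c(\G)$, available by second countability and already set up in the preceding lemma, then yields $\zeta_x=0$ off a single null set. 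Your version avoids the structure theorem that the commutant of the diagonal algebra consists exactly of decomposable operators (which the paper invokes implicitly when asserting $P$ is decomposable), at the price of the explicit $L^1$-duality argument; the paper's proof is shorter once that theorem is granted, and both proofs use the countable dense family and the fiberwise cyclicity of $\xi_x$ in the same way at the final step. Your verification of the reproducing property $(\pi(\cdot)\xi,\xi)=\varphi$, which the paper treats as immediate, is also fine.
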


\bp
We only need to show that the vector $\xi$ is cyclic. Consider the representation $m$ of $L^\infty(\Gu,\mu)$ on $H=\int^\oplus_{\Gu}H_x\,d\mu(x)$ by diagonal operators. As $\pi_x(f)\xi_x=f(x)\xi_x$ for all $f\in C_0(\Gu)$ and $x\in\Gu$, we then have
$$
\pi(f)\xi=m(f)\xi\quad\text{for all}\quad f\in C_0(\Gu).
$$
It follows that for all $a\in C^*(G)$ and $f\in C_0(\Gu)$ we have
$$
m(f)\pi(a)\xi=\pi(a)m(f)\xi=\pi(af)\xi.
$$
Therefore the subspace $\overline{\pi(C^*(\G))\xi}$ is invariant under the diagonal operators, hence the projection~$P$ onto this subspace is a decomposable operator. Thus, $P=(P_x)_{x\in \Gu}$ for a measurable family of projections $P_x$. For $\mu$-almost all $x$, we also have that the projection $P_x$ commutes with $\pi_x(C^*(G))$ and $P_x\xi_x=\xi_x$. Since the vector $\xi_x$ is cyclic, it follows that $P_x=1$ a.e., hence $P=1$.
\ep

\begin{remark}\label{rem:decomposition}
The above proposition can be approached from a different angle by trying to disintegrate the GNS-representation of a given state $\varphi$. This leads to a proof of decomposition~\eqref{eq:decomposition} that relies only on disintegration of representations rather than on Renault's disintegration theorem for groupoids~\cite{Ren}, which is a more delicate result, especially in the non-Hausdorff setting. Let us sketch this argument, which elaborates on the discussion in~\cite{N}*{Section~2}.

So, assume we have a state $\varphi$ on $C^{*}(\G)$ with $C_{0}(\Gu )$ in its centralizer. Let $(H,\pi,\xi)$ be the corresponding GNS-triple. By \cite{N}*{Lemma~2.2} and its proof, there is a representation $\rho\colon C_0(\Gu)\to\pi(C^*(\G))'\subset B(H)$ such that $\rho(h)\pi(a)\xi=\pi(ah)\xi$ for all $h\in C_0(\Gu)$ and $a\in C^*(\G)$. We can then disintegrate $\pi$ with respect to $\rho(C_0(\Gu))$, so that $\pi=\int^\bigoplus_{\Gu}\pi_x\,d\nu(x)$ for representations $\pi_x\colon C^*(\G)\to B(H_x)$ and $\rho$ becomes the representation by diagonal operators. If $\xi=(\xi_x)_x$, we define a probability measure $\mu$ on $\Gu$ by $d\mu(x)=\|\xi_x\|^2d\nu(x)$ and, for $\mu$-almost all $x\in\Gu$, states~$\psi_x$ on $C^*(\G)$ by $\psi_x=\|\xi_x\|^{-2}(\pi_x(\cdot)\xi_x,\xi_x)$. Then $\varphi=\int_\Gu\psi_x\,d\mu(x)$.

Since $\pi(h)\xi=\rho(h)\xi$, we have $\pi_x(h)\xi_x=h(x)\xi_x$, and hence $\psi_x(ah)=\psi_x(ha)=h(x)\psi_x(a)$, for $\mu$-a.e.~$x$ and all $h\in C_0(\Gu)$ and $a\in C^*(\G)$. By the analogue of~\eqref{eq:f0} for the full norm, which is proved using the same arguments, we conclude that, for $\mu$-a.e.~$x$ and all $f\in C_c(\G)$, the value $\psi_x(f)$ depends only on $\eta_x(f)$, so that $\psi_x=\varphi_x\circ\eta_x$ on $C_c(\G)$ for a linear functional $\varphi_x$ on $C_c(\Gxx)$. This linear functional is positive, hence it extends to a state on $C^*(\Gxx)$, since by Lemma~\ref{lem21}, for every $h\in C_c(\Gxx)$, we can find $f\in C_c(\G)$ such that $h^**h=\eta_x(f^**f)$.
\hfill$\diamondsuit$
\end{remark}

Proposition~\ref{prop32} can be extended to a class of weights on $C^*(\G)$. Let us first recall some terminology. A weight $\varphi$ on a C$^*$-algebra $A$ is a map $\varphi\colon  A_{+} \to [0, +\infty]$ satisfying $\varphi(a+b)=\varphi(a)+\varphi(b)$ and $\varphi(\lambda a)=\lambda \varphi(a)$ for all $a, b\in A_{+}$ and $\lambda\ge0$. A weight $\varphi$ is called densely defined when $\{a\in A_{+} \ | \ \varphi(a) < \infty\}$ is dense in $A_{+}$, and it is called lower semi-continuous if $\{ a\in A_{+} \ | \ \varphi(a)\leq \lambda \}$ is closed for all $\lambda \geq 0$. We call $\varphi$ \emph{proper} if $\varphi$ is nonzero, densely defined and lower semi-continuous. We use the standard notation
$$
\mathfrak{N}_{\varphi} := \{ a\in A \ | \ \varphi(a^{*}a) < \infty\}\qquad\text{and}\qquad \mathfrak{M}_{\varphi}^{+}: = \{a\in A_{+} \ | \ \varphi(a) < \infty \}.
$$
Any weight $\varphi$ extends uniquely to a linear functional on the subspace $\mathfrak{M}_{\varphi}:=\mathfrak{N}_{\varphi}^*\mathfrak{N}_{\varphi}=\text{span} \ \mathfrak{M}_{\varphi}^{+}$, which is a dense $*$-subalgebra of $A$ if $\varphi$ is densely defined.

Let $\{ \varphi_{x}\}_{x\in \Gu}$ be a $\mu$-measurable field of states for a non-zero Radon measure $\mu$ on $\Gu$. Then the map $x\mapsto (\varphi_x\circ\vartheta_x)(a)$  is $\mu$-measurable for all $a\in C^{*}(\G)_{+}$, and hence we can define a map $\varphi\colon  C^{*}(\G)_{+} \to [0, +\infty]$ by
\begin{equation} \label{eqweight}
\varphi(a): = \int_{\Gu} (\varphi_x\circ\vartheta_x)(a) \, d\mu(x) \quad \text{for}\quad a\in C^{*}(\G)_{+} .
\end{equation}
Since $\G$ can be covered by bisections and $\mu$ is Radon, it follows from Lemma \ref{lem:span} that $\varphi$ is finite on~$C_{c}(\G)_{+}$. By Fatou's lemma it is also lower semi-continuous. It follows that $\varphi$ is a proper weight on $C^{*}(\G)$, with $C_{c}(\G)\subset \mathfrak{M}_{\varphi}$. In conclusion we can associate a proper weight $\varphi$ on $C^{*}(\G)$ to any pair $( \mu  , \{ \varphi_{x}\}_{x\in \Gu})$ consisting of a Radon measure $\mu$ on $\Gu$ and a $\mu$-measurable field of states $\{ \varphi_{x}\}_{x\in \Gu}$. We say that the weight $\varphi$ factors through $C_{r}^{*}(\G)$ if there exists a proper weight $\tilde{\varphi}$ on~$C_{r}^{*}(\G)$ such that $\varphi(a)=\tilde{\varphi}(\pi(a))$ for all $a\in C^{*}(\G)_{+}$, where $\pi\colon C^*(\G)\to C^*_r(\G)$ is the canonical surjection.

\begin{prop} \label{propwft}
Let $\G$ be a second countable locally compact \'etale groupoid and $\varphi$ be the proper weight on $C^{*}(\G)$ associated to a pair $( \mu  , \{ \varphi_{x}\}_{x\in \Gu})$ consisting of a Radon measure $\mu$ and a $\mu$-measurable field of states $\{ \varphi_{x}\}_{x\in \Gu}$. Then the following conditions are equivalent:
\begin{enumerate}
\item \label{iw1}$\varphi$ factors through $C_{r}^{*}(\G)$;
\item \label{iw2}$\varphi((\ker\pi)_{+})=0$;
\item \label{iw3} $\varphi_x$ factors through $C^*_e(\Gxx)$ for $\mu$-almost all $x$.
\end{enumerate}
\end{prop}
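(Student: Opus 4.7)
The plan is to prove $(1)\Rightarrow(2)$, $(2)\Leftrightarrow(3)$, and $(3)\Rightarrow(1)$. The first implication is immediate: if $\tilde\varphi\circ\pi=\varphi$, then $\varphi$ vanishes on $(\ker\pi)_+$ since $\tilde\varphi(0)=0$.

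For $(2)\Leftrightarrow(3)$ I would adapt the proof of Proposition~\ref{prop32}. Since $\G$ is second countable, $C^*(\G)$ is separable, so pick a countable dense subset $F$ of $(\ker\pi)_+$. If $(3)$ holds, Proposition~\ref{prop:e-norm-states} gives that $\varphi_x\circ\vartheta_x$ vanishes on $\ker\pi$ for $\mu$-a.e.~$x$, so integrating via~\eqref{eqweight} forces $\varphi((\ker\pi)_+)=0$. Conversely, if $(2)$ holds, then for each $a\in F$ we have $(\varphi_x\circ\vartheta_x)(a)=0$ off a $\mu$-null set $N_a$; off $\bigcup_{a\in F}N_a$ the positive functional $\varphi_x\circ\vartheta_x$ vanishes on $F$ and hence on all of $\ker\pi$ by norm-continuity, so Proposition~\ref{prop:e-norm-states} yields $(3)$.

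The substantive implication is $(3)\Rightarrow(1)$, which I would prove by constructing $\tilde\varphi$ via integration. For each $x$ in the $\mu$-conull set where $\varphi_x$ factors through $C^*_e(\Gxx)$, Proposition~\ref{prop:e-norm-states} furnishes a unique state $\tilde\psi_x$ on $C^*_r(\G)$ with $\tilde\psi_x\circ\pi=\varphi_x\circ\vartheta_x$; set $\tilde\psi_x:=0$ on the exceptional null set. Then define
\begin{equation*}
\tilde\varphi(b):=\int_{\Gu}\tilde\psi_x(b)\,d\mu(x),\qquad b\in C^*_r(\G)_+.
\end{equation*}
For any positive lift $a\in C^*(\G)_+$ of $b$ (which exists since $\pi$ is a surjective $*$-homomorphism), the integrand equals $(\varphi_x\circ\vartheta_x)(a)$ for $\mu$-a.e.~$x$, and measurability of $x\mapsto(\varphi_x\circ\vartheta_x)(a)$ extends from $a\in C_c(\G)$ to $a\in C^*(\G)$ by density and norm-continuity of each $\varphi_x\circ\vartheta_x$. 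The identity $\tilde\varphi\circ\pi=\varphi$ is then immediate.

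To finish I would verify that $\tilde\varphi$ is a proper weight. Additivity and positive homogeneity are inherited from the $\tilde\psi_x$, and lower semi-continuity follows from Fatou's lemma together with norm-continuity of each $\tilde\psi_x$. Non-triviality is immediate from $\tilde\varphi\circ\pi=\varphi$ and $\varphi\neq 0$, while dense definedness follows because $\pi(\mathfrak{M}_\varphi^+)\subset\mathfrak{M}_{\tilde\varphi}^+$, and $\pi(\mathfrak{M}_\varphi^+)$ is dense in $C^*_r(\G)_+$ as the image under the surjective $*$-homomorphism $\pi$ of the dense subset $\mathfrak{M}_\varphi^+$ of $C^*(\G)_+$. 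The main technical point is this last step: checking measurability of the integrand and then verifying each of the proper-weight axioms for the integral-defined functional $\tilde\varphi$.
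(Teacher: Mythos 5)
Your proposal is correct, and its skeleton matches the paper's: the implication \eqref{iw1}$\Rightarrow$\eqref{iw2} is trivial in both, and for \eqref{iw3}$\Rightarrow$\eqref{iw1} you build the same weight as the paper --- your $\tilde\psi_x$ is exactly $\varphi_x\circ\vartheta_{x,e}$, since the state factored through $\pi$ is unique --- so that $\tilde\varphi=\int_{\Gu}(\varphi_x\circ\vartheta_{x,e})(\cdot)\,d\mu(x)$ and $\tilde\varphi\circ\pi=\varphi$. The one place you genuinely diverge is \eqref{iw2}$\Rightarrow$\eqref{iw3}: the paper exhausts $\Gu$ by open sets $V_n$ with compact closures, notes that the positive functionals $\psi_n$ attached to the finite measures $\mu|_{V_n}$ are dominated by $\varphi$, and then cites Proposition~\ref{prop32} as a black box; you instead inline the proof of Proposition~\ref{prop32}, choosing a countable dense subset $F\subset(\ker\pi)_+$ and using that a nonnegative measurable integrand with vanishing integral vanishes $\mu$-a.e., an argument that works verbatim for an infinite Radon measure and so makes the truncation unnecessary. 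The paper's route is shorter because it reuses an existing result; yours is more self-contained and makes explicit that finiteness of $\mu$ plays no role. You also supply details the paper compresses into ``similarly to $\varphi$'': measurability of $x\mapsto\tilde\psi_x(b)$ via a positive lift $a\in C^*(\G)_+$ of $b$ (such lifts exist, e.g.\ $b=\pi(a')$ gives $b=\pi(a'^*a')$ after lifting $b^{1/2}$), lower semi-continuity via Fatou, and dense definedness from $\pi(\mathfrak{M}_\varphi^+)\subset\mathfrak{M}_{\tilde\varphi}^+$ together with $\pi(C^*(\G)_+)=C^*_r(\G)_+$; all of these verifications are valid, with the uniform bound $\|\varphi_x\circ\vartheta_x\|\le1$ justifying the pointwise-limit argument for measurability.
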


\bp
That \eqref{iw1} implies \eqref{iw2} is clear. Assume now that \eqref{iw2} is true. Let $\{V_{n}\}^\infty_{n=1}$ be an increasing sequence of open sets in $\Gu$ with compact closures such that $\Gu = \cup^\infty_{n=1}V_{n}$ . Since $\mu$ is a Radon measure, the measures $\mu_{n}:=\mu|_{V_{n}}$ are all finite regular Borel measures on $\Gu$. Let $\psi_{n}$ be the positive linear functional on $C^*(\G)$ associated to $(\mu_{n}, \{\varphi_{x}\}_{x \in \Gu} )$. It then follows from \eqref{eqweight} that for any $a\in C^{*}(\G)_{+}$ we have
$$
\varphi(a) = \int_{\Gu} (\varphi_x\circ\vartheta_x)(a)\,d\mu(x) \geq  \int_{V_{n}} (\varphi_x\circ\vartheta_x)(a)\,d\mu(x) = \psi_{n}(a)  .
$$
By assumption this must imply that $\psi_{n} ((\ker\pi)_{+})=0$ for all $n\in \mathbb{N}$, and hence \eqref{iw3} is true by Proposition \ref{prop32}.

Assume now that \eqref{iw3} is true. Then similarly to $\varphi$ we can define a proper weight $\tilde\varphi$ on $C^*_r(\G)$ by
$$
\tilde\varphi(a): = \int_{\Gu} (\varphi_x\circ\vartheta_{x,e})(a) \, d\mu(x) \quad \text{for}\quad a\in C^{*}_r(\G)_{+},
$$
where we view $\varphi_x$ as a state on $C^*_e(\Gxx)$ whenever it makes sense, which is the case for $\mu$-a.e.~$x$ by assumption. Since $\varphi_x\circ\vartheta_x=\varphi_x\circ\vartheta_{x,e}\circ\pi$, we obviously have $\varphi(a)=\tilde{\varphi}(\pi(a))$ for all $a\in C^{*}(\G)_{+}$. This completes the proof.
\ep

\begin{remark}
There is no known analogue of decomposition~\eqref{eq:decomposition} for arbitrary proper weights with centralizer containing $C_0(\Gu)$. If one assumes that $\G$ is Hausdorff and restricts attention to KMS weights with respect to the time evolution defined by a real-valued $1$-cocycle on $\G$, then it follows from Theorem~6.3 in \cite{C} that every KMS weight is given by a pair $(\mu, \{\varphi_{x}\}_{x\in \Gu})$ (satisfying some extra conditions), and hence Proposition~\ref{propwft} can be used to describe all KMS weights on~$C^{*}_{r}(\G)$ in this setting.
\end{remark}

\section{Graded groupoids}\label{section5}

Given a locally compact \'etale groupoid $\G$ and a discrete group $\Gamma$, by a $\Gamma$-valued $1$-cocycle on~$\G$ we mean a continuous homomorphism $\Phi\colon\G\to\Gamma$. We then say that $\Phi$ defines a $\Gamma$-grading on~$\G$. In this section we prove several extensions of Proposition~\ref{prop:transformation-groupoids} of the form that if $\G$ is a graded groupoid satisfying some extra assumptions, then the norm $\lVert\cdot\rVert_e$ coincides with the reduced norm.

\smallskip

Let us first give a few examples of graded groupoids.

\begin{example}[Semidirect products]\label{ex:semidirect}
Assume a discrete group $\Gamma$ acts by automorphisms on a locally compact \'etale groupoid $\G$. Then the semidirect product $\Gamma\ltimes\G$ is the space $\Gamma\times\G$ with product
$$
(\gamma_2,g_2)(\gamma_1,g_1)=(\gamma_2\gamma_1,\gamma_1^{-1}(g_2)g_1).
$$
Therefore the unit space of $\Gamma\times\G$ is $\{e\}\times\Gu$, and if we identify it with $\Gu$, then the range and source maps are
$$
r(\gamma,g)=r(\gamma(g)),\qquad s(\gamma,g)=s(g).
$$
Equipped with the product topology, $\Gamma\ltimes\G$ becomes a locally compact \'etale groupoid. The map $\Phi\colon \Gamma\ltimes\G\to\Gamma$, $\Phi(\gamma,g)=\gamma$, defines a grading on $\Gamma\ltimes\G$.

Note that in general the isotropy groups $\Gamma\ltimes\G$ are not determined by those of $\G$ and the stabilizers of the $\Gamma$-action, but for every $x\in\Gu$ we at least have $(\Gamma\ltimes\G)^x_x\cap\ker\Phi=\{e\}\times\Gxx\cong\Gxx$, so that $(\Gamma\ltimes\G)^x_x$ is an extension of a subgroup of $\Gamma$ by $\Gxx$.
\hfill$\diamondsuit$
\end{example}

\begin{example}[Partial group actions]\label{ex:partial}
Given a discrete group $\Gamma$ and a locally compact Hausdorff space~$X$, a partial action of $\Gamma$ on $X$ is given by the following data. For every $\gamma\in\Gamma$, we are given open subsets $X_{\gamma^{-1}}$ and $X_\gamma$ of $X$ and a homeomorphism $X_{\gamma^{-1}}\to X_\gamma$, which we denote simply by~$\gamma$, such that $X_e=X$ and, for all $\gamma_1,\gamma_2\in\Gamma$, we have $(\gamma_1\gamma_2)x=\gamma_1(\gamma_2x)$ whenever the right hand side is well-defined (in other words, the homeomorphism $\gamma_1\gamma_2$ is an extension of the composition of~$\gamma_1$ and~$\gamma_2$).

Every partial action of $\Gamma$ defines a transformation-type groupoid $\G$~\cite{Ab2}:
$$
\G=\{(y,\gamma,x)\in X\times\Gamma\times X: y=\gamma x\},
$$
with the obvious product
\begin{equation}\label{eq:product}
(z,\gamma_2,y)(y,\gamma_1,x)=(z,\gamma_2\gamma_1,x).
\end{equation}
Equipped with the topology inherited from the product topology on $X\times\Gamma\times X$, the groupoid $\G$ becomes a locally compact Hausdorff \'etale groupoid. The map $\Phi\colon \G\to\Gamma$, $\Phi(y,\gamma,x)=\gamma$, defines a $\Gamma$-grading on $\G$. This grading is \emph{injective} in the sense that $\ker(\Phi|_{\Gxx})=\{x\}$ for all $x\in\Gu$.

\smallskip

The simplest way of getting a partial action of $\Gamma$ is to start with a genuine action on a locally compact Hausdorff space $Y$ containing $X$ as an open subset. We then get a partial action of~$\Gamma$ on~$X$ by restriction, so that $X_\gamma=X\cap\gamma(X)$. In this case the groupoid $\G$ of the partial action is simply the reduction of the transformation groupoid $\Gamma\times Y$ by $X$. But not all partial actions can be obtained this way, at least if we require $Y$ to be Hausdorff. A necessary condition for a partial action to arise as a reduction of a genuine action is that the graphs $\{(x,\gamma x): x\in X_{\gamma^{-1}}\}$ must be closed in $X\times X$ for all $\gamma\in\Gamma$. This condition is actually also sufficient, see~\cite{Ab}*{Proposition~2.10 and Remark 2.11}.

As an example, assume we are given partial homeomorphisms $S_1$ and $S_2$ on $X$, so that we are given open subsets $A_i$ and $B_i$ of $X$ ($i=1,2$) and homeomorphisms $S_i\colon A_i\to B_i$. From this we can construct a partial action of the free group $F_2$ with two generators $s_1$ and $s_2$. Namely, given a reduced nonempty word $\gamma=s_{i_1}^{a_1}\dots s_{i_n}^{a_n}\in F_2$, the action of $\gamma$ is given by $S_{i_1}^{a_1}\dots S_{i_n}^{a_n}$. If the graph of~$S_1$ or~$S_2$ is not closed in $X\times X$, this partial action is not a reduction of a genuine action.
\hfill$\diamondsuit$
\end{example}

\begin{example}[Generalized Deaconu--Renault groupoids]\label{ex:DR0}
We recall the construction of Exel and Renault~\cite{ER} generalizing the Deaconu--Renault groupoids of local homeomorphisms~\citelist{\cite{Rbook}\cite{D}}.

Let $\Gamma$ be a discrete group and $S\subset \Gamma$ be a submonoid such that $SS^{-1}\subset S^{-1}S$. Assume $S$ acts by local homeomorphisms on a locally compact Hausdorff space $X$. We define a groupoid by
\begin{equation}\label{eq:DR}
\G:=\{(y,\gamma,x)\in X\times \Gamma\times X:\exists\, s,t\in S\ \text{such that}\ \gamma=s^{-1}t\ \ \text{and}\ sy=tx\}.
\end{equation}
The product is defined by~\eqref{eq:product}. The sets
$$
\{(y,s^{-1}t,x):x\in A,\ y\in B, sy=tx\},
$$
where $A,B\subset X$ are open and $s,t\in S$, form a basis of topology on $\G$. This topology is stronger than, and in general different from, the one induced by the product topology on $X\times\Gamma\times X$. The map $\Phi\colon\G\to\Gamma$, $\Phi(y,\gamma,x)=\gamma$, defines an injective $\Gamma$-grading on~$\G$.

\smallskip

In most concrete examples of this construction studied in the literature the group $\Gamma$ is abelian, which is not particularly interesting for our purposes, so let us give a class of examples with potentially more complicated isotropy groups.

Assume $G$ is a discrete group, $\theta\in\operatorname{Aut}(G)$, and consider the group $\Gamma:=\Z\ltimes_\theta G$. Consider the submonoid  $S:=\Z_+\ltimes_\theta G\subset \Gamma$. Then $SS^{-1}=S^{-1}S=\Gamma$. Assume we have an action of $S$ on $X$ by local homeomorphisms. Then the corresponding groupoid is
\begin{equation}\label{eq:generalizedER}
\G=\{(y,k-l,g,x)\in X\times(\Z\ltimes_\theta G)\times X:k,l\ge0,\ \sigma^l(y)=\sigma^k(gx)\},
\end{equation}
where $\sigma$ is the local homeomorphism defined by the action of $1\in\Z_+$.

Note that to have an action of $S$ by local homeomorphisms is the same as having a local homeomorphism $\sigma\colon X\to X$ and an action of~$G$ on~$X$ by homeomorphisms such that $\sigma(gx)=\theta(g)\sigma(x)$ for all $g\in G$ and $x\in X$. A simple, but possibly not the most exciting, way of producing such examples is to consider a local homeomorphism $\sigma_1\colon X_1\to X_1$ and an action of $\Gamma$ on $X_2$, and then consider the local homeomorphism $\sigma=(\sigma_1,1)$ on $X:=X_1\times X_2$ together with the action of~$G$ on the second factor of $X$.
\hfill$\diamondsuit$
\end{example}

In some cases a graded groupoid can be transformed into a new injectively graded one.

\begin{example}\label{ex:DR}
Let $\G$ be a locally compact \'etale groupoid with a grading $\Psi\colon\G\to\Gamma$. Assume that the following condition is satisfied: if $\Psi(g)=e$ for some $g\in\Gxx$ and $x\in\Gu$, then $r=s$ in a neighbourhood of $g$. We then construct a new groupoid $\G_\Psi$ as follows.

Put $X:=\Gu$ and consider the set
$$
\G_\Psi:=\{(y,\gamma,x)\in X\times\Gamma\times X: \exists\, g\in\G^y_x\ \text{such that}\ \Psi(g)=\gamma\}.
$$
The product is again defined by~\eqref{eq:product}.

To define a topology on $\G_\Psi$, take a bisection $U$ of $\G$ and consider the set
$$
U_\Psi:=\{(r(g),\Psi(g),s(g)): g\in U\}.
$$
We claim that these sets form a basis of a topology on $\G_\Psi$. It is clear that the sets $U_\Psi$ cover $\G_\Psi$. Assume now that $U$ and $V$ are two bisections of $\G$ and $(y,\gamma,x)\in U_\Psi\cap V_\Psi$. Let $g\in U$ and $h\in V$ be the unique elements such that $g,h\in\G^y_x$. Then $g^{-1}h\in\Gxx$ and $\Psi(g^{-1}h)=e$. By our assumption, there is a bisection $O$ containing $g^{-1}h$ such that $r=s$ on $O$ and $\Psi(O)=\{e\}$. Note that $UO$ (the set of pairwise products) is a bisection and $gO=h$. Hence we can find an open neighbourhood $W$ of $g$ such that $W\subset U$, $s(W)\subset r(O)$ and $h\in WO\subset V$. Since $r(kO)=r(k)$, $s(kO)=s(k)$ and $\Psi(kO)=\Psi(k)$ for all $k\in W$, we then have $(y,\gamma,x)\in W_\Psi\subset U_\Psi\cap V_\Psi$.

Observe next that the open sets $U_\Psi$ are bisections. It is then easy to see that $\G_\Psi$ is a locally compact Hausdorff \'etale groupoid. The map $\Phi\colon\G_\Psi\to\Gamma$, $\Phi(y,\gamma,x)=\gamma$, defines an injective $\Gamma$-grading on~$\G_\Psi$. For the isotropy groups we have $(\G_\Psi)^x_x\cong\Psi(\Gxx)$.

\smallskip

The above construction can be viewed as a generalization of Example~\ref{ex:DR0}. Indeed, consider an action of $S\subset\Gamma$ on $X$ by local homeomorphisms as in that example. For every $s\in S$, $s\ne e$, we can cover $X$ by open sets $U_{s,i}$ such that $s|_{U_{s,i}}$ is a homeomorphism onto $sU_{s,i}$. Consider the free group~$F$ with generators $g_{s,i}$ for all $s$ and $i$. We can define a partial action of $F$ on $X$ similarly to Example~\ref{ex:partial}, with $g_{s,i}$ acting by $s|_{U_{s,i}}$. Let $\G$ be the corresponding groupoid. It is $F$-graded, but we rather view it as $\Gamma$-graded using the homomorphism $\phi\colon F\to\Gamma$ such that $\phi(g_{s,i})=s$ for all $s$ and $i$.

To check that our assumption on the corresponding cocycle $\Psi\colon\G\to\Gamma$ is satisfied, assume $g_{s_1,i_1}^{a_1}\dots g_{s_n,i_n}^{a_n}x=x$ for some $x\in X$ and $g_{s_1,i_1}^{a_1}\dots g_{s_n,i_n}^{a_n}\in F$ ($a_j=\pm1$) such that $s_{1}^{a_1}\dots s_{n}^{a_n}=e$. We have to show that $g_{s_1,i_1}^{a_1}\dots g_{s_n,i_n}^{a_n}y=y$ for all $y$ close to $x$. By the assumption $SS^{-1}\subset S^{-1}S$, we can write $st^{-1}$ for any given $s,t\in S$ as $p^{-1}q$, which implies that for any indices $i,j$ and any point $y\in X$ we can find indices $k,l$ such that $g_{s,i}g_{t,j}^{-1}z=g_{p,k}^{-1}g_{q,l}z$ for all $z$ close to $y$. Using this property repeatedly, we can move all the negative powers to the left, that is, without loss of generality we may assume that $a_1=\dots=a_m=-1$ and $a_{m+1}=\dots=a_n=1$ for some~$m$. But then for
$$
s:=s_{m+1}\dots s_n=s_m\dots s_1
$$
we have $sg_{s_1,i_1}^{a_1}\dots g_{s_n,i_n}^{a_n}y=sy$, and since $s$ acts by a local homeomorphism, we conclude that $g_{s_1,i_1}^{a_1}\dots g_{s_n,i_n}^{a_n}y=y$ for all $y$ close to $x$. It is now straightforward to check that the groupoid $\G_\Psi$, with its topology, is exactly the groupoid~\eqref{eq:DR}.

\smallskip

Consider a related example inspired by \cite{HL}*{Example~3.3}. Let $\G$ be the groupoid~\eqref{eq:generalizedER} defined by an action of $\Z_+\ltimes_\theta G$ on $X$ by local homeomorphisms. Take any discrete group $\Gamma$ together with a homomorphism $\phi\colon G\to\Gamma$ such that
$$
\Omega:=\{\gamma\in\Gamma: \phi(g)\gamma=\gamma\phi(\theta(g))\ \text{for all}\ g\in G\}\ne\emptyset.
$$
For any continuous $G$-invariant function $d\colon X\to\Omega$, we can then define a $1$-cocycle $\Psi\colon \G\to\Gamma$ by
$$
\Psi(y,k-l,g,x):=d(y)d(\sigma(y))\dots d(\sigma^{l-1}(y))d(\sigma^{k-1}(x))^{-1}\dots d(\sigma(x))^{-1}d(x)^{-1}\phi(g),
$$
if $\sigma^l(y)=\sigma^k(gx)$. Our condition on $\Psi$ reads as follows: if $\sigma^l(x)=\sigma^k(gx)$ and $\Psi(x,k-l,g,x)=e$, then $\sigma^l(y)=\sigma^k(gy)$ for all $y$ close to $x$. If it is satisfied, we get an injectively $\Gamma$-graded locally compact Hausdorff \'etale groupoid $\G_\Psi$.
For $G=\{e\}$ this groupoid coincides with the one defined in \cite{HL}*{Example~3.3}.
\hfill$\diamondsuit$
\end{example}

\begin{thm}\label{thm:e=r}
Assume $\Gamma$ is a discrete group and $\G$ is a $\Gamma$-graded locally compact \'etale groupoid, with grading $\Phi\colon\G\to\Gamma$. For a fixed $x\in\Gu$, assume the group $\ker(\Phi|_{\Gxx})$ is amenable and $\Phi(\Gxx)$ is exact. Then $\lVert\cdot\rVert_e=\lVert\cdot\rVert_r$ on $\C\Gxx$.
\end{thm}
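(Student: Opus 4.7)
Set $H := \Gxx$, $N := \ker(\Phi|_H) \triangleleft H$ and $Q := \Phi(H) = H/N$, giving a short exact sequence
\[
1 \to N \to H \to Q \to 1
\]
with $N$ amenable and $Q$ exact. Since $\|\cdot\|_e \ge \|\cdot\|_r$ is automatic, only the reverse inequality needs proof; by Proposition~\ref{prop:e-charact} this is equivalent to showing that every representation $\rho$ of $C^*(H)$ for which $\Ind\rho$ factors through $C^*_r(\G)$ must already factor through $C^*_r(H)$.

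The plan is to exploit the $\Gamma$-grading $\Phi$ in two directions. On the groupoid side, the grading gives $C^*_r(\G)$ a Fell bundle structure whose fibres $B_\gamma$ are the closures of $C_c(\Phi^{-1}(\gamma))$, and the Hilbert space $L$ of $\Ind\rho$ decomposes correspondingly as $L = \bigoplus_{\gamma Q \in \Gamma/Q} L^{\gamma Q}$. The summands are indexed by the $H$-orbits in $\G_x$ (whose $\Phi$-images lie in fixed cosets of $Q$), and for $f \in C_c(\Phi^{-1}(\delta))$ the operator $(\Ind\rho)(f)$ shifts $L^{\gamma Q}$ to $L^{\delta\gamma Q}$. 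On the isotropy side, amenability of $N$ gives $C^*_r(N) = C^*(N)$ and realises $C^*_r(H)$ as a twisted reduced crossed product $C^*(N) \rtimes_{r,\tau} Q$; accordingly $\rho$ corresponds to a covariant pair $(\rho|_N, u)$, and amenability of $N$ forces $\rho|_N \prec \lambda_N$ automatically.

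Choosing a section $\{s_q\}_{q \in Q} \subset H$ of $H \to Q$, each $L^{\gamma Q}$ is identified, as a $C^*(\G_e)$-module where $\G_e := \Phi^{-1}(e)$, with (a multiple of) the induced representation $\Ind_N^{\G_e}(\rho|_N)$. By the amenability of $N$, this is weakly contained in the regular representation of $\G_e$ at $x$, and consequently the restriction $(\Ind\rho)|_{C^*(\G_e)}$ factors through $C^*_r(\G_e)$ \emph{automatically}, with no appeal to the hypothesis on $\Ind\rho$. Thus the nontrivial content of the assumption that $\Ind\rho$ factors through $C^*_r(\G)$ is encoded in how the fibres $B_\delta$ for $\delta \in Q$ act between the $L^{\gamma Q}$'s, which is precisely the $Q$-part of the Fell bundle, and the plan is to use exactness of $Q$ to transfer this information into a reduced-norm bound on the covariant pair $(\rho|_N, u)$, which is exactly the condition that $\rho$ factor through $C^*_r(H)$.

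The \textbf{main obstacle} is carrying out this last descent rigorously: one must pass from weak-containment information about $\Ind\rho$ on the Fell bundle of $C^*_r(\G)$ over $\Gamma$ to the corresponding information about $\rho$ on the isotropy Fell bundle over $Q$, using exactness of $Q$ to ensure that no extra norm is picked up in the process. I expect this to be an application of Fell bundle theory for exact groups, in the spirit of Exel's results on reduced cross-sectional C*-algebras, and it requires careful coordination between the groupoid-level bundle $(B_\gamma)_{\gamma \in \Gamma}$ and the group-level isotropy bundle $(\C[H \cap \Phi^{-1}(q)])_{q \in Q}$ via the Fell-bundle morphism induced by $\eta_x$.
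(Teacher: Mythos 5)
There is a genuine gap, and you have located it yourself: the entire content of the theorem sits in the step you call the ``main obstacle,'' namely converting the hypothesis that $\Ind\rho$ factors through $C^*_r(\G)$ into a reduced-type condition on the $Q$-graded data $(\rho|_N,u)$, and that step is not carried out. Worse, the mechanism you propose for it cannot work as stated: exactness of a discrete group $Q$ does \emph{not} ensure that ``no extra norm is picked up'' when passing along Fell-bundle morphisms --- exact groups admit plenty of exotic completions of their group algebras and of cross-sectional algebras of Fell bundles over them (the free group $F_2$ is exact, and Example~\ref{ex:28a} of the paper exhibits the \emph{full} norm on $\C F_2$ arising as an $e$-norm), so there is no general principle ``in the spirit of Exel's results'' that does the descent for you. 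Note also that your correct observation that $(\Ind\rho)|_{C^*(\G_e)}$ is automatically reduced gives no leverage at all: in the degenerate case $N=\{e\}$, $\G=H$ a group, the unit fibre is $\C$ and every representation is fine there, yet the conclusion $\rho\prec\lambda_H$ is exactly what must be proved. (A smaller inaccuracy: $L^{\gamma Q}$ is in general not a multiple of $\Ind_N^{\G_e}(\rho|_N)$; decomposing the set $\{g\in\G_x:\Phi(g)\in\gamma Q\}$ into $(\G_e,H)$-orbits, one gets a direct sum of representations induced from \emph{conjugates} of $\rho|_N$ based at points $r(g_0)\ne x$. This is harmless for the automatic-reducedness claim, since induction still preserves weak containment, but it shows the coset bookkeeping is heavier than you suggest.)

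For comparison, the paper avoids any coset decomposition of $L$ and any crossed-product picture of $C^*_r(\Gxx)$. The grading gives a coaction $\delta\colon C^*_r(\G)\to C^*_r(\G)\otimes C^*_r(\Gamma)$, and the limit formula \eqref{eq:e-norm-limit} of Theorem~\ref{thmnorm}, together with its tensor-product version, shows that $\delta$ descends to an \emph{injective} $*$-homomorphism $\Delta_x$ from $C^*_e(\Gxx)$ into the $\lVert\cdot\rVert_e$-completion of $\C\Gxx\odot C^*_r(G)$, where $G=\Phi(\Gxx)$. Exactness enters only as exactness of the C$^*$-algebra $C^*_r(G)$: nuclear embeddability allows one to approximate through matrix algebras inside the limit formula and thereby identify that completion with the \emph{minimal} tensor product $C^*_e(\Gxx)\otimes C^*_r(G)$ (Lemma~\ref{lem:exact-completion}) --- this is the precise, and quite special, way in which ``no extra norm is picked up.'' Injectivity of $\Delta_x$ then gives $\rho\prec\rho\otimes(\lambda_G\circ\Phi|_{\Gxx})$ for a faithful representation $\rho$ of $C^*_e(\Gxx)$, and amenability of $N=\ker(\Phi|_{\Gxx})$ is used not via $\rho|_N\prec\lambda_N$ (which, as above, is too weak) but via $\eps_N\prec\lambda_N$, yielding
$$
\rho\prec\rho\otimes\lambda_{\Gxx/N}\sim\rho\otimes(\Ind\eps_N)\prec\rho\otimes(\Ind\lambda_N)\sim\rho\otimes\lambda_{\Gxx}\sim\lambda_{\Gxx}
$$
by Fell's absorption principle. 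If you want to rescue your plan, you would need to prove a localization statement playing the role of $\Delta_x$ --- i.e., extract from the groupoid-level hypothesis a statement about $\rho$ twisted by $\lambda_G$ --- and at that point you have essentially reconstructed the paper's argument.
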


We need some preparation to prove this theorem. The $1$-cocycle $\Phi$ defines a $\Gamma$-grading on $C_c(\G)$, or equivalently, a coaction $\delta\colon C_c(\G)\to C_c(\G)\odot\C \Gamma$ of the Hopf algebra $(\C \Gamma,\Delta)$, where $\Delta(\lambda_g)=\lambda_g\otimes\lambda_g$ and $\odot$ denotes the algebraic tensor product. Namely, if we view the elements of $C_c(\G)\odot\C \Gamma$ as $\C \Gamma$-valued functions on $\G$, then
$$
\delta(f)(g)=f(g)\lambda_{\Phi(g)}.
$$

The following lemma and its proof are standard.

\begin{lemma}
The map $\delta$ extends to an injective $*$-homomorphism $C^*_r(\G)\to C^*_r(\G)\otimes C^*_r(\Gamma)$, which defines a coaction of $(C^*_r(\Gamma),\Delta)$ on $C^*_r(\G)$.
\end{lemma}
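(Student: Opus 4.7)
The plan is to verify the statement by a Fell-absorption-style computation on the regular representations. First I would check directly from the definition that at the algebraic level $\delta\colon C_c(\G)\to C_c(\G)\odot\C\Gamma$ is a $*$-homomorphism: since $\Phi$ is a groupoid homomorphism we have $\Phi(h)\Phi(h^{-1}g)=\Phi(g)$ and $\Phi(g^{-1})=\Phi(g)^{-1}$, so
\begin{equation*}
\delta(f_1*f_2)(g)=\sum_{h\in\G^{r(g)}}f_1(h)f_2(h^{-1}g)\lambda_{\Phi(g)}=\Big(\sum_{h}f_1(h)\lambda_{\Phi(h)}\Big)*\Big(\sum_{h}f_2(h^{-1}g)\lambda_{\Phi(h^{-1}g)}\Big)(g),
\end{equation*}
and similarly for the involution. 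Thus only boundedness and injectivity on $C^*_r(\G)$ remain.

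For boundedness I would use the following absorption trick. For each $x\in\Gu$, let $U_x$ be the unitary on $\ell^2(\G_x)\otimes\ell^2(\Gamma)$ defined by $U_x(\delta_g\otimes\delta_\gamma):=\delta_g\otimes\delta_{\Phi(g)^{-1}\gamma}$. A direct calculation using \eqref{eq:rhox} gives
\begin{equation*}
U_x(\rho_x(f)\otimes 1)U_x^*(\delta_g\otimes\delta_\gamma)=\sum_{h\in\G_{r(g)}}f(h)\,\delta_{hg}\otimes\delta_{\Phi(h)\gamma},
\end{equation*}
which is exactly $(\rho_x\otimes\lambda_\Gamma)(\delta(f))(\delta_g\otimes\delta_\gamma)$, understanding the right hand side through the representation of $C_c(\G)\odot\C\Gamma$ on $\ell^2(\G_x)\otimes\ell^2(\Gamma)$. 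Hence
\begin{equation*}
\lVert(\rho_x\otimes\lambda_\Gamma)(\delta(f))\rVert=\lVert\rho_x(f)\rVert\le\lVert f\rVert_r.
\end{equation*}

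Since $\bigoplus_x\rho_x$ is a faithful representation of $C^*_r(\G)$ and $\lambda_\Gamma$ is a faithful representation of $C^*_r(\Gamma)$, the spatial tensor norm of an element $a\in C^*_r(\G)\odot C^*_r(\Gamma)$ equals $\sup_x\lVert(\rho_x\otimes\lambda_\Gamma)(a)\rVert$. Applying this to $a=\delta(f)$ and taking the supremum over $x$ gives $\lVert\delta(f)\rVert_{\min}=\lVert f\rVert_r$, so $\delta$ extends to an isometric, in particular injective, $*$-homomorphism $C^*_r(\G)\to C^*_r(\G)\otimes C^*_r(\Gamma)$.

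Finally, the coaction identity $(\delta\otimes\mathrm{id})\delta=(\mathrm{id}\otimes\Delta)\delta$ holds on $C_c(\G)$ because both sides send $f$ to $g\mapsto f(g)\,\lambda_{\Phi(g)}\otimes\lambda_{\Phi(g)}$, and it passes to the reduced completion by continuity of both sides. I expect the only mildly technical point is verifying that the spatial tensor norm on $C^*_r(\G)\odot C^*_r(\Gamma)$ is indeed computed by $\bigoplus_x(\rho_x\otimes\lambda_\Gamma)$; this is standard but is the step that makes the whole argument work, together with the conjugation formula for $U_x$ which is the groupoid analogue of Fell's absorption.
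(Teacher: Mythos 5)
Your strategy is exactly the paper's: conjugate $\rho_x(f)\otimes 1$ by a Fell-absorption unitary on $\ell^2(\G_x)\otimes\ell^2(\Gamma)$ to obtain $(\rho_x\otimes\lambda_\Gamma)(\delta(f))$, then assemble the direct sum $\bigoplus_x(\rho_x\otimes\lambda_\Gamma)$, which computes the minimal tensor norm. There is, however, one concrete error: the conjugation identity fails for the unitary as you defined it. With $U_x(\delta_g\otimes\delta_\gamma)=\delta_g\otimes\delta_{\Phi(g)^{-1}\gamma}$ one has $U_x^*(\delta_g\otimes\delta_\gamma)=\delta_g\otimes\delta_{\Phi(g)\gamma}$, so
\begin{equation*}
U_x(\rho_x(f)\otimes 1)U_x^*(\delta_g\otimes\delta_\gamma)=\sum_{h\in\G_{r(g)}}f(h)\,\delta_{hg}\otimes\delta_{\Phi(g)^{-1}\Phi(h)^{-1}\Phi(g)\gamma},
\end{equation*}
since $\Phi(hg)^{-1}=\Phi(g)^{-1}\Phi(h)^{-1}$; this is not $\sum_h f(h)\,\delta_{hg}\otimes\delta_{\Phi(h)\gamma}$ (even for abelian $\Gamma$ the second leg comes out as $\Phi(h)^{-1}\gamma$). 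The correct statement is $U_x^*(\rho_x(f)\otimes1)U_x=(\rho_x\otimes\lambda_\Gamma)(\delta(f))$; equivalently, define the unitary by $\delta_g\otimes\delta_\gamma\mapsto\delta_g\otimes\delta_{\Phi(g)\gamma}$, which is precisely the paper's $W_y$, satisfying $W_y(\rho_y(f)\otimes1)=(\rho_y\otimes\iota)(\delta(f))W_y$. Since either orientation still yields a unitary equivalence, hence $\lVert(\rho_x\otimes\lambda_\Gamma)(\delta(f))\rVert=\lVert\rho_x(f)\rVert$, this is a fixable slip rather than a genuine gap: the remaining steps --- that $\bigoplus_x(\rho_x\otimes\lambda_\Gamma)$ computes the spatial norm because $\bigoplus_x\rho_x$ and $\lambda_\Gamma$ are faithful and tensoring distributes over the direct sum in the first leg, that isometry gives injectivity (the paper states injectivity, which for a $*$-homomorphism amounts to the same thing), and that the coaction identity is verified on $C_c(\G)$ and extended by continuity --- are all correct and coincide with the paper's proof.
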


\bp
For every $y\in\Gu$, define a unitary operator
$$
W_y\colon \ell^2(\G_y)\otimes \ell^2(\Gamma)\to \ell^2(\G_y)\otimes \ell^2(\Gamma)\quad\text{by}\quad (W_y\xi)(g,\gamma):=\xi(g,\Phi(g)^{-1}\gamma).
$$
A simple computation shows then that for every $f\in C_c(\G)$ we have
$$
W_y(\rho_y(f)\otimes1)=(\rho_y\otimes\iota)\big(\delta(f)\big)W_y.
$$
It follows that we have an injective $*$-homomorphism $\delta_y\colon \rho_y(C^*_r(\G))\to \rho_y(C^*_r(\G))\otimes C^*_r(\Gamma)$ defined by
$\delta_y(a)=W_y(a\otimes1)W_y^*$, and then $(\rho_y\otimes\iota)\circ\delta=\delta_y\circ\rho_y$. Taking the direct sum of the representations~$\rho_y$ we conclude that the homomorphisms $\delta_y$ define the required extension of $\delta$.
\ep

\begin{lemma}\label{lem:e-norm_tensor}
Let $A$ be a C$^*$-algebra and $\A\subset A$ a dense $*$-subalgebra. For $h\in C_c(\Gxx)\odot \A$, define
$$
\|h\|_e=\inf\{\|f\|_r: f\in C_c(\G)\odot \A,\ \ (\eta_x\otimes\iota)(f)=h\},
$$
where $\|f\|_r$ is the norm of $f$ in the minimal tensor product $C^*_r(\G)\otimes A$. Then $\lVert\cdot\rVert_e$ extends uniquely to a C$^*$-cross norm on $C^*_e(\Gxx)\odot A$, and if $(q_V)_V$ is a net as in Theorem~~\ref{thmnorm}, then
\begin{equation}\label{eq:e-norm-limit2}
\|h\|_e=\lim_{V}\|(q_V\otimes1)f(q_V\otimes1)\|_r
\end{equation}
for every $h\in C_c(\Gxx)\odot \A$ and any $f\in C_c(\G)\odot \A$ such that $(\eta_x\otimes\iota)(f)=h$.
\end{lemma}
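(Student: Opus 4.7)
The plan is to imitate the proof of Theorem~\ref{thmnorm}, performing each estimate in the minimal tensor product $C^*_r(\G)\otimes A$. First I would establish the tensor analogue of \eqref{eq:f0}: if $f\in C_c(\G)\odot\A$ satisfies $(\eta_x\otimes\iota)(f)=0$, then $\lim_V\|(q_V\otimes1)f(q_V\otimes1)\|_r=0$. Writing $f=\sum_{j=1}^N f_j\otimes a_j$ with $\{a_j\}$ linearly independent in $\A$, the hypothesis forces $\eta_x(f_j)=0$ for every $j$, and
$$
\|(q_V\otimes1)f(q_V\otimes1)\|_r\le\sum_{j=1}^N\|q_V*f_j*q_V\|_r\,\|a_j\|
$$
reduces the claim to \eqref{eq:f0}. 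With this in hand, the argument of Lemma~\ref{lem:e-expression} transfers unchanged: for $f$ with $(\eta_x\otimes\iota)(f)=h$ and $W\in\V$, the element $f-(q_W\otimes1)f(q_W\otimes1)$ has vanishing $\eta_x\otimes\iota$ image, so $\limsup_V\|(q_V\otimes1)f(q_V\otimes1)\|_r\le\|(q_W\otimes1)f(q_W\otimes1)\|_r$. This shows the limit in \eqref{eq:e-norm-limit2} exists; combined with $\|(q_V\otimes1)f(q_V\otimes1)\|_r\le\|f\|_r$ and the vanishing step applied to $f-f'$ for a second preimage, the limit equals the infimum and is independent of $f$.

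Next I would check that $\|\cdot\|_e$ is a $C^*$-seminorm on $C_c(\Gxx)\odot\A$, mirroring Lemma~\ref{lem:c-star-norm}. For submultiplicativity, given $\varepsilon>0$, pick $f_1$ close to $\|h_1\|_e$ and build $f_2\in C_c(\G)\odot\A$ as a sum of the form $\sum_g (q_V*\varphi_g*q_V)\otimes c_g$ over a finite set of isolated preimages $g$ in $\Gxx$ of the support of $h_2$; Lemma~\ref{lem21} applied coordinatewise gives $(\eta_x\otimes\iota)(f_1f_2)=h_1h_2$, and the limit formula lets us shrink $\|f_2\|_r$ below $\|h_2\|_e+\varepsilon$. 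The $C^*$-identity is verified by the same $q_V$-sandwich squaring trick. Nondegeneracy follows from $\|h\|_e\ge\|h\|_{\min}$, where $\|\cdot\|_{\min}$ is the minimal cross norm on $C^*_e(\Gxx)\otimes A$: since $\vartheta_{x,e}$ is a completely positive contraction, so is $\vartheta_{x,e}\otimes\iota\colon C^*_r(\G)\otimes_{\min} A\to C^*_e(\Gxx)\otimes_{\min} A$, forcing $\|h\|_{\min}\le\|f\|_r$ for every preimage $f$ of $h$.

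Finally, the cross-norm identity drops out of \eqref{eq:e-norm-limit2}: for $h_0\in C_c(\Gxx)$, $a\in\A$, and $f_0\in C_c(\G)$ with $\eta_x(f_0)=h_0$, minimality of the cross norm on $C^*_r(\G)\otimes A$ gives $\|(q_V*f_0*q_V)\otimes a\|_r=\|q_V*f_0*q_V\|_r\,\|a\|$, which tends to $\|h_0\|_e\,\|a\|$ by Theorem~\ref{thmnorm}. The unique extension to $C^*_e(\Gxx)\odot A$ then follows by density and continuity, using the triangle inequality $\|\sum_i h_i\otimes a_i\|_e\le\sum_i\|h_i\|_e\|a_i\|$ to guarantee that any approximating sequence in $C_c(\Gxx)\odot\A$ is Cauchy and that the limit is independent of the approximation. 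The main delicate point I expect is the first step: reducing the $A$-valued vanishing statement to the scalar version \eqref{eq:f0} via the linear-independence decomposition. After that, the rest is formal bookkeeping parallel to Section~\ref{section3}.
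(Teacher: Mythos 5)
Your proposal is correct and follows essentially the same route as the paper: the paper likewise proves \eqref{eq:e-norm-limit2} and the C$^*$-seminorm property by rerunning Lemmas~\ref{lem:e-expression} and~\ref{lem:c-star-norm} (viewing elements of $C_c(\G)\odot\A$ as $\A$-valued functions, whereas you reduce the vanishing step to the scalar case~\eqref{eq:f0} by a linear-independence decomposition --- an immaterial difference), then derives the cross identity $\|h\otimes a\|_e=\|h\|_e\|a\|$ from the two limit formulas, dominates the minimal norm via the contraction $\vartheta_{x,e}\otimes\iota$, and obtains existence and uniqueness of the extension by density. Your final density-and-continuity step is more compressed than the paper's explicit argument with the $*$-homomorphisms $\varphi\colon B\to C^*_e(\Gxx)\otimes A$ and $\psi\colon C^*_e(\Gxx)\odot A\to B$ satisfying $\varphi\circ\psi=\mathrm{id}$, but the ingredients you cite (cross identity on elementary tensors and $\lVert\cdot\rVert_e\ge\lVert\cdot\rVert_{\min}$, which gives injectivity) are exactly what makes that argument work, so there is no genuine gap.
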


We will need this lemma only for $\A=\C G\subset A=C^*_r(G)$ for a discrete group $G$, in which case a large part of it is a consequence of Theorem~\ref{thmnorm} applied to the groupoid $\G\times G$. The general case is, however, of some independent interest and the proof is not much longer.

\bp
Identity~\eqref{eq:e-norm-limit2} and the C$^*$-seminorm property of $\lVert\cdot\rVert_e$ are proved exactly as Lemmas~\ref{lem:e-expression} and~\ref{lem:c-star-norm}, if we view the elements of $C_c(\G)\odot \A$ as $\A$-valued functions on $\G$. Furthermore, identities~\eqref{eq:e-norm-limit} and~\eqref{eq:e-norm-limit2} imply that $\|h\otimes a\|_e=\|h\|_e\|a\|$ for all $h\in C_c(\Gxx)$ and $a\in \A$.

Next, the seminorm $\lVert\cdot\rVert_e$ on $ C_c(\Gxx)\odot \A$ dominates the norm defined by the embedding $ C_c(\Gxx)\odot \A\hookrightarrow C^*_e(\Gxx)\otimes A$.
This is proved similarly to the proof of the inequality $\lVert\cdot\rVert_e'\ge\lVert\cdot\rVert_e$ in Lemma~\ref{lem:c-star-norm}: since $\vartheta_{x,e}\otimes\iota\colon C^*_r(\G)\otimes A\to C^*_e(\Gxx)\otimes A$ is a contraction, for all $h\in C_c(\Gxx)\odot \A$ and $f\in C_c(\G)\odot \A$ such that $(\eta_x\otimes\iota)(f)=h$, the norm of $h$
in $C^*_e(\Gxx)\otimes A$ is not larger than the norm of $f$ in $C^*_r(\G)\otimes A$. In particular, $\lVert\cdot\rVert_e$ is a C$^*$-norm on $ C_c(\Gxx)\odot \A$, and if we denote by $B$ the $\lVert\cdot\rVert_e$-completion of $ C_c(\Gxx)\odot \A$, then the identity map on $C_c(\Gxx)\odot \A$ extends to a $*$-homomorphism $\varphi\colon B\to C^*_e(\Gxx)\otimes A$.

On the other hand, the equality $\|h\otimes a\|_e=\|h\|_e\|a\|$ implies that the identity map on $ C_c(\Gxx)\odot \A$ extends to a $*$-homomorphism $\psi\colon C^*_e(\Gxx)\odot A\to B$. By construction, $\varphi\circ\psi$ is the identity map on $C^*_e(\Gxx)\odot A$, hence $\psi$ is injective. Therefore $C^*_e(\Gxx)\odot A$ can be viewed as a subalgebra of $B$, so the restriction of the norm on $B$ to this subalgebra gives the required extension of $\lVert\cdot\rVert_e$.

Finally, $C_c(\Gxx)\odot \A$ is dense in $C^*_e(\Gxx)\odot A$ with respect to any C$^*$-cross norm on $C^*_e(\Gxx)\odot A$, so any such norm is completely determined by its restriction to $C_c(\Gxx)\odot \A$.
\ep

Therefore the completion of $C_c(\Gxx)\odot \A$ with respect to $\lVert\cdot\rVert_e$ is a C$^*$-tensor product of $C^*_e(\Gxx)$ and~$A$.

\begin{lemma}\label{lem:exact-completion}
 If $A$ is an exact C$^*$-algebra, then the completion of $\C\Gxx\odot \A$ with respect to $\lVert\cdot\rVert_e$ is the minimal tensor product $C^*_e(\Gxx)\otimes A$.
\end{lemma}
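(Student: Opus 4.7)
My plan is to factor the analysis through the multiplicative domain $D\subset C^*_r(\G)$ of $\vartheta_{x,e}$, on which $\vartheta_{x,e}|_D\colon D\twoheadrightarrow C^*_e(\Gxx)$ is a surjective $*$-homomorphism with kernel $J:=\ker(\vartheta_{x,e}|_D)$. Lemma~\ref{lem:e-norm_tensor} already gives $\|\cdot\|_e\ge\|\cdot\|_{\min}$ on $C^*_e(\Gxx)\odot A$, so only the reverse inequality requires work. Exactness of $A$ applied to $0\to J\to D\to C^*_e(\Gxx)\to 0$ produces the short exact sequence of minimal tensor products
\[
0\to J\otimes A\to D\otimes A\xrightarrow{\vartheta_{x,e}|_D\otimes\iota}C^*_e(\Gxx)\otimes A\to 0,
\]
while the inequality $\|(\eta_x\otimes\iota)(f)\|_e\le\|f\|_r$ from Lemma~\ref{lem:e-norm_tensor} extends $\eta_x\otimes\iota$ by continuity to a completely positive contraction
\[
\tilde\vartheta\colon C^*_r(\G)\otimes A\to C^*_e(\Gxx)\otimes_e A,
\]
where $C^*_e(\Gxx)\otimes_e A$ denotes the completion of $C^*_e(\Gxx)\odot A$ in $\|\cdot\|_e$. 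I will show that the restriction $\Phi:=\tilde\vartheta|_{D\otimes A}$ is a surjective $*$-homomorphism with $\ker\Phi=J\otimes A$; combined with the exact sequence above this gives a canonical isomorphism $C^*_e(\Gxx)\otimes_e A\cong C^*_e(\Gxx)\otimes A$, forcing $\|\cdot\|_e=\|\cdot\|_{\min}$.

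To see $\Phi$ is multiplicative I would work on the $*$-subalgebra
\[
D_0:=C_D+C_c\bigl(\G\setminus(\G^x\cup\G_x)\bigr)\subset D\cap C_c(\G),
\]
where $C_D$ consists of the functions from Lemma~\ref{lem21} supported on finite unions of bisections each meeting $\Gxx$ in one point. Both summands lie in $D$ (the second by a direct check of the convolution formula, since vanishing on $\G^x\cup\G_x$ kills all relevant cross terms), and $D_0$ is closed under convolution and involution by bisection arithmetic. On $D_0$ the restriction $\eta_x$ is a genuine $*$-homomorphism onto $C_c(\Gxx)$, so $\tilde\vartheta$ agrees with the honest $*$-homomorphism $\vartheta_{x,e}|_D\otimes\iota$ on $D_0\odot A$. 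Granting density of $D_0$ in $D$, multiplicativity propagates by continuity to $D\otimes A$, making $\Phi$ a $*$-homomorphism with closed image containing the dense set $C_c(\Gxx)\odot A\subset C^*_e(\Gxx)\otimes_e A$, hence surjective. The relation $\pi\circ\Phi=\vartheta_{x,e}|_D\otimes\iota$, where $\pi\colon C^*_e(\Gxx)\otimes_e A\twoheadrightarrow C^*_e(\Gxx)\otimes A$ is the canonical contractive surjection, forces $\ker\Phi\subseteq J\otimes A$, and the reverse inclusion follows from $C_c(\G\setminus(\G^x\cup\G_x))\subseteq D_0\cap J$ being dense in $J$ and mapped to zero by $\eta_x\otimes\iota$.

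The main obstacle is thus the density of $D_0$ in $D$. Since $\vartheta_{x,e}(C_D)=C_c(\Gxx)$ is already dense in $D/J\cong C^*_e(\Gxx)$, this reduces to density of $C_c(\G\setminus(\G^x\cup\G_x))$ in $J$. Given $j\in J$ and $\eps>0$, my plan is to pick $f\in C_c(\G)$ with $\|f-j\|_r<\eps$, use $\|\eta_x(f)\|_e=\|\vartheta_{x,e}(f-j)\|_e<\eps$ together with Theorem~\ref{thmnorm} to find $g\in C_c(\G)$ with $\eta_x(g)=\eta_x(f)$ and $\|g\|_r<2\eps$, and then subtract small bump functions supported on tiny bisections around the finitely many points of $(\G^x\cup\G_x)\setminus\Gxx$ in $\supp(f-g)$ to eliminate the remaining values on $\G^x\cup\G_x$. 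Controlling the pointwise values of $f-g$ at these discrete points via the approximation to $j$ (and the vanishing of $j$ there) ensures the total correction is $O(\eps)$, producing an element of $C_c(\G\setminus(\G^x\cup\G_x))$ within $O(\eps)$ of $j$.
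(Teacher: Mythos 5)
Your route is genuinely different from the paper's, and its core diagram is correct. The paper proves the missing inequality $\lVert h\rVert_{\min}\ge\lVert h\rVert_e$ directly: it invokes Kirchberg's characterization of exactness as nuclear embeddability, factors the inclusion $A\subset B(H)$ approximately through $\Mat_n(\C)$ by contractive completely positive maps, and combines the (trivial) matrix case with the limit formula \eqref{eq:e-norm-limit2}. You instead use exactness in its short-exact-sequence form and identify the $\lVert\cdot\rVert_e$- and minimal completions as the same quotient of $D\otimes A$, where $D$ is the multiplicative domain of $\vartheta_{x,e}$ — a structural argument the paper does not make, resting on its observation that $\vartheta_{x,e}$ induces an isomorphism $D/J\cong C^*_e(\Gxx)$. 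Moreover, your argument is easier than you make it: since $\tilde\vartheta(f\otimes a)=\eta_x(f)\otimes a$ for $f\in C_c(\G)$ and $\lVert\cdot\rVert_e$ is a cross norm by Lemma~\ref{lem:e-norm_tensor}, continuity gives $\tilde\vartheta(b\otimes a)=\vartheta_{x,e}(b)\otimes a$ for \emph{every} $b\in C^*_r(\G)$. On elementary tensors from $D\odot A$, multiplicativity of $\Phi$ is then immediate from multiplicativity of $\vartheta_{x,e}$ on $D$, so no auxiliary subalgebra $D_0$ and no density of $D_0$ in $D$ are needed; the same formula yields $\tilde\vartheta(j\otimes a)=\vartheta_{x,e}(j)\otimes a=0$ for $j\in J$, giving $J\otimes A\subseteq\ker\Phi$ without any density of $C_c(\G\setminus(\G^x\cup\G_x))$ in $J$. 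The reverse inclusion via $\pi\circ\Phi=(\vartheta_{x,e}|_D)\otimes\iota$ and exactness, the surjectivity of $\Phi$ (lift $h\otimes a$ through Lemma~\ref{lem21}-type functions), and the final identification of the two norms are all fine as you state them.

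The genuine gap is precisely in the step you flag as the main obstacle. In your density argument for $C_c(\G\setminus(\G^x\cup\G_x))\subseteq J$, the assertion that the total correction is $O(\eps)$ does not follow: you control each value $|(f-g)(k)|$ by $O(\eps)$ at the finitely many points $k\in(\G^x\cup\G_x)\setminus\Gxx$ where $f-g$ is nonzero, but the number $N_\eps$ of such points depends on $f$ and $g$, hence on $\eps$, and is unbounded as $\eps\to0$. The triangle inequality together with \eqref{eq:norm-bisection} only bounds the reduced norm of the sum of bump corrections by $N_\eps\cdot O(\eps)$, and sums of functions supported on distinct bisections can genuinely have reduced norm of the order of the sum of the sup-norms, so nothing forces this to vanish. (A secondary point: your parenthetical ``vanishing of $j$ there'' is true but needs proof — for $j\in J$ one has $\vartheta_{x,e}(j^*j)=\vartheta_{x,e}(j)^*\vartheta_{x,e}(j)=0$ by the multiplicative domain property, hence $\vartheta_{x,r}(j^*j)=0$ after composing with $C^*_e(\Gxx)\to C^*_r(\Gxx)$, which gives $\rho_x(j)\delta_x=0$, i.e.\ the matrix coefficients of $j$ vanish along $\G_x$, and similarly along $\G^x$ using $jj^*$.) Fortunately, as explained above, the entire density claim — indeed the entire last two paragraphs of your proposal — can be deleted: working with $D$ itself and the elementary-tensor formula for $\tilde\vartheta$, the proof closes, and it is then a correct alternative to the paper's approximation argument.
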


\bp
The lemma is obviously true for nuclear C$^*$-algebras, in particular, for finite dimensional ones. Assuming that $A$ is only exact, represent it as a concrete C$^*$-algebra $A\subset B(H)$. Then the embedding map $A\to B(H)$ is nuclear. It follows that if we fix $f\in C_c(\G)\odot \A$ and $\eps>0$, then we can find contractive completely positive maps
$\theta\colon A\to\Mat_n(\C)$ and $\psi\colon\Mat_n(\C)\to B(H)$ such that $\|f-(\iota\otimes\psi\circ\theta)(f)\|_r<\eps$.

Put $h:=(\eta_x\otimes\iota)(f)\in \C\Gxx\odot \A$. Let us denote by $\lVert\cdot\rVert$ the minimal norm on $C^*_e(\Gxx)\odot B$ for $B=A$ and $B=\Mat_n(\C)$. Since the lemma is true for $\Mat_n(\C)$, by using \eqref{eq:e-norm-limit2} we get
\begin{align*}
\|h\|&\ge \|(\iota\otimes\theta)(h)\|=\lim_{V}\|(\iota\otimes\theta)\big((q_V\otimes1)f(q_V\otimes1)\big)\|_r\\
&\ge \lim_{V}\|(\iota\otimes\psi\circ\theta)\big((q_V\otimes1)f(q_V\otimes1)\big)\|_r\\
&\ge \lim_{V}\|(q_V\otimes1)f(q_V\otimes1)\|_r-\eps=\|h\|_e-\eps.
\end{align*}
As $\eps$ was arbitrary, we thus have $\|h\|\ge\|h\|_e$. Since the opposite inequality holds by the previous lemma, we get the result.
\ep

\bp[Proof of Theorem~\ref{thm:e=r}]
Since $\delta\colon C^*_r(\G)\to C^*_r(\G)\otimes C^*_r(\Gamma)$ is isometric and
$$
\delta(q_V*f*q_V)=(q_V\otimes1)\delta(f)(q_V\otimes1)
$$
for every $f\in C_c(\G)$, identities~\eqref{eq:e-norm-limit} and~\eqref{eq:e-norm-limit2} imply that $\delta$ induces an injective $*$-homomorphism~$\Delta_x$ from $C^*_e(\Gxx)$ into the $\lVert\cdot\rVert_e$-completion of $\C\Gxx\odot C^*_r(\Gamma)$. Put $G:=\Phi(\Gxx)\subset \Gamma$. Then the image of $\Delta_x$ is contained in the closure of $\C\Gxx\odot C^*_r(G)$ in the $\lVert\cdot\rVert_e$-completion of $\C\Gxx\odot C^*_r(\Gamma)$, hence (by~\eqref{eq:e-norm-limit2}) in  the $\lVert\cdot\rVert_e$-completion of $\C\Gxx\odot C^*_r(G)$. Since $G$ is exact by assumption, by Lemma~\ref{lem:exact-completion} we conclude that $\Delta_x$ can be viewed as a homomorphism $C^*_e(\Gxx)\to C^*_e(\Gxx)\otimes C^*_r(G)$.

For a discrete group $H$ and a subgroup $H_0$, denote by $\lambda_{H/H_0}$ the quasi-regular representation of~$H$ on $\ell^2(H/H_0)$ and by $\eps_H$ the trivial representation of $H$. Let us also denote weak containment and quasi-equivalence of representations by $\prec$ and $\sim$, respectively. Now, put $H:=\Gxx\cap\ker\Phi$ and let $\rho$ be a unitary representation of $\Gxx$ that integrates to a faithful representation of $C^*_e(\Gxx)$. Let $\rho\times \lambda_G$ denote the injective representation of $C^*_e(\Gxx)\otimes C^*_r(G)$ arising from taking the tensor product of the integrated representations of $\rho$ and $\lambda_{G}$, and let $\rho\otimes (\lambda_{G} \circ\Phi|_{\Gxx})$ denote the tensor product representation of $\Gxx$. Then $\big(\rho\otimes (\lambda_{G} \circ\Phi|_{\Gxx} )\big) (h) = (\rho \times \lambda_{G}) (\Delta_{x}(h))$ for $h\in C_{c}(\Gxx)$, so the injectivity of $\Delta_x$ implies that $\rho \prec \rho\otimes (\lambda_G\circ\Phi|_{\Gxx})$, hence
$$
\rho\prec\rho\otimes (\lambda_G\circ\Phi|_{\Gxx})\sim \rho\otimes \lambda_{\Gxx/H}
\sim\rho\otimes(\Ind\eps_H).
$$
Since $H$ is amenable by assumption, we have $\eps_H\prec\lambda_H$, hence
$$
\rho\otimes(\Ind\eps_H)\prec\rho\otimes(\Ind\lambda_H)\sim \rho\otimes\lambda_{\Gxx}\sim\lambda_{\Gxx},
$$
where in the last step we used Fell's absorption principle. Therefore $\rho\prec\lambda_{\Gxx}$. Since $\lVert\cdot\rVert_e$ dominates the reduced norm, we conclude that $\lVert\cdot\rVert_e=\lVert\cdot\rVert_r$.
\ep

\begin{remark}
If $G$ is an exact group, then Lemma~\ref{lem:exact-completion} for $A=C^*_r(G)$ can be rephrased by saying that for any locally compact \'etale groupoid $\G$ and any $x\in\Gu$ we have $C^*_e((\G\times G)^x_x)=C^*_e(\Gxx)\otimes C^*_r(G)$. If we could prove this for arbitrary $G$, the assumption of exactness in Theorem~\ref{thm:e=r} would be unnecessary.
\hfill$\diamondsuit$
\end{remark}

Applying Theorem~\ref{thm:e=r} to groupoids from Example~\ref{ex:semidirect} and recalling that the subgroups of exact groups are exact, we get the following.

\begin{cor}
Assume an exact discrete group $\Gamma$ acts on a locally compact \'etale groupoid $\G$. For a fixed $x\in\Gu$, assume that the isotropy group $\Gxx$ is amenable. Then $\lVert\cdot\rVert_e=\lVert\cdot\rVert_r$ on the group algebra of $(\Gamma\ltimes\G)^x_x$.
\end{cor}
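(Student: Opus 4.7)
The plan is to deduce this as a direct application of Theorem~\ref{thm:e=r} to the semidirect product groupoid $\Gamma\ltimes\G$ equipped with the canonical grading $\Phi\colon \Gamma\ltimes\G\to\Gamma$, $\Phi(\gamma,g)=\gamma$, introduced in Example~\ref{ex:semidirect}. All that needs to be done is to verify the two hypotheses of the theorem at the point $x\in\Gu=(\Gamma\ltimes\G)^{(0)}$.

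For the first hypothesis, Example~\ref{ex:semidirect} already records the identification $(\Gamma\ltimes\G)^x_x\cap\ker\Phi=\{e\}\times\Gxx\cong\Gxx$. Hence $\ker\bigl(\Phi|_{(\Gamma\ltimes\G)^x_x}\bigr)$ is isomorphic to $\Gxx$, and this group is amenable by assumption.

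For the second hypothesis, $\Phi\bigl((\Gamma\ltimes\G)^x_x\bigr)$ is a subgroup of $\Gamma$. Since exactness of discrete groups passes to subgroups and $\Gamma$ is exact, this image is exact. Theorem~\ref{thm:e=r} therefore applies and yields $\lVert\cdot\rVert_e=\lVert\cdot\rVert_r$ on the group algebra of $(\Gamma\ltimes\G)^x_x$.

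There is no real obstacle here: the corollary is a purely formal consequence, and the only input beyond Theorem~\ref{thm:e=r} itself is the structural description of the isotropy of the semidirect product provided in Example~\ref{ex:semidirect} together with the permanence of exactness under passage to subgroups.
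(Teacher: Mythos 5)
Your proposal is correct and is exactly the paper's argument: the authors also obtain the corollary by applying Theorem~\ref{thm:e=r} to the graded groupoid $\Gamma\ltimes\G$ from Example~\ref{ex:semidirect}, using the identification $(\Gamma\ltimes\G)^x_x\cap\ker\Phi\cong\Gxx$ for the amenability hypothesis and the fact that subgroups of exact groups are exact for the other. Your write-up merely makes these verifications explicit, which the paper leaves as a one-line remark.
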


Note that if $\G$ is topologically amenable (see \cite{ADR}*{Section 2.2.b}), then its isotropy groups are amenable.

\smallskip

Theorem~\ref{thm:e=r} also applies to the injectively graded groupoids from Examples~\ref{ex:partial}--\ref{ex:DR}, when the grading group $\Gamma$ is exact. Although the assumption of exactness is very mild, it is still somewhat unsatisfactory, since Theorem~\ref{thm:e=r} does not fully cover even the trivial case of transformation groupoids (Proposition~\ref{prop:transformation-groupoids}). We will therefore prove the following result, which assumes much more about the groupoid structure, but does not need exactness of the grading group.

\begin{thm} \label{tmred}
Suppose $\Gamma$ is a discrete group and $\G$ is a $\Gamma$-graded locally compact \'etale groupoid, with grading $\Phi\colon\G\to\Gamma$. For a fixed $x\in \Gu $, assume $\Phi$ is injective on $\Gxx$  and there is a family $\{U_{g}\}_{g\in \G_{x}^{x}}$ of bisections such that:
\begin{enumerate}
\item $g\in U_{g}$ and $\Phi(U_{g})=\Phi(g)$ for all $g\in\Gxx$;
\item $U_{x}=\Gu$ and, for all $g\in \G_{x}^{x}$, $U_{g^{-1}}=U_{g}^{-1}$;
\item if $g_{1}, \dots , g_{n} \in \G_{x}^{x}$ and $g_{1}\dots g_{n}=x$, then $U_{g_{1}} U_{g_{2}} \dots U_{g_{n}}\subset\Gu$.
\end{enumerate}

Then $\lVert \cdot \rVert_{e} =\lVert \cdot \rVert_{r}$ on $\C\Gxx$.
\end{thm}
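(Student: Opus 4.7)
The nontrivial inequality is $\|h\|_e \leq \|h\|_r$; the other is automatic. I will verify it using the limit formula~\eqref{eq:e-norm-limit} of Theorem~\ref{thmnorm}: for $h = \sum_{g\in F} c_g u_g$ with $F \subset \Gxx$ finite and $\delta > 0$, I build a lift $f \in C_c(\G)$ of $h$ tailored to the bisections, and estimate $\|q_V * f * q_V\|_r$.

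For each $g\in F$, pick $\chi_g\in C_c(U_g)$ supported in an open neighbourhood $O_g\subset U_g$ of $g$, with $0\leq\chi_g\leq 1$, $\chi_g(g)=1$, and $|\chi_g - 1|<\delta$ on $O_g$; set $f := \sum_g c_g\chi_g$, so $\eta_x(f) = h$. The bisections define a partial $\Gxx$-action $\alpha_g\colon s(U_g)\to r(U_g)$ via $\alpha_g(y) := r(\bar g_y)$, where $\bar g_y$ is the unique element of $U_g$ with source $y$. Conditions (1)--(3) give $\alpha_g\alpha_h = \alpha_{gh}$ (where defined) and the local composition identity $\bar g_{\alpha_h(y)}\bar h_y = \overline{(gh)}_y \in U_{gh}$, valid whenever $\alpha_h(y) \in r(U_{gh})$; injectivity of $\Phi|_{\Gxx}$ ensures $g\mapsto\bar g_y$ is injective. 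Choose an open neighbourhood $V$ of $x$ small enough that (a)~$V\subset s(O_g)\cap r(O_g)$ for all $g\in F$ and (b)~the local composition $\bar g_{\alpha_p(z)}\bar p_z = \overline{(gp)}_z$ holds whenever it arises in the computation below; pick $q_V\in C_c(\Gu)$ with $0\leq q_V\leq 1$, $q_V(x)=1$, $\supp q_V\subset V$.

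A direct computation gives
\[
\rho_y(q_V*f*q_V)\delta_k = q_V(r(k)) \sum_{g\in F} c_g\, q_V(\alpha_g(r(k)))\,\chi_g(\bar g_{r(k)})\,\delta_{\bar g_{r(k)}k}.
\]
For each $k_0\in\G_y$ with $z_0 := r(k_0)\in V$, the bijection $p\mapsto \bar p_{z_0}k_0$ identifies $S(k_0) := \{p\in\Gxx : z_0\in s(U_p)\}$ with the iota-orbit $\iota_{k_0}(S(k_0))\subset\G_y$. Condition (b) ensures that this iota-orbit is invariant under $\rho_y(q_V*f*q_V)$, and the restriction, read through the identification $\ell^2(\iota_{k_0}(S(k_0)))\hookrightarrow\ell^2(\Gxx)$, becomes
\[
\tilde T_{k_0}\delta_p = \sum_{h\in F} c_h\,\phi_p\phi_{hp}\psi_{p,h}\,\delta_{hp}, \qquad \phi_p := q_V(\alpha_p(z_0)),\ \ \psi_{p,h} := \chi_h(\bar h_{\alpha_p(z_0)}),
\]
with the convention that undefined terms vanish. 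Writing $D\delta_p := \phi_p\delta_p$ (a contraction on $\ell^2(\Gxx)$), one has $\tilde T_{k_0} = D\lambda_{\Gxx}(h)D + E_{k_0}$, where $E_{k_0} = \sum_h c_h\lambda_{\Gxx}(u_h)M_h$ has diagonal factors $(M_h)_{pp} = \phi_p\phi_{hp}(\psi_{p,h}-1)$. By (a), whenever $\phi_p\neq 0$ one has $\alpha_p(z_0)\in V\subset s(O_h)$, so $\bar h_{\alpha_p(z_0)}\in O_h$ and $|\psi_{p,h}-1|<\delta$; hence $\|E_{k_0}\|\leq\delta\|c\|_1$ uniformly, giving $\|\tilde T_{k_0}\|\leq\|h\|_r + \delta\|c\|_1$. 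Block-diagonality across iota-orbits yields $\|\rho_y(q_V*f*q_V)\|\leq\|h\|_r + \delta\|c\|_1$ for every $y$, hence $\|q_V*f*q_V\|_r\leq\|h\|_r + \delta\|c\|_1$. Letting $V$ shrink and $\delta\to 0$ gives $\|h\|_e\leq\|h\|_r$.

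The main obstacle is arranging condition (b): the local composition $\bar g_{\alpha_p(z_0)}\bar p_{z_0} = \overline{(gp)}_{z_0}$ must hold for all $g\in F$ and all $p\in\Gxx$ with $\alpha_p(z_0)\in V$, which is a priori an infinite family of conditions on $V$, since $r(U_{gp})$ varies with $p$. The point is that $\phi_p$ vanishes whenever $\alpha_p(z_0)\notin V$, so only those $p$ for which the partial action lands back in $V$ actually contribute; combined with the triple-product identity in condition~(3), this restricts the relevant products to a class where the composition can be forced by a careful shrinking of $V$ relative to all the $r(U_q)$ that arise. Uniformly controlling this over $y$ and $k_0$, so that the same $V$ works, is the most delicate part of the argument.
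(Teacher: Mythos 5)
You have correctly identified the crux, but the argument has a genuine gap exactly where you flag it, and the proposed repair (``careful shrinking of $V$'') cannot work as stated. Your block decomposition is parameterized by \emph{single-step} orbits $\{\bar p_{z_0}k_0 : p\in\Gxx\}$, and invariance of such an orbit under $\rho_y(q_V*f*q_V)$ requires the local composition identity $\bar g_{\alpha_p(z_0)}\bar p_{z_0}=\overline{(gp)}_{z_0}$. By condition (3) this identity holds precisely when $\alpha_g(\alpha_p(z_0))\in r(U_{gp})$ (note: the range of the \emph{product}, not $\alpha_p(z_0)\in r(U_{gp})$ as you wrote), and since the cutoffs only force $\alpha_g(\alpha_p(z_0))\in V$, you would need $V\subset r(U_{gp})$ for the infinitely many products $gp$. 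Each $r(U_q)$, $q\in\Gxx$, is indeed an open neighbourhood of $x$, but the intersection of infinitely many of them need not be a neighbourhood, so no choice of $V$ achieves this; when the identity fails, the term $c_g\,q_V(r(w))\chi_g(\cdot)q_V(\alpha_p(z_0))\,\delta_{wk_0}$ with $w=\bar g_{\alpha_p(z_0)}\bar p_{z_0}\notin U_{gp}$ has a nonvanishing coefficient and leaves the span of the orbit. Moreover, the single-step orbits for different $k_0$ need not partition $\G_y$ (you never verify they form equivalence classes), so ``block-diagonality across iota-orbits'' is unjustified even apart from invariance.

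The paper's proof avoids this infinite family of conditions by two moves you are missing. First, instead of single-step orbits it uses the equivalence relation on $\G_y$ \emph{generated by words} in all the $U_g$'s ($g\sim_x h$ iff $h\in U_{g_1}\cdots U_{g_n}g$); this is shown to be an equivalence relation using conditions (2) and (3), and invariance of each class $H_\kappa$ under $\rho_y(f)$ is then trivial because $f$ is supported in the finite union $\bigcup_{i=1}^n U_{g_i}$ --- no product ever needs to land in a designated bisection. Second, injectivity of $\Phi$ on each class (proved from condition (3) and injectivity of $\Phi|_{\Gxx}$) gives an isometry $u\colon H_\kappa\to\ell^2(\Gamma)$, and one then proves the \emph{exact} matrix-coefficient identity $f(gk^{-1})=q(r(g))\,\tilde h(\Phi(gk^{-1}))$ for $g,k\in\kappa$, where the only nontrivial case ($gk^{-1}\notin\bigcup_i U_{g_i}$ but $\Phi(gk^{-1})=\Phi(g_l)$ and $r(g)\in V$) is excluded by a contradiction argument: writing $gk^{-1}=r_1\cdots r_m$ with $r_j\in U_{s_j}$ and using $V\subset\bigcap_{i=1}^n r(U_{g_i})$ --- finitely many conditions, only over the support of $h$ --- condition (3) forces $gk^{-1}\in U_{g_l}$. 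This yields $\rho_y(f)|_{H_\kappa}=u^*m_{\tilde q}\lambda_\Gamma(\tilde h)u$, hence $\|\rho_y(f)\|\le\|\lambda_\Gamma(\tilde h)\|=\|h\|_r$ directly. (A minor further simplification: the paper takes $f_i$ \emph{exactly} constant equal to $h(g_i)$ on $W_i=r^{-1}(V)\cap U_{g_i}$, so your $\delta$-approximate bumps and the $\delta\|c\|_1$ error term are unnecessary.) So the missing idea is precisely the passage from single-step orbits to the word-generated equivalence relation together with the direct coefficient computation; with your setup, the delicate point you defer is not a technicality but the place where the argument, as organized, breaks.
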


Again, we need some preparation to prove the theorem. Take $y\in\Gu$.

\begin{lemma}
Define a binary relation $\sim_{x}$ on $\G_{y}$ by
\begin{equation*}
g \sim_{x} h \iff \exists\ g_{1}, \dots , g_{n}\in \G_{x}^{x} \text{ with } h= U_{g_{1}}\dots U_{g_{n}} g .
\end{equation*}
Then $\sim_{x}$ is an equivalence relation.
\end{lemma}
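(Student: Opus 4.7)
The plan is to verify directly that $\sim_x$ satisfies the three axioms of an equivalence relation. First I would pin down the meaning of the set-product notation used in the definition: each $U_{g_i}$ is a bisection by hypothesis~(1), products of bisections are bisections, so $W:=U_{g_1}\cdots U_{g_n}$ is a bisection, and hence $Wg$ is either empty or a single point whenever $g\in\G_y$. The statement ``$h=U_{g_1}\cdots U_{g_n}g$'' should therefore be read as ``$Wg=\{h\}$,'' i.e.\ $r(g)\in s(W)$ and the unique element $w\in W$ with $s(w)=r(g)$ satisfies $wg=h$.

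For reflexivity, I would take $n=1$ and $g_1=x$: by hypothesis (2), $U_x=\Gu$, and for any $g\in\G_y$ the unique element of $\Gu$ with source $r(g)$ is the unit $r(g)$ itself, so $U_xg=\{r(g)g\}=\{g\}$. Hence $g\sim_x g$.

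For symmetry, suppose $h=U_{g_1}\cdots U_{g_n}g$, and write $W=U_{g_1}\cdots U_{g_n}$ with $w\in W$ the unique element such that $wg=h$. Then $w^{-1}\in W^{-1}=U_{g_n}^{-1}\cdots U_{g_1}^{-1}$, which by hypothesis~(2) equals $U_{g_n^{-1}}\cdots U_{g_1^{-1}}$. Since $w^{-1}h=g$ and this is the unique product in $W^{-1}h$, we obtain $g=U_{g_n^{-1}}\cdots U_{g_1^{-1}}h$ in the sense above, and the elements $g_n^{-1},\dots,g_1^{-1}$ all lie in $\Gxx$. Hence $h\sim_x g$. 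For transitivity, if $h=U_{g_1}\cdots U_{g_n}g$ and $k=U_{g_1'}\cdots U_{g_m'}h$, concatenating yields $k=U_{g_1'}\cdots U_{g_m'}U_{g_1}\cdots U_{g_n}g$, giving $g\sim_x k$ with all the new indices still in $\Gxx$.

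There is no real obstacle here; the lemma is essentially a bookkeeping consequence of hypotheses~(1) and~(2) (reflexivity and symmetry), while transitivity is formal. Hypothesis~(3) — that unit-valued words $g_1\cdots g_n=x$ give $U_{g_1}\cdots U_{g_n}\subset\Gu$ — is not used at this stage; it is presumably what will allow one to define a consistent action of $\Gxx$ on the $\sim_x$-equivalence classes in the next lemma, so I would flag it as the piece to keep in mind for the subsequent step rather than for the present one.
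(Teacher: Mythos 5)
Your proof is correct and follows essentially the same direct verification as the paper: reflexivity from $U_x=\Gu$, transitivity by concatenating words, and symmetry by passing to inverses. The one cosmetic difference is in symmetry: the paper cancels from the left, using step by step that $U_{g_1^{-1}}U_{g_1}\subset\Gu$ (which its phrase ``by our assumptions'' leaves ambiguous between condition (3) applied to the word $g_1^{-1}g_1=x$ and condition (2) alone), whereas you invert the whole word at once via $(U_{g_1}\cdots U_{g_n})^{-1}=U_{g_n^{-1}}\cdots U_{g_1^{-1}}$; both are valid, and your side remark that (3) is dispensable here is sound, since by (2) one has $U_{g^{-1}}U_g=U_g^{-1}U_g\subset\Gu$ automatically for any bisection. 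Only your closing guess at the role of (3) is slightly off: in the paper it is used not to define an action on the equivalence classes but to prove injectivity of $g\mapsto\Phi(g)$ on each class (Lemma~\ref{lkappa}) and again in establishing~\eqref{eq:f-h} in the proof of Theorem~\ref{tmred}.
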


\begin{proof}
Since $U_{x}=\Gu $, for every $g \in \G_{y}$ we have that $g = U_{x}g$, and hence $g \sim_{x} g$.

Assume now that $g \sim_{x} h$, and let $g_{1}, \dots , g_{n}\in \G_{x}^{x}$ with $h= U_{g_{1}}\dots U_{g_{n}} g$. Then $r(h)\in r(U_{g_{1}})=s(U_{g_{1}^{-1}})$ and hence the composition
$U_{g_{1}^{-1}}h= U_{g_{1}^{-1}}U_{g_{1}}\dots U_{g_{n}} g$ gives an element in $\G_{y}$. By our assumptions, $U_{g_{1}^{-1}}U_{g_{1}}$ is contained in $\Gu $, so
\begin{equation*}
 U_{g_{1}^{-1}}U_{g_{1}}U_{g_{2}}\dots U_{g_{n}} g =U_{g_{2}}\dots U_{g_{n}} g .
\end{equation*}
Continuing like this we get that $U_{g_{n}^{-1}}\dots U_{g_{2}^{-1}} U_{g_{1}^{-1}}h= g$, and hence $h \sim_{x} g$.

To complete the proof that $\sim_{x}$ is an equivalence relation assume that $g \sim_{x} h$ and $h \sim_{x} k$. There exist $g_{1}, \dots , g_{n}\in \G_{x}^{x}$ and $h_{1}, \dots , h_{m}\in \G_{x}^{x}$ such that $ h= U_{g_{1}}\dots U_{g_{n}} g$ and $k = U_{h_{1}}\dots U_{h_{m}} h$. Combining these two identities we get that
\begin{equation*}
 k = U_{h_{1}}\dots U_{h_{m}} h = U_{h_{1}}\dots U_{h_{m}}  U_{g_{1}}\dots U_{g_{n}} g
\end{equation*}
and hence $g \sim_{x} k$. 
\end{proof}

Let $K_y:=\G_{y} /{\sim_{x}}$. For each $\kappa \in K_y$, set
\begin{equation*}
H_{\kappa}:=\overline{\text{span} \{ \delta_{g} : g \in \kappa  \}}\subset\ell^2(\G_y).
\end{equation*}
Then $\ell^{2}(\G_{y}) = \bigoplus_{\kappa\in K_y} H_{\kappa}$.

\begin{lemma} \label{ldec}
If $f\in C_{c}(\G)$ is zero outside $\bigcup_{g\in \G_{x}^{x}} U_{g}$, then each subspace $H_{\kappa}$ ($\kappa\in K_y$) is invariant under the action of $\rho_{y}(f)$.
\end{lemma}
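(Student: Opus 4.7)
The plan is to reduce to the basis vectors of $H_\kappa$: for each $\kappa \in K_y$ and each $g \in \kappa$, I will show that $\rho_y(f)\delta_g$ still lies in $H_\kappa$. Since $H_\kappa = \overline{\operatorname{span}\{\delta_{g'} : g' \in \kappa\}}$ and $\ell^2(\G_y) = \bigoplus_{\kappa \in K_y} H_\kappa$ is an orthogonal decomposition, invariance on these basis vectors immediately gives invariance of $H_\kappa$ under $\rho_y(f)$ by linearity and continuity.

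Using the definition \eqref{eq:rhox}, $\rho_y(f)\delta_g = \sum_{h \in \G_{r(g)}} f(h)\,\delta_{hg}$, so it suffices to show that whenever $f(h) \neq 0$ and $s(h) = r(g)$, one has $hg \sim_x g$. By the support hypothesis on $f$, any such $h$ lies in some bisection $U_{g'}$ with $g' \in \Gxx$. Because $U_{g'}$ is a bisection, $s|_{U_{g'}}$ is a homeomorphism onto its image, so $h$ is the unique element of $U_{g'}$ with source $r(g)$; consequently the set product $U_{g'} \cdot \{g\}$ equals the singleton $\{hg\}$. In the convention used to define $\sim_x$, this is exactly the statement $hg = U_{g'}\,g$ (with $n=1$ and $g_1 = g'$), so $hg \sim_x g$, i.e.\ $hg \in \kappa$, and the invariance follows.

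The only subtle point worth flagging is the correct reading of the expressions $U_{g_1}\cdots U_{g_n}\,g$ appearing in the definition of $\sim_x$: they must be interpreted as (singleton) bisection products rather than arbitrary subsets of $\G$, which is legitimate here because each $U_{g_i}$ is a bisection, a product of bisections is again a bisection, and multiplying a bisection by a fixed element yields at most one element. Once this is understood, the proof is pure definition-unwinding and no further obstacle arises.
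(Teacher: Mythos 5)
Your proposal is correct and follows essentially the same argument as the paper: reduce to basis vectors $\delta_g$ with $g\in\kappa$, note that $f(h)\neq 0$ forces $h\in U_{g'}$ for some $g'\in\Gxx$, and conclude $hg=U_{g'}g$, hence $hg\sim_x g$. Your remark about reading $U_{g_1}\cdots U_{g_n}\,g$ as a singleton bisection product is exactly the convention the paper uses implicitly, so there is nothing to add.
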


\begin{proof}
Take $g\in\kappa$. Since
\begin{equation*}
\rho_{y}(f) \delta_{g} =\sum_{h \in \G_{r(g)}} f(h) \delta_{h  g},
\end{equation*}
it suffices to prove that $h g \in \kappa$ whenever $f(h)\neq 0$. If $f(h) \neq 0$, then by assumption $h \in U_{k}$ for some $k\in \G_{x}^{x}$. This means that
$hg = U_{k}g$, and hence $hg \sim_{x} g$. This proves the lemma.
\end{proof}

The following lemma will allow us to embed $H_\kappa$ into $\ell^2(\Gamma)$.

\begin{lemma} \label{lkappa}
For every $\kappa \in K_y$, the map $\kappa\ni g \mapsto \Phi(g)\in \Gamma$ is injective.
\end{lemma}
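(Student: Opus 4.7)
The plan is to use the grading compatibility together with the key hypothesis (3) on the bisections $U_g$. Suppose $g, h \in \kappa$ with $\Phi(g) = \Phi(h)$. Since $g \sim_x h$, by definition there exist $g_1, \ldots, g_n \in \Gxx$ such that $h \in U_{g_1} U_{g_2} \cdots U_{g_n} g$; that is, there are elements $u_i \in U_{g_i}$ with $s(u_1 \cdots u_n) = r(g)$ and $h = u_1 u_2 \cdots u_n g$.

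Applying $\Phi$ and using that $\Phi$ is a homomorphism with $\Phi(U_{g_i}) = \{\Phi(g_i)\}$ (assumption (1)), I get
$$
\Phi(h) = \Phi(g_1)\Phi(g_2)\cdots\Phi(g_n)\Phi(g).
$$
Combined with $\Phi(g) = \Phi(h)$, this yields $\Phi(g_1)\cdots \Phi(g_n) = e$, equivalently $\Phi(g_1 g_2 \cdots g_n) = \Phi(x)$. Since $\Phi$ is injective on $\Gxx$ by assumption, we conclude $g_1 g_2 \cdots g_n = x$.

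At this point, assumption (3) applies and tells us that $U_{g_1} U_{g_2}\cdots U_{g_n} \subset \Gu$. Hence the element $u_1 u_2\cdots u_n$ lies in $\Gu$. Since it must compose with $g$, it equals $r(g)$, and therefore $h = r(g) \cdot g = g$. This shows the map $g \mapsto \Phi(g)$ is injective on $\kappa$. There is no real obstacle here; the whole content is just chaining together the three hypotheses on $\{U_g\}_{g\in\Gxx}$ in the right order.
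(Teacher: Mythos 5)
Your proof is correct and follows essentially the same route as the paper's: apply $\Phi$ to the relation $h = u_1\cdots u_n g$ coming from $g\sim_x h$, use injectivity of $\Phi|_{\Gxx}$ to conclude $g_1\cdots g_n = x$, and then invoke hypothesis (3) to get $u_1\cdots u_n\in\Gu$, hence $h=g$. Your version is only slightly more explicit than the paper's in naming the elements $u_i\in U_{g_i}$ and identifying the resulting unit with $r(g)$; there is no gap.
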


\begin{proof}
Assume $g, h \in \kappa$ are such that $\Phi(g)=\Phi(h)$. By definition, there exist $g_{1}, \dots , g_{n} \in \G_{x}^{x}$ such that $g=U_{g_{1}}U_{g_{2}}\dots U_{g_{n}} h$. Applying $\Phi$ to both sides we get that
\begin{equation*}
\Phi(g)=\Phi(g_{1})\dots \Phi(g_{n})\Phi(h).
\end{equation*}
Since $\Phi(g)=\Phi(h)$ and $\Phi$ is injective on $\G_{x}^{x}$, this implies that $g_{1}\dots g_{n} = x$. By our assumptions we then have that $U_{g_{1}}U_{g_{2}}\dots U_{g_{n}}\subset \Gu$, and hence $g=h$.
\end{proof}

\begin{proof}[Proof of Theorem~\ref{tmred}]
Fix a nonzero function $h\in C_c(\Gxx)$. Let $\{g_{1}, \dots , g_{n}\} \subset\G_{x}^{x}$ be the support of~$h$. Choose an open set $V\subset\Gu$ such that $x\in V\subset\bigcap^n_{i=1}r(U_{g_i})$, the closure of $W_i:=r^{-1}(V)\cap U_{g_i}$ is compact and contained in $U_{g_i}$ for each $i$, and such that $\Phi(W_{i})=\Phi(g_{i})$ for each $i$, which in particular implies that the sets $W_1,\dots,W_n$ are disjoint.
Next, choose functions $f_{i}\in C_{c}(U_{g_i})$ such that $f_{i}(g)=h(g_{i})$ for all $g\in W_i$. Finally, choose a function $q\in C_{c}(\Gu )$ with $\supp q \subset V$ such that $0\leq q \leq 1$ and $q(x)=1$. We now define
\begin{equation*}
f:=\sum_{i=1}^{n} q*f_{i} \in C_{c}(\G) .
\end{equation*}
Then $\eta_x(f)=h$, and in order to prove the theorem it suffices to show that $\|\rho_y(f)\|\le\|h\|_r$ for all $y\in\Gu$.

By Lemma~\ref{ldec} it is then enough to show that $\|\rho_y(f)|_{H_\kappa}\|\le\|h\|_r$ for all $y\in\Gu$ and $\kappa\in K_y=\G_y/{\sim_x}$. This can be reformulated as follows. By identifying $\Gxx$ with the subgroup $\Phi(\Gxx)$ of $\Gamma$, we can extend the function $h$ by zero to a function $\tilde h$ on $\Gamma$, so that $\tilde h(\Phi(g))=h(g)$ for $g\in\Gxx$ and $\tilde h=0$ on $\Gamma\setminus\Phi(\Gxx)$. Denote by $\lambda_\Gamma$ the regular representation of $\Gamma$. Since $\lambda_\Gamma\circ\Phi|_{\Gxx}$ decomposes into a direct sum of copies of the regular representation of $\Gxx$, we have $\|h\|_r=\|\lambda_\Gamma(\tilde h)\|$. Therefore we need to show that
\begin{equation}\label{eq:rhoz-r}
\|\rho_y(f)|_{H_\kappa}\|\le\|\lambda_\Gamma(\tilde h)\|.
\end{equation}

Let us show first that
\begin{equation}\label{eq:f-h}
f(gk^{-1})= q(r(g))\tilde h (\Phi(gk^{-1})  ) \quad\text{for all}\quad g,k\in\kappa.
\end{equation}
If $gk^{-1}\in\bigcup^n_{i=1} W_i$, then this follows by the definition of $f$ and $\tilde h$. If $gk^{-1}\in (\bigcup^n_{i=1} U_{g_i})\setminus (\bigcup^n_{i=1} W_i)$, then both sides of~\eqref{eq:f-h} are zero, again by the definition of $f$ and since $q(r(g))=0$. Assume now that $gk^{-1}\notin\bigcup^n_{i=1} U_{g_i}$.
Then $f(gk^{-1})=0$, and we are done if $r(g)\notin V$. So assume that $r(g)\in V$. If $\Phi(gk^{-1})\notin \{\Phi(g_{i})\}_{i=1}^{n}$ then the right-hand side of~\eqref{eq:f-h} is zero, so let us assume that $\Phi(gk^{-1})=\Phi(g_{l})$ for some $l\in \{1, \dots, n\}$. Since $g,k \in \kappa$, there are elements $s_{1}, \dots, s_{m}\in\Gxx$ such that $g=U_{s_{1}}\dots U_{s_{m}}k$. Hence $g = r_{1} \dots r_{m} k$ for uniquely defined elements $r_i\in U_{s_i}$. Since $r(gk^{-1})\in V\subset r(U_{g_l})$, we have
\begin{equation*}
\emptyset\neq U_{g_{l}}^{-1}gk^{-1} = U_{g_{l}}^{-1}r_{1} \dots r_{m} \subset U_{g_{l}}^{-1}U_{s_{1}}\dots U_{s_{m}}.
\end{equation*}
But since $\Phi(s_{1} \dots s_{m})=\Phi(gk^{-1})=\Phi(g_{l})$, the last set is contained in $\Gu$ by assumption, hence $gk^{-1} \in U_{g_{l}}$, which is a contradiction. Thus \eqref{eq:f-h} is proved.

Now, by Lemma~\ref{lkappa} we have an isometry
$$
u\colon H_\kappa\to\ell^2(\Gamma),\quad u\delta_g:=\delta_{\Phi(g)}.
$$
In terms of this isometry, identity \eqref{eq:f-h} can be written as
$$
(\rho_y(f)\delta_k,\delta_g)=q(r(g))(\lambda_\Gamma(\tilde h)u\delta_k,u\delta_g)\quad\text{for all}\quad g,k\in\kappa.
$$
In other words,
$$
\rho_y(f)|_{H_\kappa}=u^*m_{\tilde q}\lambda_\Gamma(\tilde h)u,
$$
where $m_{\tilde q}\colon\ell^2(\Gamma)\to\ell^2(\Gamma)$ is the operator of multiplication by the function $\tilde q\colon\Gamma\to[0,1]$ defined by $\tilde q(\Phi(g)):=q(r(g))$ for $g\in\kappa$ and $\tilde q:=0$ on $\Gamma\setminus\Phi(\kappa)$. This clearly implies~\eqref{eq:rhoz-r}.
\end{proof}

\begin{cor}
Let $\G$ be the \'etale groupoid associated with a partial action of a discrete group~$\Gamma$ on a locally compact Hausdorff space $X$. Then, for every $x\in\Gu=X$, we have $\lVert\cdot\rVert_e=\lVert\cdot\rVert_r$ on~$\C\Gxx$.
\end{cor}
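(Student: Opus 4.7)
The plan is to apply Theorem~\ref{tmred} after exhibiting an explicit family of bisections $\{U_g\}_{g\in\Gxx}$ of $\G$ satisfying conditions (1)--(3). Recall from Example~\ref{ex:partial} that $\G=\{(y,\gamma,z)\in X\times\Gamma\times X:z\in X_{\gamma^{-1}},\,y=\gamma z\}$ with product \eqref{eq:product} and grading $\Phi(y,\gamma,z)=\gamma$, and that $\Phi$ is already known to be injective on $\Gxx$. For $g=(x,\gamma,x)\in\Gxx$ I will set
\[
U_g:=\{(\gamma z,\gamma,z):z\in X_{\gamma^{-1}}\}.
\]
A quick check shows that $U_g$ is an open bisection: its range and source maps are the restrictions of the projections onto the first and third coordinates, with images $X_\gamma$ and $X_{\gamma^{-1}}$ respectively, and the map $z\mapsto(\gamma z,\gamma,z)$ is a homeomorphism from $X_{\gamma^{-1}}$ onto $U_g$ because $\gamma\colon X_{\gamma^{-1}}\to X_\gamma$ is.

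It remains to verify the three axioms. For (1): $g=(x,\gamma,x)$ lies in $U_g$ because $x\in X_{\gamma^{-1}}$ (as $\gamma x=x$ is assumed defined), and clearly $\Phi(U_g)=\{\gamma\}$. For (2): if $g=x=(x,e,x)$, then $X_{e^{-1}}=X$, so $U_x=\{(z,e,z):z\in X\}=\Gu$; and the inversion in $\G$ sends $(\gamma z,\gamma,z)$ to $(z,\gamma^{-1},\gamma z)$, so reindexing $w=\gamma z\in X_\gamma$ gives $U_g^{-1}=\{(\gamma^{-1}w,\gamma^{-1},w):w\in X_\gamma\}=U_{g^{-1}}$.

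The main work is condition (3). Suppose $g_i=(x,\gamma_i,x)\in\Gxx$ for $i=1,\dots,n$ with $g_1\cdots g_n=x$; this forces $\gamma_1\cdots\gamma_n=e$ in $\Gamma$. An element of $U_{g_1}U_{g_2}\cdots U_{g_n}$ is obtained by iterated composition in $\G$, so starting from some $z\in X_{\gamma_n^{-1}}$ we require successively that $\gamma_n z\in X_{\gamma_{n-1}^{-1}}$, $\gamma_{n-1}\gamma_n z\in X_{\gamma_{n-2}^{-1}}$, and so on, and the resulting element is $(\gamma_1(\gamma_2(\cdots(\gamma_n z)\cdots)),\gamma_1\cdots\gamma_n,z)$. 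Applying the axiom $(\gamma_1\gamma_2)z=\gamma_1(\gamma_2 z)$ of the partial action repeatedly (which is legitimate precisely because the iterated application on the right is defined), the first coordinate equals $(\gamma_1\cdots\gamma_n)z=ez=z$. Hence $U_{g_1}\cdots U_{g_n}\subset\{(z,e,z):z\in X\}=\Gu$.

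With all three conditions of Theorem~\ref{tmred} verified, the theorem yields $\lVert\cdot\rVert_e=\lVert\cdot\rVert_r$ on $\C\Gxx$, as required. No separate step looks genuinely hard; the only one needing care is (3), where one must make sure that the partial action axiom is invoked only in the direction in which it is stated (that the left-hand side is defined and equal to a right-hand side already known to be defined).
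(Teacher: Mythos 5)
Your proof is correct and matches the paper's own argument exactly: the paper also invokes Theorem~\ref{tmred} with the bisections $U_g=\{(\gamma z,\gamma,z):z\in X_{\gamma^{-1}}\}$, merely asserting that conditions (1)--(3) hold. Your explicit verification of these conditions, including the careful use of the partial-action axiom in condition (3), is accurate and fills in exactly the routine checks the paper leaves to the reader.
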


\bp
Using the notation from Example~\ref{ex:partial}, for every $g=(x,\gamma,x)\in\Gxx$ consider the bisection
$$
U_g:=\{(\gamma y,\gamma,y): y\in X_{\gamma^{-1}}\}.
$$
Then the assumptions of Theorem~\ref{tmred} are satisfied, so we get the result.
\ep

Note that if the assumptions of Theorem~\ref{thm:e=r} or~\ref{tmred} are satisfied for some $x\in\Gu$, then they are also satisfied for the points in the orbit of $x$. More generally, we have the following simple observation.

\begin{prop}\label{prop:orbit}
Assume $\G$ is a locally compact \'etale groupoid and $x\in\Gu$ is a point such that $\lVert\cdot\rVert_e=\lVert\cdot\rVert_r$ on $\C\Gxx$. Then $\lVert\cdot\rVert_e=\lVert\cdot\rVert_r$ on $\C\G^y_y$ for all points $y\in\Gu$ in the $\G$-orbit of $x$.
\end{prop}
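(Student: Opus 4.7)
The plan is to exhibit, for any $y$ in the $\G$-orbit of $x$, an explicit $*$-isomorphism $\C\Gxx\to\C\G^y_y$ that is simultaneously isometric with respect to $\lVert\cdot\rVert_r$ and $\lVert\cdot\rVert_e$; the conclusion is then immediate. Pick $g\in\G^y_x$ and let $T_g\colon\Gxx\to\G^y_y$, $h\mapsto ghg^{-1}$, be the conjugation isomorphism, extended linearly to a $*$-isomorphism $T_g\colon\C\Gxx\to\C\G^y_y$.

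For the reduced norm I would invoke the classical fact that the unitary $\ell^2(\Gxx)\to\ell^2(\G^y_y)$, $\delta_h\mapsto\delta_{ghg^{-1}}$, intertwines $\lambda_{\Gxx}$ with $\lambda_{\G^y_y}\circ T_g$; hence $T_g$ extends to an isometric $*$-isomorphism of the reduced group C$^*$-algebras.

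For $\lVert\cdot\rVert_e$ the key input is the characterization in Proposition~\ref{prop:e-charact}: a representation $\rho$ of $C^*(\Gxx)$ factors through $C^*_e(\Gxx)$ if and only if $\Ind\rho$ factors through $C^*_r(\G)$. The main step is to establish a unitary equivalence $\Ind\rho\cong\Ind(\rho\circ T_g^{-1})$, implemented by
$$
W\colon L_x\to L_y,\qquad (W\xi)(k):=\xi(kg)\quad\text{for}\ k\in\G_y,
$$
where $L_x$ and $L_y$ are the representation spaces for $\Ind\rho$ and $\Ind(\rho\circ T_g^{-1})$, respectively. Two short verifications are required: (i) $W$ is well-defined and unitary, using that right multiplication by $g$ is a bijection $\G_y\to\G_x$ which descends to a bijection $\G_y/\G^y_y\to\G_x/\Gxx$, and that the equivariance condition $\xi(kh)=\rho(u_h)^*\xi(k)$ is transported correctly under $h\leftrightarrow ghg^{-1}$; (ii) $W$ intertwines the actions of $C_c(\G)$, which follows by a direct computation from the formula $((\Ind\rho)(f)\xi)(k)=\sum_{h}f(h)\xi(h^{-1}k)$ and the identity $r(kg)=r(k)$ for $k\in\G_y$.

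Once the unitary equivalence is in place, $T_g$ transports the class $\mathcal S_x$ of Definition~\ref{def} onto~$\mathcal S_y$, so by Proposition~\ref{prop:e-charact} it extends to an isometric $*$-isomorphism $C^*_e(\Gxx)\to C^*_e(\G^y_y)$. Combined with the reduced-norm step, the equality $\lVert\cdot\rVert_e=\lVert\cdot\rVert_r$ on $\C\Gxx$ transfers to $\C\G^y_y$. I do not expect a serious obstacle: the entire argument rests on the routine verification that $W$ is a unitary intertwiner, which the formulas essentially dictate.
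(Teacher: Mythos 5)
Your proposal is correct and is essentially the paper's own argument: the paper also takes the conjugation isomorphism $\pi\colon\Gxx\to\G^y_y$ by an element $g\in\G^y_x$, notes that $\rho\mapsto\rho\circ\pi$ is a bijection between $\mathcal S_y$ and $\mathcal S_x$ because $\Ind(\rho\circ\pi)\sim\Ind\rho$, and concludes that $\pi$ extends to isometric isomorphisms at both the $\lVert\cdot\rVert_e$ and $\lVert\cdot\rVert_r$ levels. Your explicit unitary $(W\xi)(k)=\xi(kg)$ merely spells out the equivalence $\Ind\rho\sim\Ind(\rho\circ T_g^{-1})$ that the paper asserts without proof, and the verification goes through exactly as you indicate.
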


\bp
If $g\in\G^y_x$, then the conjugation by $g$ defines an isomorphism $\pi\colon\Gxx\to\G^y_y$. The map $\rho\mapsto\rho\circ\pi$ defines a bijection between the sets $\mathcal S_y$ and $\mathcal S_x$ from Definition~\ref{def}, since $\Ind(\rho\circ\pi)\sim\Ind\rho$. Hence~$\pi$ extends to an isomorphism $C^*_e(\Gxx)\cong C^*_e(\G^y_y)$. Since it also extends to an isomorphism $C^*_r(\Gxx)\cong C^*_r(\G^y_y)$, this proves the result.
\ep

Following \cite{KhSk} we say that locally compact \'etale groupoids $\mathcal{H}$ and $\G$ are Morita equivalent if there exists a locally compact \'etale groupoid $\TT$ with unit space equal to the topological disjoint union $\TT^{(0)} = \Gu \sqcup \mathcal{H}^{(0)}$ such that $\mathcal{T}$ restricted to $\Gu$ and $\mathcal{H}^{(0)}$ is, resp., $\G$ and $\mathcal{H}$, and such that both $\Gu$ and $\mathcal{H}^{(0)}$ meet all $\mathcal{T}$-orbits.

\begin{cor}
If $\G$ and $\mathcal H$ are Morita equivalent locally compact \'etale groupoids and $\lVert\cdot\rVert_e=\lVert\cdot\rVert_r$ on~$\C\Gxx$ for all $x\in \Gu$, then the groupoid $\mathcal H$ has the same property.
\end{cor}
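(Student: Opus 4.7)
The plan is to work with the linking groupoid $\TT$ supplied by the definition of Morita equivalence, and transfer the property $\lVert\cdot\rVert_e=\lVert\cdot\rVert_r$ through $\TT$. The key technical step is the following lemma: if $\TT$ is any locally compact \'etale groupoid and $Y\subset\TT^{(0)}$ is an open subset, then for the reduction $\G:=\TT|_Y=r^{-1}(Y)\cap s^{-1}(Y)$ and any $x\in Y$ one has $\TT^{x}_{x}=\Gxx$, and the norms $\lVert\cdot\rVert_r$ and $\lVert\cdot\rVert_e$ on $\C\Gxx$ agree regardless of whether they are computed relative to $\G$ or relative to $\TT$.

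To establish the $r$-norm part of this lemma I would check that for $f\in C_c(\G)\subset C_c(\TT)$ and any $z\in\TT^{(0)}$, the operator $\rho^{\TT}_{z}(f)$ vanishes on $\ell^2(\{g\in\TT_{z}:r(g)\notin Y\})$, while on $\ell^2(\{g\in\TT_{z}:r(g)\in Y\})$ it is unitarily equivalent, via any right translation $g\mapsto gg_0^{-1}$ by a chosen $g_0$ with $r(g_0)=y_0\in Y$, to $\rho^{\G}_{y_0}(f)$; taking suprema yields $\lVert f\rVert_{r,\TT}=\lVert f\rVert_{r,\G}$. The $e$-norm invariance is then immediate from Theorem~\ref{thmnorm}: choose the neighbourhood base $\V$ of $x$ to consist of open sets contained in $Y$ and the functions $q_V$ in $C_c(Y)$, so that for any $f\in C_c(\TT)$ with $\eta_x(f)=h$ the elements $q_V*f*q_V$ automatically lie in $C_c(\G)$, and both instances of the limit formula produce the same value.

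With the lemma in hand, the corollary unfolds in three quick steps. Applying the lemma to $Y=\Gu$ converts the hypothesis into $\lVert\cdot\rVert^{\TT}_{e}=\lVert\cdot\rVert_r$ on $\C\TT^{x}_{x}$ for every $x\in\Gu$. Since $\Gu$ meets every $\TT$-orbit by the definition of Morita equivalence, Proposition~\ref{prop:orbit} upgrades this to all $x\in\TT^{(0)}$. Applying the lemma a second time, now with $Y=\mathcal H^{(0)}$, transports the equality back to the isotropy of $\mathcal H$, giving $\lVert\cdot\rVert_e=\lVert\cdot\rVert_r$ on $\C\mathcal H^{y}_{y}$ for all $y\in\mathcal H^{(0)}$. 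The main obstacle is the lemma itself, and specifically the $r$-norm invariance under reduction to an open subset, which requires the orbit/fibre bookkeeping sketched above; once that is settled, the $e$-norm part and the orbit extension are essentially formal.
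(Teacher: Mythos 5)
Your proposal is correct and takes essentially the same route as the paper: the paper likewise embeds $C_{c}(\G)$ and $C_{c}(\mathcal{H})$ reduced-norm isometrically into $C_{c}(\TT)$ via the decomposition $\rho^\TT_x(f)=\rho^\G_x(f)\oplus 0$ and right-translation unitaries between fibres over the same $\TT$-orbit, then identifies the $e$-norms using Theorem~\ref{thmnorm} and concludes with Proposition~\ref{prop:orbit}, your reduction-to-an-open-subset lemma being just a mildly more general packaging of these steps. One hairline adjustment: to invoke the limit formula~\eqref{eq:e-norm-limit} inside $\G$ you should apply it to $\tilde f:=q_{V_0}*f*q_{V_0}\in C_c(\G)$ for a fixed $V_0$ rather than to $f\in C_c(\TT)$ itself (which need not lie in $C_c(\G)$), after which the two limits visibly coincide.
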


\bp
Let $\TT$ be a groupoid defining a Morita equivalence between $\G$ and $\mathcal H$. We can view every function $f\in C_c(\G)$ as an element of $C_c(\TT)$. For every $x\in\Gu$, we have $\rho^\TT_x(f)=\rho^\G_x(f)\oplus0$  with respect to the decomposition $\ell^2(\TT_x)=\ell^2(\G_x)\oplus\ell^2(\TT_x\setminus\G_x)$, where $\rho^\TT_x$ and $\rho^\G_x$ are the representations~\eqref{eq:rhox} defined for $\TT$ and $\G$, resp. We also have $\lVert \rho^\TT_{x}(f) \rVert= \lVert \rho^\TT_{y}(f) \rVert$ for all $x,y \in \TT^{(0)}$ belonging to the same $\TT$-orbit. This implies that the embeddings $C_{c}(\G) \hookrightarrow C_{c}(\TT)$ and $C_{c}(\mathcal{H}) \hookrightarrow C_{c}(\TT)$ are isometric with respect to the reduced norm. By an application of Theorem \ref{thmnorm} then the norms $\lVert\cdot\rVert_e$ agree on $\C\Gxx$ and $\C\TT_{x}^{x}$ for $x\in \Gu$ and on $\C\mathcal{H}_{y}^{y}$ and $\C\TT_{y}^{y}$ for $y\in \mathcal{H}^{(0)}$. Hence the corollary follows from Proposition~\ref{prop:orbit}.
\ep

As an application, using this corollary and Proposition \ref{prop:transformation-groupoids}, we see that if $\G$ is injectively graded, with grading $\Phi\colon\G\to\Gamma$ that is closed and transverse in the sense of~\cite{KhSk}*{Definition~1.6}, then $\lVert\cdot\rVert_e= \lVert\cdot\rVert_r$ on $\C\Gxx$ for all $x\in \Gu$, since by~\cite{KhSk}*{Theorem~1.8} the groupoid $\G$ is Morita equivalent to a transformation groupoid.

\begin{bibdiv}
\begin{biblist}

\bib{Ab}{article}{
   author={Abadie, Fernando},
   title={Enveloping actions and Takai duality for partial actions},
   journal={J. Funct. Anal.},
   volume={197},
   date={2003},
   number={1},
   pages={14--67},
   issn={0022-1236},
   review={\MR{1957674}},
   doi={10.1016/S0022-1236(02)00032-0},
}

\bib{Ab2}{article}{
   author={Abadie, Fernando},
   title={On partial actions and groupoids},
   journal={Proc. Amer. Math. Soc.},
   volume={132},
   date={2004},
   number={4},
   pages={1037--1047},
   issn={0002-9939},
   review={\MR{2045419}},
   doi={10.1090/S0002-9939-03-07300-3},
}

\bib{ADR}{book}{
   author={Anantharaman-Delaroche, C.},
   author={Renault, J.},
   title={Amenable groupoids},
   series={Monographies de L'Enseignement Math\'{e}matique [Monographs of
   L'Enseignement Math\'{e}matique]},
   volume={36},
   note={With a foreword by G. Skandalis and Appendix B by E. Germain},
   publisher={L'Enseignement Math\'{e}matique, Geneva},
   date={2000},
   pages={196},
   isbn={2-940264-01-5},
   review={\MR{1799683}},
}


\bib{C}{misc}{
      author={Christensen, Johannes},
       title={The structure of KMS weights on \'etale groupoid C$^*$-algebras},
         how={preprint},
        date={2020},
      eprint={\href{http://arxiv.org/abs/2005.01792}{\texttt{arXiv:2005.01792 [math.OA]}}},
}

\bib{D}{article}{
   author={Deaconu, Valentin},
   title={Groupoids associated with endomorphisms},
   journal={Trans. Amer. Math. Soc.},
   volume={347},
   date={1995},
   number={5},
   pages={1779--1786},
   issn={0002-9947},
   review={\MR{1233967}},
   doi={10.2307/2154972},
}

\bib{ER}{article}{
   author={Exel, R.},
   author={Renault, J.},
   title={Semigroups of local homeomorphisms and interaction groups},
   journal={Ergodic Theory Dynam. Systems},
   volume={27},
   date={2007},
   number={6},
   pages={1737--1771},
   issn={0143-3857},
   review={\MR{2371594}},
   doi={10.1017/S0143385707000193},
}

\bib{HL}{misc}{
      author={Hazrat, Roozbeh},
      author={Li, Huanhuan},
       title={Homology of \'etale groupoids, a graded approach},
         how={preprint},
        date={2018},
      eprint={\href{http://arxiv.org/abs/1806.03398v2}{\texttt{1806.03398v2 [math.KT]}}},
}

\bib{HLS}{article}{
   author={Higson, N.},
   author={Lafforgue, V.},
   author={Skandalis, G.},
   title={Counterexamples to the Baum-Connes conjecture},
   journal={Geom. Funct. Anal.},
   volume={12},
   date={2002},
   number={2},
   pages={330--354},
   issn={1016-443X},
   review={\MR{1911663}},
   doi={10.1007/s00039-002-8249-5},
}

\bib{KhSk}{article}{
   author={Khoshkam, Mahmood},
   author={Skandalis, Georges},
   title={Regular representation of groupoid $C$*-algebras and applications to inverse semigroups},
   journal={J. Reine Angew. Math.},
   volume={546},
   date={2002},
   pages={47--72},
   doi={10.1515/crll.2002.045},
}

\bib{KS}{article}{
   author={Kyed, David},
   author={So\l tan, Piotr M.},
   title={Property (T) and exotic quantum group norms},
   journal={J. Noncommut. Geom.},
   volume={6},
   date={2012},
   number={4},
   pages={773--800},
   issn={1661-6952},
   review={\MR{2990124}},
   doi={10.4171/JNCG/105},
}

\bib{N}{article}{
   author={Neshveyev, Sergey},
   title={KMS states on the $C^\ast$-algebras of non-principal groupoids},
   journal={J. Operator Theory},
   volume={70},
   date={2013},
   number={2},
   pages={513--530},
   issn={0379-4024},
   review={\MR{3138368}},
   doi={10.7900/jot.2011sep20.1915},
}

\bib{NS}{misc}{
      author={Neshveyev, Sergey},
      author={Stammeier, Nicolai},
       title={The groupoid approach to equilibrium states on right LCM semigroup C$^*$-algebras},
         how={preprint},
        date={2019},
      eprint={\href{http://arxiv.org/abs/1912.03141}{\texttt{arXiv:1912.03141 [math.OA]}}},
}

\bib{Rbook}{book}{
   author={Renault, Jean},
   title={A groupoid approach to $C^{\ast} $-algebras},
   series={Lecture Notes in Mathematics},
   volume={793},
   publisher={Springer, Berlin},
   date={1980},
   pages={ii+160},
   isbn={3-540-09977-8},
   review={\MR{584266}},
}

\bib{Ren}{article}{
   author={Renault, Jean},
   title={Repr\'{e}sentation des produits crois\'{e}s d'alg\`ebres de groupo\"{\i}des},
   language={French},
   journal={J. Operator Theory},
   volume={18},
   date={1987},
   number={1},
   pages={67--97},
   issn={0379-4024},
   review={\MR{912813}},
}

\bib{Tak}{book}{
   author={Takesaki, M.},
   title={Theory of operator algebras. I},
   series={Encyclopaedia of Mathematical Sciences},
   volume={124},
   note={Reprint of the first (1979) edition;
   Operator Algebras and Non-commutative Geometry, 5},
   publisher={Springer-Verlag, Berlin},
   date={2002},
   pages={xx+415},
   isbn={3-540-42248-X},
   review={\MR{1873025}},
}


\bib{W}{article}{
   author={Willett, Rufus},
   title={A non-amenable groupoid whose maximal and reduced $C^*$-algebras
   are the same},
   journal={M\"{u}nster J. Math.},
   volume={8},
   date={2015},
   number={1},
   pages={241--252},
   issn={1867-5778},
   review={\MR{3549528}},
   doi={10.17879/65219671638},
}

\end{biblist}
\end{bibdiv}

\bigskip

\end{document}